\documentclass[11pt]{amsart}

\usepackage{DKstyle}
\usepackage{bm}
\usepackage{float}
 \usepackage[active]{srcltx}

\def\scrY{{\mathcal Y}}
\newcommand{\col}{\: : \:}
\def\DI{\operatorname{\mathbf{DI}}}

\newcommand{\vertiii}[1]{{\left\vert\kern-0.25ex\left\vert\kern-0.25ex\left\vert #1
    \right\vert\kern-0.25ex\right\vert\kern-0.25ex\right\vert}}
 \makeatletter
\newcommand*{\rom}[1]{\expandafter\@slowromancap\romannumeral #1@}
\makeatother
\theoremstyle{plain}

\newcommand{\fg}{\mathfrak{g}}

\usepackage{caption}
\usepackage[labelfont=rm]{subcaption}

\usepackage[pagebackref=false, colorlinks]{hyperref}

\hypersetup{pdffitwindow=true,linkcolor=blue,citecolor=blue,urlcolor=blue,menucolor=blue}

\usepackage{comment}

\subjclass{}%
\keywords{}%

\date{\today}%
\thanks{  SY was supported by the
National Key R\&D Program of China No. 2024YFA1015100. 
AS was supported by the Knut and Alice Wallenberg Foundation and also by the Swedish Research Council Grant 2023-03411.
}

\dedicatory{}%
\commby{}%

\title{A zero-one law for improvements to Dirichlet's theorem in arbitrary dimension}
\author[Andreas Str\"ombergsson]{Andreas Str\"ombergsson}
\address{Andreas Str\"ombergsson, Department of Mathematics, Uppsala University, Box 480, SE-75106, Uppsala, Sweden
\newline \rule[0ex]{0ex}{0ex} \hspace{8pt}{\tt astrombe@math.uu.se}}

\author[Shucheng Yu]{Shucheng Yu}
\address{Shucheng Yu, School of Mathematical Sciences, University of Science and Technology of China (USTC), 230026, Hefei, China
\newline \rule[0ex]{0ex}{0ex} \hspace{8pt}{\tt yusc@ustc.edu.cn}}
\thanks{}
\begin{document}
\begin{abstract}
Let $\psi$ be a continuous decreasing function defined on all large positive real numbers. 
We say that a real
$m\times n$ matrix $A$ is $\psi$-Dirichlet if for every sufficiently large real number $t$ one can find $\bm{p} \in \Z^m$, $\bm{q} \in \Z^n\setminus\{\bm{0}\}$ satisfying $\|A\bm{q}-\bm{p}\|^m< \psi({t})$ and $\|\bm{q}\|^n<{t}$.
By removing a technical condition from a partial zero-one law proved in \cite{KleinbockStrombergssonYu2022}, we prove a zero-one law for the Lebesgue measure of the set of $\psi$-Dirichlet matrices provided that 
$\psi(t)<1/t$ and  
$t\psi(t)$ is increasing. 
In fact, we prove the zero-one law in a more general situation with the
monotonicity assumption on $t\psi(t)$ replaced by a weaker condition.
Our proof follows the dynamical approach in \cite{KleinbockStrombergssonYu2022} in reducing the question to a shrinking target problem in the space of lattices.
 The key new ingredient is a family of carefully 
  chosen subsets of the shrinking targets studied in \cite{KleinbockStrombergssonYu2022}, together with a short-range mixing estimate for the associated hitting events.
 Our method also works for the analogous weighted
problem where the relevant supremum norms are replaced by certain weighted quasi-norms.
\end{abstract}

\maketitle

\setcounter{tocdepth}{2} 
\tableofcontents
\section{Introduction}\label{introSEC}

Let $m,n$ be two positive integers and let $d=m+n$. The classical generalized Dirichlet's theorem states that: 
\begin{thm}
For any $A\in \mathrm{M}_{m,n}(\R)$ and $t>1$, there exists $(\bm{p},\bm{q})\in \Z^m\times (\Z^n\setminus\{\bm{0}\})$ satisfying the following system of inequalities:
 \begin{align}\label{equ:dirichletoriginal}
\|A\bm{q}-\bm{p}\|^m\leq \frac{1}{t}\quad \textrm{and}\quad \|\bm{q}\|^n<t.
\end{align}
Here $\|\cdot\|$ denotes the supremum norm on $\R^m$ and $\R^n$ respectively.
\end{thm}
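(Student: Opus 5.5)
We derive this from Minkowski's linear forms theorem, a consequence of Minkowski's convex body theorem, applied to the lattice $\Z^d$ and a suitable symmetric convex box in $\R^d=\R^m\times\R^n$. Fix $A$ and $t>1$. Consider the $d$ linear forms on $\R^d$ given by
\[
L_i(\bm{x},\bm{y})=(A\bm{y}-\bm{x})_i\ (1\le i\le m),\qquad L_{m+j}(\bm{x},\bm{y})=y_j\ (1\le j\le n),
\]
where $\bm{x}\in\R^m$ and $\bm{y}\in\R^n$. The associated matrix is block upper triangular, with blocks $-I_m$ and $I_n$ on the diagonal, so its determinant is $\pm1$. Impose the bounds $|L_i|\le t^{-1/m}$ for $i\le m$ and $|L_{m+j}|<t^{1/n}$ for $j\le n$. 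The product of the bounds is $t^{-1}\cdot t=1=|\det|$. Minkowski's linear forms theorem, in the version allowing one non-strict inequality, then gives a nonzero integer point $(\bm{p},\bm{q})$. This point satisfies $\|A\bm{q}-\bm{p}\|\le t^{-1/m}$ and $\|\bm{q}\|<t^{1/n}$, which is exactly \eqref{equ:dirichletoriginal}.

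For completeness, here is how the mixed strict/non-strict version follows from the convex body theorem. For $\epsilon>0$, the box $\{|L_i|\le t^{-1/m}+\epsilon,\ |L_{m+j}|\le t^{1/n}\}$ has volume greater than $2^d$. So it contains a nonzero lattice point. The lattice points in a fixed bounded region form a finite set. Letting $\epsilon\to0$, some nonzero point lies in all these boxes, so it satisfies $\|A\bm{q}-\bm{p}\|\le t^{-1/m}$ and $\|\bm{q}\|\le t^{1/n}$.

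The main obstacle is getting the strict inequality $\|\bm{q}\|<t^{1/n}$ rather than $\le$. One option is to shrink the $\bm{q}$-bound slightly and enlarge the other. Instead, I would argue as follows. If every point found has $\|\bm{q}\|=t^{1/n}$, replace $t$ by $t'=t-\delta$ for small $\delta>0$, still with $t'>1$. We then get $(\bm{p},\bm{q})$ with $\|\bm{q}\|\le t'^{1/n}<t^{1/n}$ and $\|A\bm{q}-\bm{p}\|^m\le 1/t'$. As $\delta\to0$, these points stay in a bounded set, so some fixed $(\bm{p},\bm{q})$ recurs along a sequence $\delta_k\to0$. That point satisfies $\|\bm{q}\|\le t'^{1/n}$ for some $t'<t$, hence $\|\bm{q}\|<t^{1/n}$. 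It also satisfies $\|A\bm{q}-\bm{p}\|^m\le 1/t'_k$ for all $k$, hence $\le 1/t$ in the limit.

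It remains to rule out $\bm{q}=\bm{0}$. In that case $\bm{p}\ne\bm{0}$ is an integer vector with $\|\bm{p}\|\le t^{-1/m}<1$, which is impossible. So $\bm{q}\ne\bm{0}$, and the theorem follows.
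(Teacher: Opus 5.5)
The paper gives no proof of this statement; it quotes it as the classical Dirichlet theorem. The only related remark in the paper is that $\Delta(\Lambda)\geq 0$ follows from Minkowski's convex body theorem. Applied to $g_s\Lambda_A$ with the unweighted flow ($\alpha_i=1/m$, $\beta_j=1/n$) and $t=e^s$, that remark is exactly the closed version $\|A\bm{q}-\bm{p}\|^m\le 1/t$, $\|\bm{q}\|^n\le t$. Your argument is correct and follows this same geometry-of-numbers route.

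Your first paragraph is already a complete proof. The standard linear forms theorem requires $c_1\cdots c_d\geq|\det|$ and gives one inequality with $\le$ and the rest with $<$. It therefore yields strict inequalities for $m-1$ of the first $m$ forms, which is more than you need. Your last paragraph then correctly rules out $\bm{q}=\bm{0}$ using $t>1$.

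Two small comments on the self-contained part. First, your ``for completeness'' paragraph does not derive the mixed strict/non-strict version as announced, only the fully closed one. It is your third paragraph that supplies the strict inequality $\|\bm{q}\|<t^{1/n}$. That step is valid as written: you apply the closed result with $t'=t-\delta$, use finiteness of integer points in a bounded region to get a single point recurring along $\delta_k\to 0$, and pass to the limit in the non-strict bound. Second, the case distinction ``if every point found has $\|\bm{q}\|=t^{1/n}$'' is unnecessary, since that argument works unconditionally.
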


In view of Dirichlet's theorem, a natural question to ask is whether one can improve \eqref{equ:dirichletoriginal} by replacing $1/t$ by a smaller function, that is, consider the following system of inequalities:
\begin{align}\label{equ:dirichlet}
\|A\bm{q}-\bm{p}\|^m<\psi(t)\quad \textrm{and}\quad \|\bm{q}\|^n<t
\end{align}
where $\psi$ is a positive, continuous, decreasing function which decays to zero at infinity. In this paper, we continue the study of \cite{KleinbockStrombergssonYu2022} on the \textit{metric} aspect of the \textit{uniform approximation} problem on improving Dirichlet's theorem, which aims to understand the Lebesgue measure of the set of matrices $A\in \mathrm{M}_{m\times n}(\R)$ for which the  system of inequalities \eqref{equ:dirichlet} is sovable in $(\bm{p},\bm{q})\in \Z^m\times (\Z^n\setminus \{\bm{0}\})$ for \textit{all sufficiently large $t$}.  
We refer the reader to the introduction of \cite{KleinbockStrombergssonYu2022}
for a more extensive review on related literature.

As in \cite{KleinbockStrombergssonYu2022} we consider the slightly more general \textit{weighted} problem replacing the  supremum norms in \eqref{equ:dirichlet}  by certain weighted quasi-norms. 
Let $\bm{\alpha}\in \R^m$ and $\bm{\beta}\in \R^n$ be two \textit{weight vectors}, that is 
\begin{align*}
\bm{\alpha}=(\alpha_1,\ldots, \alpha_m)\in (\R_{>0})^m\quad\textrm{and}\quad \bm{\beta}=(\beta_1,\ldots, \beta_n)\in (\R_{>0})^n
\end{align*}
with $\sum_i\alpha_i=\sum_j\beta_j=1$. Following  \cite{KleinbockStrombergssonYu2022}, we say that $A\in \mathrm{M}_{m,n}(\R)$ is \textit{$\psi_{\bm{\alpha},\bm{\beta}}$-Dirichlet} if the system of inequalities 
\begin{align}\label{equ:dirichletwei}
\|A\bm{q}-\bm{p}\|_{\bm{\alpha}}<\psi(t)\quad \textrm{and}\quad \|\bm{q}\|_{\bm{\beta}}<t
\end{align}
has solutions in $(\bm{p},\bm{q})\in \Z^m\times (\Z^n\setminus \{\bm{0}\})$ \textit{for all sufficiently large $t$}. Here %
\begin{align*}
\|\bm{x}\|_{\bm{\alpha}}:=\max\left\{|x_i|^{1/\alpha_i}\col 1\leq i\leq m\right\}\quad\textrm{and}\quad \|\bm{y}\|_{\bm{\beta}}:=\max\left\{|y_j|^{1/\beta_j}\col 1\leq j\leq n\right\}
\end{align*}
are the two quasi-norms associated with $\bm{\alpha}$ and $\bm{\beta}$ respectively. One easily sees that $A\in \mathrm{M}_{m,n}(\R)$ is $\psi_{\bm{\alpha},\bm{\beta}}$-Dirichlet if and only if $A+A'$ is $\psi_{\bm{\alpha},\bm{\beta}}$-Dirichlet for any $A'\in \mathrm{M}_{m,n}(\Z)$. Thus with slight abuse of notation,  
we may denote by $\DI_{\bm{\alpha},\bm{\beta}}(\psi)\subset \mathrm{M}_{m,n}(\R/\Z)$ the set of $\psi_{\bm{\alpha},\bm{\beta}}$-Dirichlet matrices. One is naturally interested in determining the Lebesgue measure of $\DI_{\bm{\alpha},\bm{\beta}}(\psi)$ for different approximating functions. A particular question of interest is whether $\mathrm{Leb}(\DI_{\bm{\alpha},\bm{\beta}}(\psi))$ satisfies a \textit{zero-one law} depending on
some criterion on the approximating function $\psi$.
For the case when $m=n=1$, such a zero-one law was established by Kleinbock and Wadleigh \cite[Theorem 1.8]{KleinbockWadleigh2018} using  continued fractions. However, this approach is not applicable for higher dimensions. 
For general $m$ and $n$, this question was studied in \cite{KleinbockStrombergssonYu2022} using homogeneous dynamics via the classical Dani correspondence. In particular, the main result of \cite{KleinbockStrombergssonYu2022} is the following 
 partial zero-one law on $\mathrm{Leb}(\DI_{\bm{\alpha},\bm{\beta}}(\psi))$:

\begin{thm}[\text{\cite[Theorem 1.2]{KleinbockStrombergssonYu2022}}]\label{thm:improvedirichleconweiold}
Fix $m,n\in \N$ and two weight vectors $\bm{\alpha}\in \R^m$ and $\bm{\beta}\in \R^n$. Let $d=m+n$, and let 
\begin{align}\label{equ:tpars}
\varkappa_d=\frac{d^2+d-4}{2}\quad \textrm{and}\quad \lambda_d=\frac{d(d-1)}{2}.
\end{align}
Let $t_0>0$ and let $\psi: [t_0,\infty)\to {(0,\infty)}$ be a continuous, decreasing function such that
\begin{align}\label{equ:con1psi}
\textrm{the function $t\mapsto t\psi(t)$ is increasing}
\end{align} 
and
\begin{align}\label{equ:con2psi}
\psi(t)<1/t\quad \textrm{for all $t\geq t_0$}.
\end{align} 
Let $F_{\psi}(t):=1-t\psi(t)$. If the series 
\begin{align}\label{equ:crticalseriesconj}
\sum_{k\geq t_0}k^{-1}F_{\psi}(k)^{\varkappa_d}\log^{\lambda_d}\left(\tfrac{1}{F_{\psi}(k)}\right)
\end{align} 
converges, then $\DI_{\bm{\alpha},\bm{\beta}}(\psi)$ is of full Lebesgue measure. Conversely, if the series \eqref{equ:crticalseriesconj} diverges, and %
\begin{align}\label{equ:condipsi}
\liminf_{t_1\to\infty}\frac{\sum_{t_0\leq k\leq t_1}k^{-1}F_{\psi}(k)^{\varkappa_d}\log^{\lambda_d+1}\left(\frac{1}{F_{\psi}(k)}\right)}
{\left(\sum_{t_0\leq k\leq t_1}k^{-1}F_{\psi}(k)^{\varkappa_d}\log^{\lambda_d}\left(\frac{1}{F_{\psi}(k)}\right)\right)^2}=0,
\end{align} 
then $\DI_{\bm{\alpha},\bm{\beta}}(\psi)$ is of zero Lebesgue measure. %
\end{thm}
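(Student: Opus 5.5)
Via the Dani correspondence I will recast $\psi_{\bm{\alpha},\bm{\beta}}$-Dirichlet as a shrinking target problem on $X_d=\mathrm{SL}_d(\R)/\mathrm{SL}_d(\Z)$. Let $g_s=\mathrm{diag}(e^{\alpha_1 s},\dots,e^{\alpha_m s},e^{-\beta_1 s},\dots,e^{-\beta_n s})$ and $u_A=\begin{pmatrix} I_m & A\\ 0& I_n\end{pmatrix}$, with lattice $\Lambda_A=u_A\Z^d$. After the time change $t=e^{s}$ (suitably rescaled), solvability of \eqref{equ:dirichletwei} at time $t$ becomes the statement that $g_s\Lambda_A$ has a nonzero vector in the box
\[
\{\|\bm x\|_{\bm\alpha}<r(s),\ \|\bm y\|_{\bm\beta}<r(s)^{-1}\}.
\]
Here $r(s)$ is a parameter with $r(s)\to 1$, and it is related to $F_\psi(t)=1-t\psi(t)$. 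The condition \eqref{equ:con2psi} guarantees that this box has volume $<2^d$, so the set of lattices missing the box is a nonempty target $\Delta_{r}$ near the critical locus of lattices that miss the unit box. Thus $A\notin \DI_{\bm{\alpha},\bm{\beta}}(\psi)$ exactly when $g_s\Lambda_A\in\Delta_{r(s)}$ for an unbounded set of $s$.

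\textbf{Step 1: measure of the targets.} I will compute $\mu(\Delta_r)\asymp (1-r)^{\varkappa_d}\log^{\lambda_d}\tfrac1{1-r}$ as $r\to1$. The plan is to parametrize lattices near the critical set by the $d$ (up to symmetry) lattice points lying on the faces of the box. The Haar measure of a neighbourhood of this critical configuration space then gives the power $\varkappa_d$, and integrating over the degenerate directions, where points may slide along faces, gives the logarithmic factor with exponent $\lambda_d$. This is the heaviest geometric computation.

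\textbf{Step 2: discretize.} Monotonicity \eqref{equ:con1psi} lets me replace continuous time by $s=\log k$ at integers $k$. For this I need thickened/thinned targets $\Delta^\pm$ with comparable measure, so that the event "hit for unbounded $s$" is sandwiched between the events "hit at infinitely many $k$" for slightly perturbed sequences. The measure of the $k$-th target is then comparable to the $k$-th term of the series \eqref{equ:crticalseriesconj} divided by $k^{-1}$. The extra $k^{-1}$ arises because consecutive integer times are $1/k$ apart in $s$: a hit on an interval of length $1/k$ requires entering a thickened target whose measure carries an extra factor coming from the flow direction.

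\textbf{Step 3: convergence case.} Apply Borel--Cantelli along the expanding horospherical leaf $\{u_A\}$, using equidistribution of $g_s$-translates of pieces of that leaf. The expected number of hits is bounded by the series, so almost every $A$ hits only finitely often, and hence $A$ is $\psi$-Dirichlet.

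\textbf{Step 4: divergence case.} Use the quantitative (Kochen--Stone / Erd\H{o}s--R\'enyi) Borel--Cantelli lemma. This needs quasi-independence: the correlation of the hitting events at times $k<l$ should be bounded by $\mu_k\mu_l(1+O(e^{-\delta(\log l-\log k)}))$ for well-separated times. That bound follows from effective exponential mixing of $g_s$ on $X_d$ together with smooth approximations of the targets. For short-range pairs (with $\log l-\log k$ bounded, i.e.\ $l\le Ck$) I can only bound the correlation by $\mu_k$ times a factor of order $\log\frac1{F}$, which counts the short-range times. Summing this gives exactly the numerator in \eqref{equ:condipsi}. The hypothesis \eqref{equ:condipsi} therefore makes the variance negligible relative to the square of the expectation along a subsequence, so the limsup set has positive measure. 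Ergodicity, via invariance under integer translations and a standard zero-one argument for the limsup set, upgrades this to full measure, and therefore $\mathrm{Leb}(\DI)=0$.

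The main obstacle I expect is the short-range correlation estimate in Step 4. Mixing gives nothing there, and one must show directly that a lattice near the critical locus stays in the target for only about $\log(1/F)$ consecutive time steps. This seems to be exactly where the extra $\log$ factor in \eqref{equ:condipsi} comes from.
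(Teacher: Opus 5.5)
Your time discretization is wrong, and it breaks both Step~3 and the counting in Step~4. Sampling at $s=\log k$, $k\in\N$, puts consecutive sample times about $1/k$ apart. A trajectory that enters the target, however, stays there for time $\asymp F$. To see this, compare $\mu_d(\Delta^{-1}[0,r])\asymp r^{\varkappa_d+1}\log^{\lambda_d}(1/r)$ (Theorem~\ref{KSYTHM1p3}) with $\mu_d(\widetilde{\Delta}_r)\asymp r^{\varkappa_d}\log^{\lambda_d}(1/r)$ for the unit-time thickening. Since $\frac{(d-1)(d+2)}{2}=\varkappa_d+1$, your Step~1 exponent is also off by one for the unthickened target.

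Your $k$-th event contains $\{A: g_{\log k}\Lambda_A\in\Delta^{-1}[0,cF(k)]\}$, so its measure is $\gg F(k)^{\varkappa_d+1}\log^{\lambda_d}(1/F(k))$. In the only nontrivial regime, $F(k)\gg 1/k$, this is far larger than $k^{-1}F(k)^{\varkappa_d}\log^{\lambda_d}(1/F(k))$: each visit is counted about $kF(k)$ times. For example, if $t\psi(t)=1-(\log t)^{-2/\varkappa_d}$, the series \eqref{equ:crticalseriesconj} converges but $\sum_k F(k)^{\varkappa_d+1}\log^{\lambda_d}(1/F(k))$ diverges. So Borel--Cantelli applied to your events proves nothing.

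In Step~4 the same indexing makes the short range far too long. The mixing error is additive, of size $e^{-\delta(\log l-\log k)}\|\phi\|_{C^\ell}^2$ with $\|\phi\|_{C^\ell}\ll F^{-\ell}$. It is negligible only when $\log l-\log k\gg\log(1/F)$, not when it is merely bounded. That leaves each $k$ with on the order of $kF^{-C}$ short-range partners rather than $\log(1/F)$, and the trivial bound then does not produce the numerator of \eqref{equ:condipsi}.

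The repair is to sample at integer $s$ (so $t\approx e^k$), using the unit-time thickenings $\widetilde{\Delta}_r=\bigcup_{0\le s<1}g_{-s}\Delta^{-1}[0,r]$. Use the monotonicity of $t\psi(t)$ to compare $r(s)$ on $[k,k+1)$ with $r(k)$ and $r(k+1)$, as in Lemma~\ref{lem:dyred}, and use condensation, as in \eqref{equ:equvserdiv}, to pass to \eqref{equ:crticalseriesconj}. Short range then means $j-i\ll\log(1/F)$ integer steps. The trivial bound gives $\sum_j b_j\log(1/F(e^j))$, which after condensation is the numerator of \eqref{equ:condipsi}. Your Step~4 then becomes essentially the original argument of \cite{KleinbockStrombergssonYu2022}; you also need a lower bound such as $r(k)\ge k^{-\gamma_d}$ (cf.\ Lemma~\ref{lem:redu}) to keep $\|\phi\|_{C^\ell}$ polynomial in $k$.

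Two smaller corrections. No ergodicity upgrade is needed: $\liminf Q_{k_1,k_2}/(\sum b_i)^2=0$ already gives full measure via the divergence Borel--Cantelli lemma, and invariance under integer translations is vacuous on $\mathrm{M}_{m,n}(\R/\Z)$. Also, your normalized box $\{\|\bm x\|_{\bm\alpha}<r,\ \|\bm y\|_{\bm\beta}<r^{-1}\}$ has volume exactly $2^d$; a correct normalization is, for instance, both quasi-norms $<\sqrt{t\psi(t)}$, giving volume $2^d t\psi(t)$.

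For comparison, this paper does not reprove the statement along these lines. It deduces it from Theorem~\ref{thm:improvedirichleconwei} (Remark~\ref{newthmimploldthmREM}), whose divergence case does not need \eqref{equ:condipsi} at all. There the targets are replaced by $\widetilde{\Delta}'_r$, the lattices in $\Delta^{-1}[0,r]$ with a vector in $R_r$. These have comparable measure (Proposition~\ref{prop:meaest}), but their translates $g_{k}\widetilde{\Delta}'_r,\dots,g_{k+J}\widetilde{\Delta}'_r$ with $J\asymp\log(1/r)$ are pairwise disjoint (Proposition~\ref{prop:shortmix}). So the short-range contribution is $\ll b_j$ instead of $\ll b_j\log(1/r_j)$.

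This also corrects your closing heuristic. The sojourn in the target is $\asymp F$, far shorter than one step. What the short range must control are \emph{returns} within $\log(1/F)$ steps; the trivial bound tolerates them under \eqref{equ:condipsi}, and the paper's disjointness lemma excludes them for the refined targets.
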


The following theorem is the main result of the present paper. It
extends Kleinbock and Wadleigh's zero-one law  to general dimensions by removing the assumption \eqref{equ:condipsi} in Theorem \ref{thm:improvedirichleconweiold}. 
It also weakens the monotonicity assumption \eqref{equ:con1psi} by replacing it 
by an assumption requiring $F_\psi(t)=1-t\psi(t)$ to be ``quasi-decreasing" in a certain sense (see condition \eqref{equ:con1psinew} below).
\begin{thm}\label{thm:improvedirichleconwei}
Let $d=m+n$, $(\bm\alpha,\bm\beta)\in \R^m\times \R^n$ and $\varkappa_d, \lambda_d$ be as in Theorem \ref{thm:improvedirichleconweiold}. 
Let $t_0>1$ 
and let $\psi: [t_0,\infty)\to {(0,\infty)}$ be a continuous, decreasing function satisfying \eqref{equ:con2psi} and 
\begin{align}\label{equ:con1psinew}
\exists\, C_{\psi}\geq 1, \, \eta\in (0, 1)\ \text{such that}\ F_{\psi}(t_2)\leq C_{\psi}F_{\psi}(t_1),\qquad \forall\, t_0\leq 
t_1\leq t_2\leq t_1 e^{(\log t_1)^{\eta}}.
\end{align} 
Here $F_{\psi}(t)=1-t\psi(t)$ is as before. %
Then we have
\begin{align}\label{equ:zeroonelaw}
\mathrm{Leb}(\DI_{\bm{\alpha},\bm{\beta}}(\psi))=\begin{cases}
1 &\text{if $\sum_{k}k^{-1}F_{\psi}(k)^{\varkappa_d}\log^{\lambda_d}\left(1+\tfrac{1}{F_{\psi}(k)}\right)<\infty$},
\\[3pt]
0 &\text{if $\sum_{k}k^{-1}F_{\psi}(k)^{\varkappa_d}\log^{\lambda_d}\left(1+\tfrac{1}{F_{\psi}(k)}\right)=\infty$}.
\end{cases}
\end{align}
\end{thm}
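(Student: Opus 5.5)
We follow the dynamical route of \cite{KleinbockStrombergssonYu2022}. By the Dani correspondence, with $g_s=\mathrm{diag}(e^{\alpha_1 s},\ldots,e^{\alpha_m s},e^{-\beta_1 s},\ldots,e^{-\beta_n s})$ and $u_A=\left(\begin{smallmatrix} I_m & A\\ 0& I_n\end{smallmatrix}\right)$, a matrix $A$ fails to be $\psi_{\bm\alpha,\bm\beta}$-Dirichlet if and only if the orbit $g_s u_A\Z^d$ hits, for an unbounded set of times $s$, a shrinking target $\mathcal{K}_{r(s)}\subset X_d=\mathrm{SL}_d(\R)/\mathrm{SL}_d(\Z)$. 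Here $\mathcal{K}_r$ is the set of unimodular lattices with no nonzero point in a box of volume $2^d(1-r)$, and $r(s)$ is essentially $F_\psi(t)$ along the reparametrization $t=e^{s}$ after rescaling. By the computation in \cite{KleinbockStrombergssonYu2022}, $\mu(\mathcal{K}_r)\asymp r^{\varkappa_d}\log^{\lambda_d}(1/r)$. The quasi-monotonicity \eqref{equ:con1psinew} lets us discretize time. Up to the constant $C_\psi$, the target is essentially constant on windows $[s,s+(\log s)^\eta]$ in $s=\log t$, so the hitting problem reduces to a countable family of events $E_k=\{A: g_{s_k}u_A\Z^d\in\mathcal{K}'_{k}\}$ with targets comparable to $\mathcal{K}_{C F_\psi(e^{s_k})}$. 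Comparing sums with integrals turns $\sum_k\mu(\text{targets})$ into the series in \eqref{equ:zeroonelaw}. The $\log(1+1/F)$ form only matters when $F$ is bounded away from $0$, where $\DI$ is trivially null.

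\textbf{Convergence case.} The convergence half should go exactly as in \cite{KleinbockStrombergssonYu2022}. We thicken each target by a bounded neighborhood to absorb the flow over a unit time step; property \eqref{equ:con1psinew} is used here in place of monotonicity of $t\psi(t)$. Equidistribution of expanding horospheres $\{g_s u_A\}$, which holds with an effective rate for smooth functions, then bounds $\mathrm{Leb}(E_k)\ll\mu(\text{thickened target})$. Borel--Cantelli then gives that almost every $A$ hits only finitely often, so $\DI_{\bm\alpha,\bm\beta}(\psi)$ has full measure.

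\textbf{Divergence case.} This is where \eqref{equ:condipsi} was previously needed, and where the new idea enters. We replace $\mathcal{K}_r$ by a carefully chosen subset $\mathcal{K}^*_r\subset\mathcal{K}_r$ with three properties:
\begin{enumerate}
\item[(i)] $\mu(\mathcal{K}^*_r)\gg\mu(\mathcal{K}_r)$;
\item[(ii)] $\mathcal{K}^*_r$ is the preimage of a nice set under a local parametrization near the ``critical'' lattices (those with $d$ pairs of points on the boundary of the box), so that $\mathbf 1_{\mathcal{K}^*_r}$ can be smoothed;
\item[(iii)] hits of $\mathcal{K}^*_r$ at two times $s<s'$ with $s'-s$ small are controlled.
\end{enumerate}
For (iii), the aim is a \emph{short-range} estimate
\[
\mathrm{Leb}(E_k\cap E_l)\ll \mathrm{Leb}(E_k)\,\mu(\mathcal{K}^*_{r_l})\,e^{-c(s_l-s_k)}+\text{(error)},
\]
valid even when $s_l-s_k$ is of order $(\log s_k)^\eta$. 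The point is that a lattice in $\mathcal{K}^*_r$ has all its short vectors pinned near the box boundary; under $g_{s'-s}$ these vectors get pushed out, and lying in $\mathcal{K}^*_{r'}$ again forces a new configuration with an extra factor of the target measure. The $\log$ loss responsible for the numerator of \eqref{equ:condipsi} came from pairs at short distance in the full $\mathcal{K}_r$. Restricting to $\mathcal{K}^*_r$ should remove that loss. For long-range pairs we use effective mixing of the $g_s$-action, i.e.\ exponential decay of correlations for smoothed indicators, applied to $u_A$-translates. Together these give quasi-independence, $\sum_{k,l\le N}\mathrm{Leb}(E_k\cap E_l)\le C(\sum_{k\le N}\mathrm{Leb}(E_k))^2+O(\sum_{k\le N}\mathrm{Leb}(E_k))$. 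The Kochen--Stone form of Borel--Cantelli then gives that a positive-measure set of $A$ lies in infinitely many $E_k$, and invariance of the limsup set under the rational action (or ergodicity of $g_s$ combined with the $\Z$-periodicity) upgrades this to full measure. Hence $\DI_{\bm\alpha,\bm\beta}(\psi)$ is null.

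\textbf{Main obstacle.} The main obstacle is the construction of $\mathcal{K}^*_r$ together with the short-range estimate in (iii). One has to analyze the geometry of lattices near critical lattices of the box, and show that for $0<s'-s\lesssim(\log s)^\eta$ the double-hit set has measure bounded by the product with no extra $\log(1/r)$ factor. My first attempt would be to choose $\mathcal{K}^*_r$ so that the $d$ near-boundary vectors have coordinates separated from each other at scale $r$. In the resulting local coordinates the $g_s$-flow acts by explicit dilations, so the double-hit condition can be computed directly.
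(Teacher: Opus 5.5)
\paragraph{Gap: the short-range ingredient is missing.} Your outline has the right architecture: Dani correspondence; flow-thickened targets with effective equidistribution and Borel--Cantelli for convergence; a sub-target with effective double mixing and a quasi-independence Borel--Cantelli for divergence. But the step you call the main obstacle is exactly the paper's new contribution, and you do not supply it. No $\mathcal{K}^*_r$ is constructed, (iii) remains a heuristic, and your suggested recipe (near-boundary vectors with coordinates separated at scale $r$) gives no mechanism that rules out short-range returns.

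The paper takes $\Delta'_r=\Delta^{-1}[0,r]\cap\cH(R_r)$ with $R_r=(1-\frac{r}{2d},1+\frac{r}{2d})\times(-\sqrt r,\sqrt r)^{d-1}$. That is, it pins one lattice vector $\bm{w}$ near $\bm{e}_1$ with all other coordinates below $\sqrt r$. The measure stays comparable to that of $\Delta^{-1}[0,r]$: in the parametrization of \cite{KleinbockStrombergssonYu2022} the basis vector $\bm{b}_1$ already has this shape, and restricting $b_{i1}$ to $(-\sqrt r,0)$ costs only a constant factor since $\int_r^{\sqrt r}\frac{dz}{z}=\frac12\log\frac1r$. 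What this buys is exact emptiness, not a product bound: $\widetilde{\Delta}'_r\cap g_k\widetilde{\Delta}'_r=\emptyset$ for $1\le k\le\frac14\log\frac1r$ (Proposition \ref{prop:shortmix}). Under $g_{-k}$ the first $m$ coordinates of $\bm{w}$ contract below $e^{-r}$, while the last $n$, starting below $\sqrt r$, have not yet grown to $e^{-r}$. So $g_{-k}\Lambda$ has a nonzero point in the open cube.

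\paragraph{Quantitative ranges and the role of $\eta$.} The pairs not handled by effective double mixing are those with gap $j-i\le\Theta\log j$, because $\|\phi_\rho\|_{C^\ell}\ll\rho^{-\ell}$ with $\rho\ge j^{-\gamma_d}$; they are not gaps of order $(\log s)^\eta$. Also, \eqref{equ:con1psinew} gives quasi-constancy on windows $[s,s+s^\eta]$ in $s\approx\log t$, not $[s,s+(\log s)^\eta]$. The paper covers $j-i\le\Theta\log j$ by block-counting with the disjointness above. This gives the \emph{linear} bound $\sum_{k\le\Theta\log j}\mu_d\bigl(g_k\widetilde{\Delta}'_{2\rho_{i_j}}\cap\widetilde{\Delta}'_{2\rho_j}\bigr)\ll\bigl(\frac{\log j}{J_j}+1\bigr)b_j$, which is $\ll b_j$ only when $J_j\asymp\log\frac{1}{\rho_{i_j}}\gg\log j$. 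That requires the reduction $r(k)\le k^{-\gamma_d'}$ with $\gamma_d'<\eta/\varkappa_d$ (Lemma \ref{lem:redu}). This is where the $s^\eta$-window is really used: Lemma \ref{lem:simplelemvar} shows that truncating to $\min\{r(s),s^{-\gamma_d'}\}$ keeps the series divergent. Nothing in your plan plays this role, so for slowly decaying $r$ the gaps between $\frac14\log\frac1r$ and $\Theta\log j$ would be uncontrolled.

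\paragraph{Upgrade to full measure, and thickening.} Your upgrade from positive to full measure is not justified. Translating $A$ by $P/q$ turns a solution at level $t$ into one whose product $\|\cdot\|_{\bm\alpha}\|\cdot\|_{\bm\beta}$ is larger by a power of $q$, far beyond Dirichlet's bound. So $\DI_{\bm\alpha,\bm\beta}(\psi)$ has no useful rational-translation invariance, and ergodicity of $g_s$ on $X_d$ does not by itself give a zero--one law on $\mathcal{Y}$. The paper avoids this by proving $Q_{k_1,k_2}\ll\sum_k b_k$, which via \cite[Lemma 7.2]{KleinbockStrombergssonYu2022} gives full measure directly. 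Finally, $r^{\varkappa_d}\log^{\lambda_d}\frac1r$ is the measure of the time-thickened target; $\mu_d(\Delta^{-1}[0,r])$ carries exponent $\varkappa_d+1$. Your divergence-case events must therefore be thickened too, and for the lower bound the paper uses that $\underline{K}_r\cap g_{-s}\underline{K}_r=\emptyset$ for $s\in[r,1)$.
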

\begin{remark}\label{equvserdivREM}
Let $\psi$ be as in Theorem \ref{thm:improvedirichleconwei}. Note that assumption  \eqref{equ:con1psinew} implies that for all $j\gg 1$,
\begin{align}\label{equ:quindi}
C_{\psi}^{-1}F_{\psi}(e^{j+1})\leq F_{\psi}(k)\leq \min\{1, C_{\psi}F_{\psi}(e^j)\},\qquad  \forall\, e^j\leq k\leq e^{j+1}.
\end{align}
From this and using the fact that the function $x\mapsto x^{\varkappa_d}\log^{\lambda_d}(1+\tfrac{1}{x})$ is strictly increasing on $(0, 1]$ (which follows from the fact that $\varkappa_d\geq \lambda_d$) we have 
\begin{align}\label{equ:equvserdiv}
\sum_{k}k^{-1}F_{\psi}(k)^{\varkappa_d}\log^{\lambda_d}\left(1+\tfrac{1}{F_{\psi}(k)}\right)=\infty\quad \Leftrightarrow\quad \sum_{j}F_{\psi}(e^j)^{\varkappa_d}\log^{\lambda_d}\left(1+\tfrac{1}{F_{\psi}(e^j)}\right)=\infty.
\end{align}
\end{remark}

\begin{remark}\label{newthmimploldthmREM}
We note that Theorem \ref{thm:improvedirichleconwei} implies Theorem \ref{thm:improvedirichleconweiold}. Indeed, let $\psi$ be as in Theorem \ref{thm:improvedirichleconweiold}, that is,  it satisfies \eqref{equ:con1psi} and \eqref{equ:con2psi}. Then  $0<F_{\psi}(t)<1$ for all $t\geq t_0$ (by \eqref{equ:con2psi}) and $F_{\psi}(t)$ is decreasing in $t$ (by \eqref{equ:con1psi}). This shows that for such $\psi$,
\begin{align}\label{equ:equser}
\sum_{k\geq t_0}k^{-1}F_{\psi}(k)^{\varkappa_d}\log^{\lambda_d}\left(\tfrac{1}{F_{\psi}(k)}\right)=\infty\quad \Leftrightarrow\quad \sum_{k\geq t_0}k^{-1}F_{\psi}(k)^{\varkappa_d}\log^{\lambda_d}\left(1+\tfrac{1}{F_{\psi}(k)}\right)=\infty.
\end{align} 
On the other hand, note that condition \eqref{equ:con1psi} is a special case of \eqref{equ:con1psinew} with $C_{\psi}=1$. Hence  \eqref{equ:zeroonelaw} is applicable for such $\psi$, which, together with \eqref{equ:equser},   clearly  implies the conclusion of Theorem \ref{thm:improvedirichleconweiold}.

We mention that the reason we change the series in \eqref{equ:crticalseriesconj} to the series in \eqref{equ:zeroonelaw} is that for $\psi$ satisfying the assumptions in Theorem \ref{thm:improvedirichleconwei}, it is possible that $\liminf_{t\to\infty}t\psi(t)=0$, or equivalently, $\limsup_{t\to\infty}F_{\psi}(t)=1$. When this happens, in view of \eqref{equ:equvserdiv} and noting that $\limsup_{j\to\infty}F_{\psi}(e^j)>0$ (by \eqref{equ:quindi}) we are in the divergence case of \eqref{equ:zeroonelaw}, and  we will show  $\mathrm{Leb}(\DI_{\bm{\alpha},\bm{\beta}}(\psi))=0$. %
However, since $\lim_{t\to 1^+}\log(t)=0$,
a prori it is possible that the series in \eqref{equ:crticalseriesconj}  converges in this case.

\end{remark}

\subsection{Notation and conventions}\label{notconvSEC}

A real-valued function $f$ defined on an interval $I\subset\R$ is called \textit{increasing} (resp.\ \textit{decreasing}) if $f(t_1) \leq f(t_2)$ (resp.\ $f(t_1) \geq f(t_2)$) whenever $t_1 < t_2$.
For two positive quantities $A$ and $B$, we write ``$A\ll B$" to denote $A\leq cB$ for some constant $c>0$, and we write
``$A\asymp B$" to denote $A\ll B\ll A$. We will also write ``$O(A)$" to denote any real number $E$ satisfying $|E|\leq cA$ for some constant $c>0$. We will sometimes use subscripts to indicate the dependence of the implicit constant (i.e., the constant $c$ above) on parameters.

\vspace{5pt}

\noindent\textbf{Acknowledgements.}
We would like to thank Ren\'e Pfitscher and Han Zhang for helpful discussions. 

\section{Measure estimate of a certain family of sets}\label{measestSEC}
Fix an integer $d\geq 2$. Let $G=\SL_d(\R)$, $\G=\SL_d(\Z)$ and let $X_d=G/\G$ be the space of unimodular lattices in $\R^d$. Let $\pi: G\to X_d$ be the natural projection from $G$ to $X_d$. Let $\mu_d$ be the unique invariant probability measure on $X_d$. Let $\Delta: X_d\to [0,\infty)$ be the function defined by
\begin{align}\label{def:delta}
\Delta(\Lambda):=\sup_{\bm{v}\in \Lambda\setminus \{\bm{0}\}}\log\Bigl(\tfrac{1}{\|\bm{v}\|}\Bigr).
\end{align}
We note that the fact that $\Delta(\Lambda)\geq 0$ for any $\Lambda\in X_d$ is a consequence of the Minkowski's convex body theorem, %
while the level set $\Delta^{-1}\{0\}$ is the \textit{critical locus} of $X_d$ 
with respect to the supremum norm, which, by Haj\'os's Theorem \cite{Hajos1941}, is a finite union of explicit compact submanifolds of codimension
$\frac{(d-1)(d+2)}2$, 
cf. \cite[Theorem 2.3]{KleinbockStrombergssonYu2022}. 
For our purpose we are naturally interested in the level sets $\Delta^{-1}[0,r]$ for small $r>0$ as they will describe the target sets in our shrinking target problem. 
Indeed, the following measure estimate on $\Delta^{-1}[0,r]$ was proved in \cite{KleinbockStrombergssonYu2022}:
\begin{Thm}[\text{\cite[Theorem 1.3]{KleinbockStrombergssonYu2022}}]\label{KSYTHM1p3}
We have
\begin{align}\label{equ:meestksy}
\mu_d\left(\Delta^{-1}[0, r]\right)\asymp_d r^{\frac{(d-1)(d+2)}{2}}\log^{\frac{d(d-1)}{2}}\Bigl(\tfrac{1}{r}\Bigr),%
\qquad \textrm{as $r\to 0^+$}.
\end{align}
\end{Thm}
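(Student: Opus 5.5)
The plan is to localize $\Delta^{-1}[0,r]$ near the critical locus $\Delta^{-1}\{0\}$, describe it there in explicit coordinates, and integrate. By Hajós's theorem, every critical lattice for the sup-norm cube is, after a permutation of coordinates, of the form $u\Z^d$ with $u$ upper triangular unipotent. So $\Delta^{-1}\{0\}$ is a finite union of compact pieces, each parametrized by $u\in N/(N\cap\Gamma)$, a torus-like set of dimension $\lambda_d=\frac{d(d-1)}2$.

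\emph{Step 1 (localization).} A compactness/continuity argument shows that for small $r$ every $\Lambda\in\Delta^{-1}[0,r]$ lies in a small neighbourhood of $\Delta^{-1}\{0\}$. In that neighbourhood I write $\Lambda = u\,a\,v\,\Z^d$ with:
\begin{itemize}
\item $u$ upper triangular unipotent, the coordinate along the locus;
\item $a=\mathrm{diag}(e^{s_1},\dots,e^{s_d})$ with $\sum_i s_i=0$;
\item $v$ lower triangular unipotent with small entries $x_{ij}$.
\end{itemize}
Together $a$ and $v$ give the $\frac{(d-1)(d+2)}2=(d-1)+\frac{d(d-1)}2$ transverse directions. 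Haar measure is comparable to $du\,ds\,dx$ there, so $\mu_d(\Delta^{-1}[0,r])$ is bounded above and below by constant multiples of $\int \mathrm{vol}\{(s,x):u a v\Z^d \text{ avoids } (-e^{-r},e^{-r})^d\setminus\{\bm 0\}\}\,du$.

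\emph{Step 2 (fibre volume for fixed $u$).} Testing the condition on the vectors $ua v\bm e_j$ and on short integer combinations of them gives necessary conditions. These are $s_i\gtrsim -r$ for the diagonal, hence $|s_i|\ll r$. For the lower entries they take the shape $|x_{ij}|\ll r/\max(r,\|u_{ij}\|_{\R/\Z})$, up to analogous expressions in the other entries of $u$. Conversely, a slightly smaller box of the same shape is sufficient. The heuristic is this: when the relevant entry of $u$ is far from $0$ mod $1$, the cube tiling by $u\Z^d$ is rigid in that transverse direction, so a perturbation of size $\gg r$ creates a short vector. When it is near $0$, two neighbouring tiles can slide, and the admissible range grows to size about $r/\|u_{ij}\|$. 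This should give
$$\mathrm{vol}\asymp r^{d-1}\prod_{i<j}\min\Bigl(1,\tfrac{r}{\|u_{ij}\|_{\R/\Z}}\Bigr)\cdot(\text{truncation at size }1).$$

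\emph{Step 3 (integration).} Integrating over $u$ gives, for each of the $\lambda_d$ entries, $\int_0^1\min(1,r/t)\,dt\asymp r\log(1/r)$. The total is therefore $\asymp r^{d-1}\,(r\log\frac1r)^{\lambda_d}=r^{\frac{(d-1)(d+2)}2}\log^{\lambda_d}\frac1r$. Only finitely many pieces and permutations are involved, so they affect constants only.

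\emph{Main obstacle.} The hard step is Step 2, specifically the matching upper bound uniformly in $u$, including near degenerate points where several entries of $u$ are simultaneously close to $0$. There the locus pieces for different permutations meet and the entries $x_{ij}$ are coupled rather than independent. I would organize it by induction on $d$. Project onto the last coordinate, or split off the first basis vector. The lower-dimensional slice is then an admissible $(d-1)$-lattice up to a factor $e^{O(r)}$, and the new row and column entries satisfy one-dimensional constraints of the form above. Each new coordinate contributes either a factor $r$ (diagonal) or a factor $r\log\frac1r$ (off-diagonal with $u$-integration), and these give the exponents inductively. For the lower bound I would use an explicit product box in $(s,x)$ and check admissibility directly via the tiling structure.
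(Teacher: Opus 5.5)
\textbf{Verdict: genuine gap.} Your architecture is the right heuristic: localize at Haj\'os's locus, take $r^{d-1}$ from the diagonal and a hyperbolic factor $\min\{1,r/|\cdot|\}$ for each of the $\lambda_d$ transposed pairs, then integrate. It is also the shape of the lower-bound computation recalled in Section~\ref{lowerboundrecapSEC}; the paper itself does not reprove the theorem but quotes it from \cite{KleinbockStrombergssonYu2022}. However, Step~2 is the entire content of the theorem. You only assert it, and as formulated it is false.

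\textbf{Upper bound.} Your claimed necessary condition $|x_{ji}|\ll r/\max(r,\|u_{ij}\|_{\R/\Z})$ ($i<j$) is not compatible with $u\sim u\gamma$, $\gamma\in N\cap\Gamma$. Take $d=3$, with $E_{ij}$ the matrix units, $u=I+0.3E_{12}+0.3E_{13}+E_{23}$, $a=I$ and $v=I-cE_{21}+cE_{31}$. Then $uv=(I+0.3E_{12})(I+cE_{31})(I+E_{23})$. So $uv\Z^3$ is critical, being upper unipotent for the coordinate order $3,1,2$. Yet $\|u_{12}\|_{\R/\Z}=\|u_{13}\|_{\R/\Z}=0.3$ while $|x_{21}|=|x_{31}|=c$ is an arbitrary small constant. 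The same happens up to $O(\epsilon)$ with $u_{23}=1-\epsilon$ if you insist on a fundamental domain.

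More seriously, under your own formula the full $\log^{\lambda_d}$ comes from $u$ with \emph{all} entries in $[r,c]$. These are lattices near $\Z^d$, where the pieces of the locus for all coordinate orderings meet. So the ``degenerate'' case you postpone is the main term, and there the transverse coordinates are genuinely coupled:
\begin{itemize}
\item Test vectors give conditions on sums, not on individual products. For example, the first coordinate of $w=uav\bm{e}_1$ is $e^{s_1}+\sum_{k\ge 2}u_{1k}e^{s_k}x_{k1}$.
\item Your induction step is unsupported. The transverse coordinates of the split-off vector $w$ are $\approx x_{k1}$, of size up to $r/|u_{1k}|\gg r$. Lifting a short vector of the projection of $\Lambda$ along $w$ onto $\bm{e}_1^\perp$ therefore only yields a lattice vector of sup norm $<1-Cr+\frac12\max_{k\ge2}|w_k|$, which is no contradiction.
\item Showing the projected lattice is $O(r)$-admissible requires bounding products around cycles such as $u_{12}u_{23}x_{31}$, which enter its off-diagonal entries. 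That is exactly the coupling you have not handled.
\end{itemize}

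\textbf{Lower bound.} No box of your product form lies in $\Delta^{-1}[0,r]$, even with signs restricted. Take $u=I+\tau(E_{12}+E_{23})$, $a=I$, $v=I-\kappa E_{31}$ with $0<\tau\le\kappa\le\frac12$. Every product $u_{ij}x_{ji}$ vanishes and the signs are the favourable ones. Yet $uv\Z^3$ contains $b_1-b_2+b_3=(1-\tau,\,-(1-\tau+\tau\kappa),\,1-\kappa)$, of sup norm $1-\tau(1-\kappa)\le 1-\frac{\tau}{2}$. Hence $\Delta(uv\Z^3)>r$ once $\tau>2r$, and this persists on an open set.

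Admissibility therefore requires two things your plan does not supply:
\begin{itemize}
\item Fixed sign patterns. Already for $d=2$, at fixed $u\gg r$ the allowed $x$ form a one-sided interval like $[-Cr/u,\,Cr]$.
\item Exclusion of cyclic configurations like the one above.
\end{itemize}
In the argument the paper recalls, this is the role of the sign conditions \eqref{equ:con1} and the ordering conditions \eqref{equ:con2}, \eqref{equ:con4} in Proposition~\ref{prop:regulargen}. That region is built in raw column coordinates $p_{\bm{b}_1,\dots,\bm{b}_{d-1}}u_{\bm{x}}$ near $\Z^d$, and these conditions are what allow $\underline{K}_r\subset K_r$. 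After the substitution $b_{ij}=-d^{j-1}z_{ij}$ they become \eqref{equ:ordering}, which costs only a constant factor in $\prod_{j<i}\int dz_{ij}/z_{ij}$. Your plan contains neither this restriction nor a substitute. As it stands, neither inequality in \eqref{equ:meestksy} is established.
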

For our purpose of removing the technical assumption \eqref{equ:condipsi}, we need to study a more carefully chosen subset of $\Delta^{-1}[0,r]$ which we now introduce. For any Borel set $\mathcal{A}\subset \R^d$, set
$$
\cH(\mathcal{A}):=\left\{\Lambda\in X_d: (\Lambda\setminus\{\bm{0}\})\cap \mathcal{A}\neq \emptyset\right\}.
$$
For any $0<r<1$, 
let 
$$
R_r:=(1-\tfrac{r}{2d}, 1+\tfrac{r}{2d})\times (-\sqrt{r}, \sqrt{r})^{d-1}\subset \R^d,
$$ 
and define
\begin{align}
\Delta'_r:=\Delta^{-1}[0, r]\cap \cH(R_r),\qquad \forall\, 0<r<1. %
\end{align}
We note that  compared to $\Delta^{-1}[0,r]$,
$\Delta_r'$ satisfies the following additional property: 
\begin{align}\label{equ:keyproperty}
    \Lambda\in \Delta_r'\quad \Rightarrow\quad \exists\, \bm{v}\in \Lambda\ \text{such that}\ \bm{v}\in R_r=(1-\tfrac{r}{2d}, 1+\tfrac{r}{2d})\times (-\sqrt{r}, \sqrt{r})^{d-1}. 
\end{align}
This property will allow us to prove a disjointness result for our shrinking targets in a larger range compared to taking $\Delta^{-1}[0,r]$ directly as the target sets as in \cite{KleinbockStrombergssonYu2022}; see Proposition \ref{prop:shortmix} below. 

On the other hand, we show below that the $\mu_d$-measure of $\Delta_r'$ is comparable to that of $\Delta^{-1}[0,r]$:
\begin{Thm}\label{thm:meaestss}
We have
\begin{align}\label{equ:meaestss}
\mu_d\left(\Delta'_r\right)\asymp_d r^{\frac{(d-1)(d+2)}{2}}\log^{\frac{d(d-1)}{2}}\Bigl(\tfrac{1}{r}\Bigr),%
\qquad \textrm{as $r\to 0^+$}.
\end{align}
\end{Thm}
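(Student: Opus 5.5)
The upper bound is immediate: $\Delta'_r\subset\Delta^{-1}[0,r]$, so Theorem \ref{KSYTHM1p3} gives $\mu_d(\Delta'_r)\ll_d r^{\frac{(d-1)(d+2)}{2}}\log^{\frac{d(d-1)}{2}}(1/r)$. The work is therefore in the lower bound. The plan is to revisit the structure behind the lower bound in \cite{KleinbockStrombergssonYu2022}, and show that a positive proportion of the lattices counted there already have a lattice vector in $R_r$.

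Recall how the lower bound is obtained. Lattices in $\Delta^{-1}[0,r]$ are small perturbations, in a controlled way, of critical lattices of the cube, which by Haj\'os's theorem can be brought (after a permutation of coordinates, signs, and possibly swapping roles) into the form $U\Z^d$ with $U$ upper triangular unipotent. Each critical lattice contains the basis vector $\bm{e}_1$, lying on the face $x_1=1$ of the cube. The measure count comes from a neighborhood of the critical locus in Iwasawa-type coordinates $g=nak$. The transverse directions of codimension $\frac{(d-1)(d+2)}{2}$ each contribute a factor $r$, and a simplex-shaped set of diagonal (or triangular) parameters contributes the $\log^{d(d-1)/2}(1/r)$ factor.

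Step 1: parametrize. I would write a lattice near the critical locus as $\Lambda=g\Z^d$ with $g$ in a fixed product neighborhood in $G$, and use the concrete description from \cite[Section on the measure estimate]{KleinbockStrombergssonYu2022}. There, $\Delta(\Lambda)\le r$ is characterized up to constants by explicit inequalities on the coordinates, essentially that the basis vectors $g\bm{e}_i$ have sup-norm at least $e^{-r}$ together with linear constraints. I would choose the labeling so that the lattice vector $\bm{v}=g\bm{e}_1$ plays the role of the distinguished vector, with first coordinate $v_1\in[e^{-r},1+O(r)]$ and small remaining coordinates.

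Step 2: impose the extra condition. I would show that requiring $\bm{v}\in R_r$ costs only a constant factor. Its first coordinate is already within $O(r)$ of $1$, and shrinking that window to width $r/(2d)$ is a restriction of a length-$\asymp r$ parameter to a proportional subinterval. The other coordinates $v_2,\dots,v_d$ must lie in $(-\sqrt r,\sqrt r)$.

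Step 3: handle the size of $v_2,\dots,v_d$, which I expect to be the main obstacle. In the KSY region these coordinates may range over a set of size $O(1)$, in the "logarithmic" directions, not just $O(r)$. So I must check that restricting them to size $\sqrt r$ does not destroy the $\log^{d(d-1)/2}$ factor. My first attempt would use the symmetry group of the cube: permutations and sign changes of coordinates, together with the rotation of critical lattices. For any $\Lambda\in\Delta^{-1}[0,r]$, some lattice vector near a face has all its other coordinates small (the "corner" vector in the Haj\'os structure). I would try to show that, after a signed permutation $\sigma$ preserving $\Delta$ and $\mu_d$, a fixed proportion of $\Delta^{-1}[0,r]$ maps into $\cH(R_r)$. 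Then $\mu_d(\Delta^{-1}[0,r])\le C_d\,\mu_d(\Delta'_r)$, since there are finitely many signed permutations.

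If that fails for vectors with coordinates of size $\gg\sqrt r$, the fallback is a direct computation in the KSY coordinates. Restrict to the subregion where the off-diagonal parameters attached to $\bm{e}_1$ are $<\sqrt r$ and the remaining parameters range as before, then recompute the integral. The restricted variables live in the $r$-scale or are logarithmic variables cut at $\log(1/\sqrt r)=\tfrac12\log(1/r)$. Either way the cut changes only the constant, giving the desired $\asymp_d$ lower bound.
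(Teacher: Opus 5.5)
Your fallback is the paper's argument. In the KSY coordinates $g=p_{\bm b_1,\dots,\bm b_{d-1}}u_{\bm x}$, the lattice vector $g\bm e_1=\bm b_1$ already satisfies $b_{11}\in(1-\tfrac{r}{2d},1)$. The paper adds the condition $b_{i1}\in(-\sqrt r,0)$ for $2\le i\le d$, and restricts every rescaled entry $z_{ij}=d^{1-j}|b_{ij}|$ to $(r,\sqrt r)$; this is compatible with the ordering constraints, so each of the $\tfrac{d(d-1)}{2}$ logarithmic integrals is only halved. Your symmetry-based ``first attempt'' is not needed and, as stated, is unproven: it would require a structural statement about all of $\Delta^{-1}[0,r]$, including the narrow window $|v_1-1|<\tfrac{r}{2d}$. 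You can drop it.
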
 

\subsection{Recap of the lower bound in (\ref*{equ:meestksy})}\label{lowerboundrecapSEC}
Since $\Delta'_r\subset \Delta^{-1}[0,r]$, in view of \eqref{equ:meestksy}, in order to prove Theorem \ref{thm:meaestss}, we only need to prove the lower bound in \eqref{equ:meaestss}. We will follow closely the approach in \cite{KleinbockStrombergssonYu2022}. Let us first  review relavent results in \cite{KleinbockStrombergssonYu2022} proving the lower bound in \eqref{equ:meestksy}. 

We first introduce the following set of coordinates that we will be working with; see \cite[Section 2.1]{KleinbockStrombergssonYu2022} for more details.  
Let $\{\bm{e}_i: 1\leq i\leq d\}$ be the standard orthonormal basis of $\R^d$. 
Let 
\begin{align*}
P=\{p\in G\col 
p\bm{e}_d=t\bm{e}_d\ \textrm{for some $t\neq 0$}\}<G,
\end{align*}
be the maximal parabolic subgroup fixing the line spanned by $\bm{e}_d\in \R^d$, and let 
\begin{align*}
N=\left\{u_{\bm{x}}:=\left(\begin{smallmatrix}
I_{d-1} & \bm{x}\\
\bm{0}^t & 1\end{smallmatrix}\right)\col \bm{x}\in \R^{d-1}\right\}<G
\end{align*}
 be the transpose of the unipotent radical of $P$.
 
Let $\nu$ be the Haar measure of $G$ that locally agrees with $\mu_d$. Up to removing a null set, every element $g\in G$ can be written uniquely as a product:
\begin{align}\label{puCOORD}
g=p_{\bm{b}_1,\ldots, \bm{b}_{d-1}}u_{\bm{x}}\quad{\text{for some $p_{\bm{b}_1,\ldots, \bm{b}_{d-1}}\in P$ and }u_{\bm{x}}\in N}.
\end{align}
Here $p=p_{\bm{b}_1,\cdots,\bm{b}_{d-1}}\in P$ is the unique element in $ P$ satisfying that $p\bm{e}_i=\bm{b}_i$ for all $1\leq i\leq d-1$. 
In terms of the coordinates in \eqref{puCOORD}, %
$\nu$ is given by 
\begin{equation}\label{equ:haar}
\mathrm{d}\nu(g)=\frac{1}{\zeta(2)\cdots \zeta(d)}\, \mathrm{d} \bm{x}\prod_{1\leq i\leq d-1}\mathrm{d}\bm{b}_i,
\end{equation}
where $\zeta(\cdot)$ is the Riemann zeta function,
and $\mathrm{d}\bm{x}$ and $\mathrm{d}\bm{b}_i$ denote Lebesgue measure on $\R^{d-1}$ and $\R^d$,
respectively.

For any $r\in (0,1)$, let 
\begin{align*}
K_r:=\left\{\Lambda\in X_d: \Lambda\cap (r-1,1-r)^d=\{\bm{0}\}\right\}.
\end{align*}
Note that $\Delta^{-1}[0,r]=K_{1-e^{-r}}$ (or equivalently, $K_r=\Delta^{-1}[0,-\log(1-r)]$) and $1-e^{-r}=r+O(r^2)\asymp r$; hence to show the lower bound in \eqref{equ:meestksy}, it suffices to show that 
\begin{align}\label{equ:krmeaest}
\mu_d(K_r)\gg_d r^{\frac{(d-1)(d+2)}{2}}\log^{\frac{d(d-1)}{2}}\Bigl(\tfrac{1}{r}\Bigr),%
\qquad \textrm{as $r\to 0^+$}.
\end{align}
The following proposition, proved in \cite{KleinbockStrombergssonYu2022}, implies the above lower bound; see \cite[Proposition 3.2]{KleinbockStrombergssonYu2022} and the computation after it, \cite[pp.\ 796-797]{KleinbockStrombergssonYu2022}. 

\begin{Prop}\label{prop:regulargen}
There exists a small parameter $c_0\in (0,1)$ depending only on $d$ such that the following holds: For any  $r\in (0,c_0/d)$, let $\underline{\cK}_r\subset G$ be defined such that $g=p_{\bm{b}_1,\dots,\bm{b}_{d-1}}u_{\bm{x}}\in \underline{\cK}_r$ if and only if  
$\bm{b}_j=(b_{1j},\ldots,b_{dj})^t\in\R^d$ ($j=1,\ldots,d-1$)
and $\bm{x}\in (0,c_0/d)^{d-1}$ satisfy the following conditions:
\begin{align}\label{equ:con1}
b_{ij}, -b_{ji}\in (-c_0, 0),\ \forall\ 1\leq j< i\leq d-1; \ b_{d\ell}\in (-c_0, 0), b_{\ell \ell}\in (1-\tfrac{r}{2d},1), \  \forall\ 1\leq \ell\leq d-1,%
\end{align}
\begin{align}\label{equ:con2}
b_{ij}<db_{i,j-1}\ (\Leftrightarrow |b_{ij}|>d|b_{i,j-1}|),\qquad \forall\ 2\leq j<i\leq d,
\end{align}
\begin{align}\label{equ:con3}
|b_{ij}b_{ji}|<\frac{r}{d!},\quad \forall 1\leq j<i\leq d-1,\qquad \textrm{and}\qquad \sum_{j=1}^{d-1}|b_{dj}|x_j<\frac{r}{2},
\end{align}
and
\begin{align}\label{equ:con4}
b_{kj}>b_{ij}\ (\Leftrightarrow |b_{kj}|<|b_{ij}|),\qquad \forall 1\leq j<k<i\leq d.
\end{align}
Then $\pi|_{\underline{K}_r}: \underline{\cK}_r\to X_d$ is injective, $\underline{K}_r:=\pi(\underline{\cK}_r)\subset K_r$, and 
\begin{align*}
\mu_d(\underline{K}_r)\gg_d r^{\frac{(d-1)(d+2)}{2}}\log^{\frac{d(d-1)}{2}}\Bigl(\tfrac{1}{r}\Bigr),%
\qquad \textrm{as $r\to 0^+$}.
\end{align*}
\end{Prop}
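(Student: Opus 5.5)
The plan is to verify the three claims of Proposition \ref{prop:regulargen} separately: injectivity of $\pi$ on $\underline{\cK}_r$, the inclusion $\underline K_r\subset K_r$, and the volume lower bound. The volume bound uses the Haar measure formula \eqref{equ:haar} in the coordinates \eqref{puCOORD}.

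\emph{Injectivity.} For $g=p_{\bm b_1,\dots,\bm b_{d-1}}u_{\bm x}\in\underline{\cK}_r$:
\begin{itemize}
\item Each column $g\bm e_j=\bm b_j$ with $j\le d-1$ has diagonal entry in $(1-\tfrac r{2d},1)$ and off-diagonal entries in $(-c_0,c_0)$.
\item The last column is $g\bm e_d=\sum_j x_j\bm b_j+\tau\bm e_d$, where $\tau=\det(p|_{\R^{d-1}})^{-1}$, and $\tau$ is close to $1$ because $\det g=1$.
\end{itemize}
Hence $\|g-I\|\ll_d c_0$. If $\pi(g)=\pi(g')$ with $g,g'\in\underline{\cK}_r$, then $\gamma:=g^{-1}g'\in\Gamma$ satisfies $\|\gamma-I\|\ll_d c_0<1$ once $c_0$ is small. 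Since $\gamma$ is an integer matrix, this forces $\gamma=I$, so $g=g'$.

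\emph{Inclusion in $K_r$.} For nonzero $\bm m\in\Z^d$ we must show $\|g\bm m\|\ge 1-r$. I would argue by cases on the support of $\bm m$.
\begin{itemize}
\item If $m_d=0$, take the smallest index $\ell$ with $m_\ell\neq0$ and look at the $\ell$-th coordinate of $g\bm m=\sum_j m_j\bm b_j$. This coordinate is $m_\ell b_{\ell\ell}+\sum_{j>\ell}m_jb_{\ell j}$. By \eqref{equ:con1}, the entries $b_{\ell j}$ with $j>\ell$ are positive and small, while the entries $b_{j\ell}$ below the diagonal are negative.
\item I would then compare this $\ell$-th coordinate with the $j$-th coordinates for $j>\ell$. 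The sign pattern, together with the product bound $|b_{ij}b_{ji}|<r/d!$ in \eqref{equ:con3}, shows that the coordinates cannot all have modulus $<1-r$. The $r/d!$ factor is there to absorb the at most $d!$ cross terms that arise.
\item The ordering conditions \eqref{equ:con2} and \eqref{equ:con4} control which row "wins" when several $m_j$ are nonzero. Roughly, the entries $|b_{ij}|$ in column $j$ decrease as $i$ moves from $d$ up towards $j+1$, and increase geometrically along each row $i$.
\item The case $m_d\neq0$ is handled the same way using the last column. Here the condition $\sum_j|b_{dj}|x_j<r/2$ guarantees that the $d$-th coordinate $\tau m_d+\sum_j m_j(\ldots)$ stays at least $1-r$ in modulus when the other coordinates are small.
\end{itemize}
I expect this combinatorial case analysis to be the main obstacle. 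I would organise it as an induction on $d$, peeling off the first row and column.

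\emph{Volume.} By \eqref{equ:haar}, $\mu_d(\underline K_r)$ is a constant times the Lebesgue volume of the region defined by \eqref{equ:con1}--\eqref{equ:con4}. I would count the contributions of the constraints as follows.
\begin{itemize}
\item The $d-1$ diagonal constraints $b_{\ell\ell}\in(1-\tfrac r{2d},1)$ give a factor $\asymp r^{d-1}$.
\item Each of the $\binom{d-1}{2}$ pairs $(b_{ij},b_{ji})$ with $|b_{ij}|,|b_{ji}|<c_0$ and $|b_{ij}b_{ji}|<r/d!$ ranges over a hyperbolic region of area $\asymp r\log(1/r)$.
\item The $d-1$ pairs $(b_{dj},x_j)$ satisfy the constraint $\sum_j|b_{dj}|x_j<r/2$. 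This contains the product region $\{|b_{dj}|x_j<r/(2d)\ \forall j\}$, of volume $\asymp (r\log(1/r))^{d-1}$.
\end{itemize}
The exponents add up correctly:
\begin{align*}
(d-1)+\tbinom{d-1}{2}+(d-1)=\tfrac{(d-1)(d+2)}{2},\qquad \tbinom{d-1}{2}+(d-1)=\tfrac{d(d-1)}{2}.
\end{align*}

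It remains to check that \eqref{equ:con2} and \eqref{equ:con4} cost only a constant factor. In logarithmic coordinates $s_{ij}=\log(1/|b_{ij}|)$, each hyperbolic pair becomes a region of the form $s_{ij}+s_{ji}\ge\log(d!/r)$ with $s\ge\log(1/c_0)$. The ordering conditions are half-spaces such as $s_{ij}\le s_{i,j-1}-\log d$, together with the analogous column inequalities. Since every $s$-variable ranges over an interval of length $\asymp\log(1/r)$, restricting to the cone cut out by finitely many such strict orderings keeps a positive proportion of the volume, uniformly as $r\to0$. A convenient way to see this is to pass to a subregion where each $s_{ij}$ lies in a prescribed window of length $\epsilon\log(1/r)$ compatible with all the orderings.
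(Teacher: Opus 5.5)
Your injectivity argument is correct. Your volume count is essentially the paper's: the computation reproduced in the proof of Proposition \ref{prop:meaestlobd} integrates out each partner $b_{ji}$ or $x_j$, which leaves the weight $\prod_{1\le j<i\le d}\min\{1,r/|b_{ij}|\}$ on the $\binom{d}{2}$ lower entries. After $b_{ij}=-d^{j-1}z_{ij}$, the conditions \eqref{equ:con2} and \eqref{equ:con4} become a fixed partial order on the $z_{ij}$, which costs only a constant factor in $\int\prod dz_{ij}/z_{ij}$. Your windows should be placed on these lower entries after the partners are integrated out, because in the joint log coordinates the measure is $e^{-s_{ij}-s_{ji}}\,ds_{ij}\,ds_{ji}$, not uniform.

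\emph{The gap.} The genuine gap is the inclusion $\underline{K}_r\subset K_r$, which is the real content of the proposition; the paper takes it from \cite[Proposition 3.2]{KleinbockStrombergssonYu2022}. You do not prove it, and the mechanism you sketch is not the one that works.
\begin{itemize}
\item For $m_d=0$, the sign pattern plus the product bound do not suffice. For $d\ge4$, take $b_{12}=b_{23}=|b_{31}|=c_0/2$, and make their partners $b_{21},b_{32},b_{13}$, all other off-diagonal entries, and all $x_j$ of size $O(r^2)$. Then \eqref{equ:con1} and \eqref{equ:con3} hold, yet $\bm m=(1,-1,1,0,\dots,0)$ gives $\|g\bm m\|\le 1-c_0/2+O(r^2)<1-r$. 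Only \eqref{equ:con2} (namely $|b_{32}|>d|b_{31}|$) excludes this. The coordinate that then works is the third one, i.e.\ the largest nonzero index of $\bm m$, not the smallest.
\item For $m_d\neq0$, the $d$-th coordinate is about $\tau-|b_{dj}|$ whenever the largest $j<d$ with $m_j\ne0$ has the sign of $m_d$. So you are forced into a row $i<d$, where the terms $x_lb_{il}$ are controlled only through $|b_{il}|<|b_{dl}|$ from \eqref{equ:con4}.
\item You also need $\tau\ge 1-\frac r2$, which you never address.
\end{itemize}

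\emph{A route that closes it.} Since $\|g^{-1}-I\|\ll_d c_0$, any $\bm m\neq\bm 0$ with $\|g\bm m\|<1-r$ lies in $\{-1,0,1\}^d$. Normalize so that the largest index $i_1$ with $m_{i_1}\ne0$ has $m_{i_1}=1$. List the nonzero indices $i_1>i_2>\dots>i_s$, and let $i_k$ be the first one with $m_{i_{k+1}}=-1$ (or $i_s$ if there is none). Write $(g\bm m)_i=\sum_{l<d}(m_l+m_dx_l)b_{il}$ for $i<d$, and $(g\bm m)_d=m_d\tau+\sum_{l<d}(m_l+m_dx_l)b_{dl}$. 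In coordinate $i_k$:
\begin{itemize}
\item the diagonal contribution is $\ge b_{i_ki_k}$ (or equals $\tau$ if $i_k=d$);
\item the positive entries $b_{i_kl}$ with $l>i_k$ meet only multipliers $m_l+m_dx_l\ge0$;
\item the terms $\sum_{l<i_k}m_lb_{i_kl}$ are $\ge0$, because \eqref{equ:con2} gives $|b_{i_ki_{k+1}}|>(d-1)\sum_{l<i_{k+1}}|b_{i_kl}|$;
\item the remaining terms are $\ge-\sum_{l<i_k}x_l|b_{i_kl}|\ge-\sum_l x_l|b_{dl}|>-\frac r2$, by \eqref{equ:con4} and \eqref{equ:con3}.
\end{itemize}
Hence $|(g\bm m)_{i_k}|>1-r$, provided $\tau\ge1-\frac r2$.

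For the bound on $\tau$, note that $\tau=1/\det(b_{ij})_{i,j<d}$, and every non-identity term of the Leibniz expansion has modulus $<r/d!$. To see this, take a cycle of $\sigma$ with largest element $M$ and smallest element $\mu$, and put $a=\sigma^{-1}(M)$ and $p=\sigma^{-1}(\mu)$. Moving down column $\mu$ from row $p$ to row $M$ and then right along row $M$ to column $a$, \eqref{equ:con4} and \eqref{equ:con2} give $|b_{p\mu}|\le|b_{Ma}|$. So the cycle's product is at most $|b_{aM}b_{Ma}|<r/d!$. Summing over $\sigma$ gives $\det<1+r/d$, hence $\tau>1-r/d\ge1-\frac r2$. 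This determinant bound, not the $m_d=0$ case, is where the factor $r/d!$ in \eqref{equ:con3} is actually used.
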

\subsection{Proof of Theorem \ref*{thm:meaestss}}
We now give the proof of Theorem \ref{thm:meaestss}. As we have discussed above, it suffices to prove the lower bound in  \eqref{equ:meaestss}. 

For any $0<r<(c_0/d)^2$, define {$\underline{\cK}'_r$ to be the 
subset of $\underline{\cK}_r$ consisting of all 
$g=p_{\bm{b}_1,\dots,\bm{b}_{d-1}}u_{\bm{x}}\in\underline{\cK}_r$
which satisfy the extra condition
\begin{align}\label{EQU:addcond}
b_{i1}\in (-\sqrt{r}, 0),\,\qquad \forall\, 2\leq i\leq d. 
\end{align}
In other words, an element $g=p_{\bm{b}_1,\dots,\bm{b}_{d-1}}u_{\bm{x}}$ in $G$
with $\bm{b}_j=(b_{1j},\ldots,b_{dj})^t\in\R^d$ ($j=1,\ldots,d-1$)
belongs to $\underline{\cK}'_r$ if and only if
$\bm{x}\in (0,c_0/d)^{d-1}$ and all the five conditions
\eqref{equ:con1}, \eqref{equ:con2}, \eqref{equ:con3}, \eqref{equ:con4}
and \eqref{EQU:addcond} hold.
It may be noted that the new 
condition \eqref{EQU:addcond}
is a sharpening of the restrictions
$b_{i1}\in (-c_0, 0)$ ($\forall 2\leq i\leq d$)
which are part of condition \eqref{equ:con1}.}

Set also 
\begin{align}\label{equ:kr'}
\underline{K}_r':=\pi(\underline{\cK}_r')\subset X_d.
\end{align}
Then by Proposition \ref{prop:regulargen} we have 
$$
\underline{K}_r'\subset \underline{K}_r\subset K_r.
$$ 
Moreover, because of the condition \eqref{EQU:addcond} and $b_{11}\in (1-\frac{r}{2d}, 1)$ (by \eqref{equ:con1}), for any $\Lambda=p_{\bm{b}_1,\dots,\bm{b}_{d-1}}u_{\bm{x}}\mathbb{Z}^d \in \underline{K}_r'$ with $p_{\bm{b}_1,\dots,\bm{b}_{d-1}}u_{\bm{x}}\in \underline{\cK}'_r$, we have 
$$
\bm{b}_1= p_{\bm{b}_1,\dots,\bm{b}_{d-1}}u_{\bm{x}}\bm{e}_1\in (\Lambda\setminus\{\bm{0}\})\cap \left((1-\tfrac{r}{2d},1)\times (-\sqrt{r}, 0)^{d-1} \right)\subset (\Lambda\setminus\{\bm{0}\})\cap R_r.
$$
This implies that $\Lambda\in \cH(R_r)$. 
We have thus proved that $\underline{K}_r'\subset K_r\cap \cH(R_r)$ for every 
$0<r<(c_0/d)^2$; hence
for every such $r$ we also have, using $1-e^{-r}<r$:
\begin{align}\label{equ:inclukeyALT}
\underline{K}_{1-e^{-r}}'\subset K_{1-e^{-r}}\cap \cH(R_{1-e^{-r}})=\Delta^{-1}[0,r]\cap \cH(R_{1-e^{-r}})\subset \Delta^{-1}[0,r]\cap \cH(R_r)=\Delta'_r.
\end{align}
In view of this relation, and the fact that $1-e^{-r}\asymp r$, in order to prove the lower bound in \eqref{equ:meaestss}, it suffices to prove the following measure estimate on $\mu_d(\underline{K}_r')$.

\begin{Prop}\label{prop:meaestlobd}
We have
\begin{align}\label{equ:kr'meest}
\mu_d(\underline{K}_r')\gg_d r^{\frac{(d-1)(d+2)}{2}}\log^{\frac{d(d-1)}{2}}\Bigl(\tfrac{1}{r}\Bigr),%
\qquad \textrm{as $r\to 0^+$}.
\end{align}
\end{Prop}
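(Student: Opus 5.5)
Since $\pi|_{\underline{\cK}_r}$ is injective (Proposition \ref{prop:regulargen}) and $\underline{\cK}'_r\subset\underline{\cK}_r$, we have $\mu_d(\underline{K}'_r)=\nu(\underline{\cK}'_r)$. By \eqref{equ:haar} this equals, up to the constant $(\zeta(2)\cdots\zeta(d))^{-1}$, the Lebesgue volume of the set of $(\bm{b}_1,\dots,\bm{b}_{d-1},\bm{x})$ satisfying \eqref{equ:con1}--\eqref{equ:con4} and \eqref{EQU:addcond}. I will not try to bound this volume from scratch. Instead I will reuse the iterated-integral computation from \cite[pp.\ 796-797]{KleinbockStrombergssonYu2022}, which gives the lower bound for $\nu(\underline{\cK}_r)$, and check that the extra constraint \eqref{EQU:addcond} costs only a constant factor.

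The first step is to identify the variables $b_{i1}$, $2\le i\le d$, and how they enter the conditions.
\begin{itemize}
\item Condition \eqref{equ:con4} with $j=1$ forces $|b_{21}|<|b_{31}|<\dots<|b_{d1}|$.
\item Condition \eqref{equ:con2} with $j=2$ forces $|b_{i2}|>d|b_{i1}|$.
\item Condition \eqref{equ:con3} forces $|b_{i1}b_{1i}|<r/d!$ and $\sum_j |b_{dj}|x_j<r/2$.
\end{itemize}
In the KSY computation, the logarithmic factors $\log^{d(d-1)/2}(1/r)$ arise from integrating pairs $(b_{ij},b_{ji})$ over hyperbolic regions $|b_{ij}b_{ji}|<r/d!$ with both coordinates in $(-c_0,0)$. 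Such a region contributes $\asymp r\log(1/r)$, the logarithm coming from the range $r\lesssim|b_{ij}|\lesssim c_0$ on a logarithmic scale. The chain conditions \eqref{equ:con2} and \eqref{equ:con4} are orderings, and they cut the region only by a constant-proportion factor in logarithmic coordinates. The key observation is that imposing $|b_{i1}|<\sqrt r$ keeps the range $r\lesssim|b_{i1}|\lesssim\sqrt r$, which still has logarithmic length $\tfrac12\log(1/r)$. So each such hyperbolic integral, and hence each log factor, survives up to a constant.

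Concretely, I plan to rerun the KSY computation in logarithmic coordinates $s_{ij}=\log(1/|b_{ij}|)$. In these coordinates the orderings \eqref{equ:con2} and \eqref{equ:con4} and the hyperbola constraints define a polytope-like region whose volume is $\asymp\log^{d(d-1)/2}(1/r)$. Multiplying by the appropriate power of $r$ gives the main term. Adding \eqref{EQU:addcond} means replacing the constraint $s_{i1}>\log(1/c_0)$ by $s_{i1}>\tfrac12\log(1/r)$. To keep this clean, I would exhibit an explicit subregion that works for both. Write $L=\log(1/r)$ and choose disjoint sub-intervals of $[0,L]$ of length $\asymp L$ in which the various $s_{ij}$ are placed, in the order required by \eqref{equ:con2} and \eqref{equ:con4}. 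Put all $s_{i1}$ in the upper half $[\tfrac12 L,\, L-C]$, and the other $s_{ij}$ in the lower half, arranged to respect the chains (for instance $s_{i2}<s_{i1}-\log d$). The partner variables $b_{ji}$ are then integrated over $|b_{ji}|<r/(d!\,|b_{ij}|)$, which gives the factor $r$ against the measure $ds_{ij}$. The remaining variables ($b_{\ell\ell}$, $b_{d\ell}$, $\bm{x}$, and the rows of $\bm b_j$ with $j<i\le d-1$) are handled exactly as in KSY.

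The main obstacle is verifying that the direction of the orderings is compatible with this placement. Condition \eqref{equ:con2} requires $|b_{ij}|$ to increase in $j$, so the $b_{i1}$ are the smallest entries in their rows, that is, they have the largest $s$. This is consistent with placing them in $[\tfrac12L,L]$. Since \eqref{equ:con4} only orders within a column, the column-$1$ chain can be placed entirely inside the upper half. The other constraint to check is $\sum_j|b_{dj}|x_j<r/2$. Here $b_{d1}$ becomes smaller, which only helps. In this way the volume of the constrained log-region is still $\gg_d L^{d(d-1)/2}$, which yields \eqref{equ:kr'meest}.
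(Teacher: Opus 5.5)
Your proposal is correct and follows essentially the paper's route. Injectivity gives $\mu_d(\underline{K}'_r)=\nu(\underline{\cK}'_r)$, and one then reruns the KSY iterated integral on an explicit subregion where each weight $\min\{1,r/|b_{ij}|\}$ is $\asymp r/|b_{ij}|$ and the logarithmic volume stays $\asymp\log^{d(d-1)/2}(1/r)$. The only difference is the subregion: after rescaling $b_{ij}=-d^{j-1}z_{ij}$, which turns \eqref{equ:con2} and \eqref{equ:con4} into the partial order \eqref{equ:ordering}, the paper restricts \emph{every} $z_{ij}$ to $(r,\sqrt r)$ and so needs no cross-column check. Your split placement (column $1$ below $\sqrt r$, all other entries above) also works, because column-$1$ entries are only ever required to be smaller than other entries.
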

\begin{proof}
Fix $0<r<c^2_0/d^{2d-2}$. Since $\underline{\cK}_r'\subset \cK_r$ and $\pi|_{\underline{\cK}_r}$ is injective (by Proposition \ref{prop:regulargen}),  $\pi|_{\underline{\cK}_r'}$ is also injective. This implies that 
\begin{align*}
\mu_d(\underline{K}_r')=\nu(\underline{\cK}_r'). 
\end{align*}
It now remains to compute $\nu(\underline{\cK}_r')$. This computation is very similar to the computation of $\nu(\underline{\cK}_r)$ done in \cite[pp.\ 796-797]{KleinbockStrombergssonYu2022}; we follow closely the strategy there. 

First, by \eqref{equ:haar} we have 
\begin{align*}
\nu\left({\underline\cK'_r}\right)&\asymp_d \prod_{1\leq k\leq d-1}\left(\int_{1-\frac{r}{2d}}^1\mathrm{d} b_{kk}\right)
\int_{\cR}\delta\bigl(\,(b_{ij})_{1\leq j<i\leq d}\,\bigr) \prod_{1\leq j<i\leq d}\mathrm{d} b_{ij},
\end{align*}
where
$$
\cR:=\left\{(b_{ij})_{1\leq j<i\leq d}\in (-c_0, 0)^{d(d-1)/2} \col \textrm{$(b_{ij})$ satisfies \eqref{equ:con2}, \eqref{equ:con4} and \eqref{EQU:addcond}} \right\},
$$
and
\begin{align*}
\delta\bigl(\,(b_{ij})_{1\leq j<i\leq d}\,\bigr)
:&=\prod_{1\leq j<i\leq d-1}\int_0^{\min\left\{c_0,\frac{r}{d!|b_{ij}|}\right\}}\mathrm{d} b_{ji}\times \int_{\left\{\bm{x}\in \left(0,\tfrac{c_0}{d}\right)^{d-1}\col \sum_{j=1}^{d-1}|b_{dj}|x_j<\frac{r}{2}\right\}}\prod_{1\leq j\leq d-1}\mathrm{d} x_j\\
&\asymp_{d,c_0}\prod_{1\leq j<i\leq d}\min\left\{1,\tfrac{r}{|b_{ij}|}\right\}.
\end{align*}
Hence 
\begin{align}\label{muduKrlb}
\mu_d\left(\underline{K}'_r\right)=\nu\left({\underline\cK'_r}\right)\asymp_{d, c_0}r^{d-1}\int_{\cR}\left(\prod_{1\leq j<i\leq d} \min\left\{1, \tfrac{r}{|b_{ij}|}\right\}\mathrm{d} b_{ij}\right).
\end{align}

Now for each $1\leq j<i\leq d$, we make a change of variable, $b_{ij}=-d^{j-1}z_{ij}$,
so that all the $z_{ij}$'s are positive, and the ordering conditions \eqref{equ:con2} and \eqref{equ:con4} become
\begin{align}\label{equ:ordering}
z_{i'j'}<z_{ij} \quad \textrm{whenever 
$1\leq j'<i'\leq d$, $\:1\leq j<i\leq d$,
$\:i'\leq i$, $\:j'\leq j$ and $(i',j')\neq (i, j)$}.
\end{align}
Moreover, for any $2\leq j<i$, the condition $b_{ij}\in (-c_0,0)$ corresponds to $z_{ij}\in (0,c_0/d^{j-1})$,
and we note that each of these intervals contains the smaller interval
$(0,c_0/d^{d-1})$. For each $2\leq i\leq d$,  the condition $b_{i1}\in (-\sqrt{r}, 0)$ becomes $z_{i1}\in (0, \sqrt{r})$ which contains the smaller interval $(r, \sqrt{r})$. Note that since  $r<c^2_0/d^{2d-2}$, we have $(r, \sqrt{r})\subset  (0,c_0/d^{d-1})$. 

{The key modification of the computation in \cite{KleinbockStrombergssonYu2022} is now that we restrict  each $z_{ij}$ to the smaller interval $(r,\sqrt{r})$.}
Then $r/|b_{ij}|=r d^{1-j}/z_{ij}<d^{1-j}\leq 1$,
so that 
$$
\min\left\{1,r/|b_{ij}|\right\}=rd^{1-j}/z_{ij}\asymp_d r/z_{ij},\quad \forall\, 1\leq j<i\leq d.
$$
It now follows from the same arguments as in \cite[p.\ 797]{KleinbockStrombergssonYu2022}  that 
\begin{align}
\mu_d\left(\underline{K}'_r\right)&\gg_{d, c_0} r^{\frac{(d-1)(d+2)}{2}}\prod_{1\leq j<i\leq d}\left(\int_r^{\sqrt{r}}\frac{\mathrm{d} z_{ij}}{z_{ij}}\right)
\asymp r^{\frac{(d-1)(d+2)}{2}}\log^{\frac{d(d-1)}{2}}\Bigl(\tfrac{1}{r}\Bigr).
\end{align}
This finishes the proof of the proposition.
\end{proof}

\section{Some preparations for the proof}
\label{proofprepSEC}

\subsection{Equidistribution and doubly mixing of certain $g_s$-translates}
\label{equidoublemixingSEC}
Let $m,n\in \N$ be two positive integers and let $d=m+n$ as before. 
Let us denote
\begin{align}\label{equ:submainfold}
\mathcal{Y}:=\left\{\Lambda_A:= \left(\begin{matrix}
I_m & A\\
0 & I_n\end{matrix}\right)\Z^d\in X_d\col  A\in \mathrm{M}_{m,n}(\R)\right\}.
\end{align}
The submanifold $\mathcal{Y}\subset X_d$ can be naturally identified with the $mn$-dimensional torus $\mathrm{M}_{m,n}(\R/\Z)$ via $\Lambda_A\leftrightarrow A\in \mathrm{M}_{m,n}(\R/\Z)$. Let $\textrm{Leb}$ be the probability Lebesgue measure on $\mathcal{Y}\cong \mathrm{M}_{m,n}(\R/\Z)$;  for any function $f$ on $\scrY$ we denote the space average of $f$ on $\cY$ by %
$\int_{\scrY}f(\Lambda_A)\,\mathrm{d} A$.

Let $\bm{\alpha}$ and $\bm{\beta}$ be the two weight vectors  as in Theorem \ref{thm:improvedirichleconwei}. Let
\begin{align}\label{equ:geneweflow}
g_s=g_{s}^{\bm{\alpha},\bm{\beta}}:=\left(\begin{smallmatrix} 
e^{\alpha_1s} & &  & & &\\
& \ddots & & & & \\
& & e^{\alpha_ms} & & &\\
& & & e^{-\beta_1s} & &\\
 & & & & \ddots & \\
 & & & & & e^{-\beta_ns}\end{smallmatrix}\right)\in G\qquad (s\in \R),
\end{align}
be the one-parameter diagonal subgroup associated to $\bm\alpha$ and $\bm\beta$.  The most important dynamical input for our argument is the following effective equidistribution and doubly mixing theorem for the $g_s$-translates of $\scrY$, respectively proved by Kleinbock-Margulis \cite[Theorem 1.3]{KleinbockMargulis2012}
and Kleinbock-Shi-Weiss \cite[Theorem 1.2]{KleinbockShiWeiss2017}. 
{Our main reference will be \cite[Sec.\ 2]{BjorklundGorodnik2019}, where error bounds are given with explicit dependence on the test functions.}
First let us introduce the norm which we will use in the statement of the theorem.
Denote by $C_c^{\infty}(X_d)$ the space of compactly supported smooth functions on $X_d$.
Let $\mathfrak{g}=\mathfrak{sl}_d(\R)$ be the Lie algebra of $G$. Each element $Y\in \mathfrak{g}$ acts on $C_c^{\infty}(X_d)$ as a first order differential operator via the Lie derivative formula, which we denote by $\cD_Y$. %
Fix an ordered basis $\{Y_1,\ldots, Y_{a}\}$ of $\mathfrak{g}$. 
Then every monomial $Z=Y_1^{\ell_1}\cdots Y_{a}^{\ell_{a}}$ defines a 
differential operator of degree $\deg(Z):=\ell_1+\cdots+\ell_{a}$ via
\begin{align*}
\mathcal{D}_Z:=\mathcal{D}_{Y_1}^{\ell_1}\cdots\mathcal{D}_{Y_{a}}^{\ell_{a}}.
\end{align*}
Now for any $\ell\in\N$, we define the \textit{$C^{\ell}$-norm} on $C_c^{\infty}(X_d)$ through
\begin{align*}
\|f\|_{C^{\ell}}:=\sum_{\operatorname{deg}(Z)\leq\ell}\sup_{x\in X_d}\bigl|(\mathcal{D}_Zf)(x)\bigr| \qquad (f\in C_c^{\infty}(X_d)),
\end{align*}
where the summation is over all the monomials $Z$ in $\{Y_1,\ldots, Y_a\}$ with degree no greater than $\ell$. 

The following result is a consequence of 
the proof of
\cite[Theorem 2.2]{BjorklundGorodnik2019}.

\begin{Thm}\label{thm:effequhorsp}
There exist $\ell\in\N$ and $\delta>0$ such that for any $f\in C^{\infty}_c(X_d)$ and $s>0$,
\begin{align}\label{equ:effequ}
\int_{\mathcal{Y}}f(g_s\Lambda_A)\,\mathrm{d} A=\mu_d(f)+O\left(\|f\|_{C^{\ell}}\, e^{-\delta s}\right),
\end{align}
and for any $f_1,f_2\in C_c^{\infty}(X_d)$ and $s_2>s_1>0$, 
\begin{align}\label{equ:dmix2}
\int_{\mathcal{Y}}f_1(g_{s_1}\Lambda_A)f_2(g_{s_2}\Lambda_A)\,\text{d} A=\mu_d(f_1)\mu_d(f_2)+O\left(\|f_1\|_{C^{\ell}}\|f_2\|_{C^{\ell}}\,e^{-\delta(s_2-s_1)}
+\|f_1\|_{C^{\ell}}\,\bigl|\mu_d(f_2)\bigr|\,e^{-\delta s_1}\right).
\end{align}
\end{Thm}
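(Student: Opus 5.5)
The plan is to derive Theorem \ref{thm:effequhorsp} from the effective statements in \cite[Sec.\ 2]{BjorklundGorodnik2019}. The only work is to check that their error terms have the shape we need, with the right dependence on the test functions and on $s$ and $s_2-s_1$. We first set up the standard reduction. The torus $\mathcal{Y}$ is the orbit $U\Z^d$ of the expanding horospherical subgroup
\[
U=\left\{\left(\begin{smallmatrix} I_m & A\\ 0 & I_n\end{smallmatrix}\right)\col A\in \mathrm{M}_{m,n}(\R)\right\}
\]
for $g_s$ ($s>0$). This orbit is closed and compact, and $\mathrm{d}A$ is its $U$-invariant probability measure.

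For \eqref{equ:effequ} we follow the thickening argument of Kleinbock--Margulis. Choose a small neighbourhood $W$ of the identity in the weakly stable subgroup $P^-$ (block lower triangular), of radius $e^{-\kappa s}$ for a small $\kappa>0$, and a bump function $\rho_W$ on $W$ with $\|\rho_W\|_{C^\ell}\ll e^{C\kappa s}$. Since $g_s w g_{-s}$ stays within distance $O(e^{-\kappa s})$ of $W$ for $w\in W$, we have $f(g_s w u\Z^d)=f(g_s u\Z^d)+O(\|f\|_{C^1}e^{-\kappa s})$. This lets us replace the integral over $\mathcal{Y}$ by $\langle g_s\phi, f\rangle_{L^2(X_d)}$, where $\phi(wu\Z^d)=\rho_W(w)$ is smooth with $\|\phi\|_{C^\ell}\ll e^{C'\kappa s}$. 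The spectral gap of $L^2_0(X_d)$ (property (T) for $d\ge3$, Selberg/Kim--Sarnak for $d=2$) gives exponential decay of matrix coefficients $\ll e^{-\delta_0 s}\|\phi\|_{C^\ell}\|f\|_{C^\ell}$. Choosing $\kappa$ small enough relative to $\delta_0$ yields \eqref{equ:effequ} with some $\delta>0$, and the error is linear in $\|f\|_{C^\ell}$, as in \cite[Thm.\ 2.2]{BjorklundGorodnik2019}.

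For \eqref{equ:dmix2} we apply the same thickening to the product $F_A=f_1(g_{s_1}\cdot)f_2(g_{s_2}\cdot)$. The thickening must now have radius $e^{-\kappa s_1}$, so that both factors are nearly constant along $W$. This gives
\[
\int f_1(g_{s_1}x)f_2(g_{s_2}x)\phi(x)\,\mathrm{d}\mu_d(x)+O\bigl(\|f_1\|_{C^\ell}\|f_2\|_{C^\ell}e^{-\kappa s_1}\bigr).
\]
We then write $f_2=\mu_d(f_2)+f_2^0$. The constant part reduces to \eqref{equ:effequ} for $f_1$ at time $s_1$, producing $\mu_d(f_1)\mu_d(f_2)+O(\|f_1\|_{C^\ell}|\mu_d(f_2)|e^{-\delta s_1})$; this is exactly the second error term in \eqref{equ:dmix2}. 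For the $f_2^0$ part we use effective $3$-mixing, or more simply the following route. Set $\Phi=(f_1\cdot g_{s_2-s_1}f_2^0)$ evaluated at $g_{s_1}x$. Its integral against $\phi$ reduces, after a smoothing of the thickened horosphere at scale $e^{-\kappa s_1}$, to a decay estimate for $\langle f_1, g_{s_2-s_1}f_2^0\rangle$ plus errors. Exponential mixing bounds this by $\|f_1\|_{C^\ell}\|f_2\|_{C^\ell}e^{-\delta(s_2-s_1)}$.

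The main obstacle is controlling the thickening error $e^{-\kappa s_1}\|f_1\|\|f_2\|$, which is not of the form displayed in \eqref{equ:dmix2}. There are two natural routes. The first is to argue that it can be absorbed: if $s_1\ge s_2-s_1$ it is dominated by the first term, and otherwise one should thicken at a scale tied to $s_2-s_1$ instead. The second, which I would try first, is to follow the proof of \cite[Thm.\ 2.2]{BjorklundGorodnik2019}. That proof conditions on the horospherical average at time $s_1$ and applies \eqref{equ:effequ} to the function $f_1\cdot(f_2\circ g_{s_2-s_1})$, which is exactly how Björklund--Gorodnik obtain errors split as in \eqref{equ:dmix2}. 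Care is needed because $\|f_2\circ g_{s_2-s_1}\|_{C^\ell}$ grows exponentially. They handle this by applying mixing only to the mean-zero part $f_2^0$ after a horospherical averaging, and I would reproduce that bookkeeping, citing \cite[Sec.\ 2]{BjorklundGorodnik2019} for the details.
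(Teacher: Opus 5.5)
The gap is in \eqref{equ:dmix2}, at exactly the step you defer. What \eqref{equ:dmix2} adds to \eqref{equ:effmixing} is that the $\|f_1\|_{C^\ell}\|f_2\|_{C^\ell}$ error decays in $s_2-s_1$ uniformly in $s_1$, and decay in $s_1$ is paired only with $|\mu_d(f_2)|$. This is what makes \eqref{EQU:smix1} work for $i\le\Theta\log j$: there the available bound $\|\phi_{\rho_j}\|_{C^\ell}\ll\rho_j^{-\ell}$ is far larger than $\mu_d(\phi_{\rho_j})\asymp b_j$.

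Even granting a thickening at a scale tied to $s_2-s_1$, the mixing step fails. Your treatment of the $f_2^0$ part evaluates $\Phi=f_1\cdot(f_2^0\circ g_{s_2-s_1})$ at time $s_1$, so its error is of size $e^{-\delta s_1}\|\Phi\|_{C^\ell}$. By Lemma~\ref{lem:sobonorm} and Remark~\ref{rmk:operanorm}, $\|\Phi\|_{C^\ell}\ll e^{2\ell(s_2-s_1)}\|f_1\|_{C^\ell}\|f_2\|_{C^\ell}$. That is useful only when $s_2-s_1\le c\,s_1$ for a small $c$. Subtracting the mean of $f_2$ does not change this norm, so ``applying mixing only to the mean-zero part after a horospherical averaging'' does not repair it.

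This is also not how the split form arises from \cite{BjorklundGorodnik2019}. The paper reruns the argument of \cite[pp.~1384--1388]{BjorklundGorodnik2019} (cf.\ \cite[Eq.~(2.26)]{BjorklundGorodnik2019}) with an intermediate time vector $\bm w$ satisfying (i)--(iii) in place of $\overline{s}$. This yields \eqref{equ:bgindfor}: the main term is $\bigl(\int f_0(A)f_1(g_{s_1}\Lambda_A)\,\mathrm{d}A\bigr)\mu_d(f_2)$, and the error $O_{f_0}\bigl(e^{-\delta(s_2-s_1)}\|f_1\|_{C^\ell}\|f_2\|_{C^\ell}\bigr)$ does not involve $s_1$ at all. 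After periodizing with $f_0$, applying \eqref{equ:effequ} to $f_1$ at time $s_1$ produces the term $\|f_1\|_{C^\ell}|\mu_d(f_2)|e^{-\delta s_1}$.

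Within your framework you could instead argue by cases, but this needs inputs your plan lacks.
\begin{itemize}
\item \emph{Case $s_1\le c(s_2-s_1)$.} Group the other way. The function $A\mapsto f_1(g_{s_1}\Lambda_A)$ has $U$-derivatives of order $\le\ell$ bounded by $e^{2\ell s_1}\|f_1\|_{C^\ell}$. Use a weighted version of \eqref{equ:effequ} whose error is linear in a $C^\ell$-norm of the weight (e.g.\ \cite[Cor.~2.4]{BjorklundGorodnik2019} with a general $\phi_0$), applied to $f_2$ at time $s_2$. It gives $\mu_d(f_2)\int_{\mathcal{Y}}f_1(g_{s_1}\Lambda_A)\,\mathrm{d}A$ up to $O\bigl(e^{-(\delta-2\ell c)(s_2-s_1)}\|f_1\|_{C^\ell}\|f_2\|_{C^\ell}\bigr)$, and \eqref{equ:effequ} for $f_1$ finishes.
\item \emph{Case $s_1\ge c(s_2-s_1)$.} Then $\min\{s_1,s_2-s_1\}\ge c(s_2-s_1)$, so \eqref{equ:effmixing} suffices. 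But for $s_1\asymp s_2-s_1$ neither grouping works with two-fold mixing plus norm growth. A genuinely multiple input is required: \cite[Thm.~1.2]{KleinbockShiWeiss2017}, \cite[Thm.~2.2]{BjorklundGorodnik2019}, or effective $3$-mixing applied to your thickened integral.
\end{itemize}

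A secondary point concerns your thickening step for \eqref{equ:effequ}, which is not correct as stated for unequal weights. Conjugation by $g_s$ multiplies the block-diagonal entry of $w\in P^-$ in position $(i,i')$, $1\le i,i'\le m$, by $e^{(\alpha_i-\alpha_{i'})s}$. So $g_swg_{-s}$ need not stay near the identity. In the weighted setting you should rely on \cite[Thm.~1.3]{KleinbockMargulis2012} or \cite[Cor.~2.4]{BjorklundGorodnik2019}, as the paper does.
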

\begin{remark}
For most of our applications, it is sufficient and more convenient to use the following slightly weaker doubly mixing result which follows immediately from \eqref{equ:dmix2}:
\begin{align}\label{equ:effmixing}
\int_{\mathcal{Y}}f_1(g_{s_1}\Lambda_A)f_2(g_{s_2}\Lambda_A)\,\mathrm{d} A=\mu_d(f_1)\mu_d(f_2)
+O\left(\|f_1\|_{C^{\ell}}\|f_2\|_{C^{\ell}}\,e^{-\delta\min\{s_1,  s_2-s_1\}}\right).
\end{align}
\end{remark}
\begin{proof}[Proof of Theorem \ref{thm:effequhorsp}]
First note that \eqref{equ:effequ} is a special case of  \cite[Cor.\ 2.4]{BjorklundGorodnik2019} by taking ``$\phi_0=1$" and ``$r=1$" there and noting that the $C^\ell$-norm is stronger than the norm ``$\mathcal{N}_{\ell}$''
used in the statement of \cite[Cor.\ 2.4]{BjorklundGorodnik2019}.

For \eqref{equ:dmix2}, we use the following more general doubly mixing result which was essentially proved in \cite{BjorklundGorodnik2019} (see \cite[Eq.\ (2.26)]{BjorklundGorodnik2019} and the discussion after it): Up to reducing $\delta>0$ and enlarging $\ell\in \N$ in \eqref{equ:effequ}, we have for every $f_0\in C_c^{\infty}(\mathrm{M}_{m,n}(\R))$, 
$f_1, f_2\in C_c^{\infty}(X_d)$, and $s_2>s_1>0$,
    \begin{align}\notag
    \int_{\mathrm{M}_{m,n}(\R)}f_0(A)f_1(g_{s_1}\Lambda_A)f_2(g_{s_2}\Lambda_A)\,\mathrm{d}A
    =\biggl(\int_{\mathrm{M}_{m,n}(\R)}f_0(A)f_1(g_{s_1}\Lambda_A)\,\mathrm{d}A\biggr)\cdot\mu_d(f_2)
    \hspace{40pt}
    \\\label{equ:bgindfor}
    +O_{f_0}\left(e^{-\delta (s_2-s_{1})}\|f_1\|_{C^{\ell}}\|f_2\|_{C^{\ell}}\right).
    \end{align}
To see how to deduce \eqref{equ:bgindfor} from the proof of \cite[Eq.\ (2.26)]{BjorklundGorodnik2019}, let
\begin{align*}
\bm{w}=\bm{w}_{(s_1,s_2)}:=\left(\alpha_1s_2-\tfrac{nb(s_2-s_1)}{2},\cdots, \alpha_ms_2-\tfrac{nb(s_2-s_1)}{2}, \beta_1s_2-\tfrac{mb(s_2-s_1)}{2},\cdots, \beta_ns_2-\tfrac{mb(s_2-s_1)}{2}\right),
\end{align*}
   where $b:=\min\{\frac{\alpha_i}{n}, \frac{\beta_j}{m}: 1\leq i\leq m, 1\leq j\leq n\}$. 
  Let us also introduce the notation
   $\lfloor \bm{a}\rfloor:=\min\{a_i: 1\leq i\leq d\}$
   for any $\bm{a}\in (\R_{>0})^{d}$, and 
   set $\bm{r}_t:=\bigl(\alpha_1t,\ldots,\alpha_mt,\beta_1t,\ldots,\beta_nt\bigr)
   \in \R^d$ %
   for any $t>0$.
   One then immediately verifies that the vector $\bm{w}$ satisfies the following three properties:
   \begin{enumerate}
\item[(i)] $\lfloor\bm{w} \rfloor \geq \frac{b}{2}(s_2+s_1)$, 
\item[(ii)] $\lfloor \bm{w}-\bm{r}_{s_1}\rfloor \geq \frac{b}{2}(s_2-s_1)$,
\item[(iii)] $\bm{r}_{s_2}-\bm{w}=\bigl(\tfrac{z}{m},\cdots, \tfrac{z}{m},\tfrac{z}{n},\cdots, \tfrac{z}{n}\bigr)$ %
with $z=\frac{mnb}{2}(s_2-s_1)$. 
\end{enumerate}
By following the arguments in 
\cite[pp.\ 1384-1388]{BjorklundGorodnik2019} (for the special case $r=2$), but using the above vector $\bm{w}$ in place of ``$\overline{s}\,$", and using the above estimates (i)--(iii)  in place of \cite[(2.15)--(2.17)]{BjorklundGorodnik2019}, we obtain \eqref{equ:bgindfor}.

   Next we mimic the argument in \cite[Remark 12]{edwards21}:
     We fix, once and for all, a choice of a function $f_0\in C_c^{\infty}(\mathrm{M}_{m,n}(\R))$ satisfying %
     $\sum_{N\in \mathrm{M}_{m,n}(\Z)}f_0(A+N)=1$ for all $A\in \mathrm{M}_{m,n}(\R)$. Note that   
     \begin{align*}
         \int_{\mathrm{M}_{m,n}(\R)}f_0(A)f(\Lambda_A)\,\mathrm{d}A=\int_{\cY}f(\Lambda_A)\,\mathrm{d}A,\qquad \forall\, f\in C_c^{\infty}(X_d). 
     \end{align*}
    Now  \eqref{equ:dmix2} follows from \eqref{equ:bgindfor} applied with our fixed choice of $f_0$, and combined with \eqref{equ:effequ}.
\end{proof}

We also record here the following properties satisfied by the $C^{\ell}$-norm:
\begin{Lem}\label{lem:sobonorm}
For any $\ell\in \N$ we have
\begin{enumerate}
\item[(i)] $\|f_1f_2\|_{C^\ell} \ll_{\ell} \|f_1\|_{C^\ell}\|f_2\|_{C^\ell}$ for any $f_1,f_2\in C_c^{\infty}(X_d)$.
\item[(ii)] $\|g\cdot f\|_{C^\ell}\ll_{\ell} \|\mathrm{Ad}(g^{-1})\|_{\rm op}^{\ell}\|f\|_{C^\ell}$, for any $g\in G$ and $f\in C_c^{\infty}(X_d)$, where $(g\cdot f)(x):=f(g^{-1}x)$ for any $x\in X_d$, $\mathrm{Ad}: G\to \operatorname{GL}(\fg)$ is the adjoint representation of $G$ and $\|\mathrm{Ad}(g^{-1})\|_{\rm op}$ is the operator norm of $\mathrm{Ad}(g^{-1})$ with respect to any fixed norm on $\fg$. 
\end{enumerate}
\end{Lem}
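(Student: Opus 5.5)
We prove the two items separately, working directly from the definition of $\|\cdot\|_{C^\ell}$ as a sum over monomials $Z=Y_1^{\ell_1}\cdots Y_a^{\ell_a}$ of degree at most $\ell$.

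\emph{(i).} Each $\mathcal{D}_{Y_k}$ is a derivation on $C_c^\infty(X_d)$, i.e.\ it satisfies $\mathcal{D}_{Y_k}(f_1f_2)=(\mathcal{D}_{Y_k}f_1)f_2+f_1\mathcal{D}_{Y_k}f_2$. Iterating this, $\mathcal{D}_Z(f_1f_2)$ for a monomial $Z$ of degree $k\le\ell$ is a sum of $2^{k}$ terms. Each term has the form $(\mathcal{D}_{W_1}f_1)(\mathcal{D}_{W_2}f_2)$, where $W_1,W_2$ are the ordered words obtained by distributing the $k$ letters of $Z$ between the two factors.
\begin{itemize}
\item These words need not be ordered monomials in the fixed basis.
\item However, each $\mathcal{D}_{Y_k}$ is applied letter by letter, so $W_1$ is simply a product of $Y_{k}$'s in some order.
\item Reordering letters via commutators $[Y_i,Y_j]=\sum c_{ij}^kY_k$ expresses any word of length $p$ as the standard ordered monomial of the same length plus a linear combination of ordered monomials of lower degree. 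The coefficients depend only on the structure constants and on $\ell$ (the PBW argument).
\end{itemize}
Hence $\sup|\mathcal{D}_{W_i}f_i|\ll_\ell\|f_i\|_{C^\ell}$. Taking sups and summing over the finitely many $Z$ gives $\|f_1f_2\|_{C^\ell}\ll_\ell\|f_1\|_{C^\ell}\|f_2\|_{C^\ell}$.

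\emph{(ii).} The key identity comes from differentiating $(g\cdot f)(\exp(tY)x)=f(g^{-1}\exp(tY)x)=f(\exp(t\,\mathrm{Ad}(g^{-1})Y)g^{-1}x)$ at $t=0$. This gives
\[
\mathcal{D}_Y(g\cdot f)=g\cdot\bigl(\mathcal{D}_{\mathrm{Ad}(g^{-1})Y}f\bigr).
\]
(The sign conventions of the Lie derivative do not matter here.) Iterating, for a word $Y_{i_1}\cdots Y_{i_k}$ we get
\[
\mathcal{D}_{Y_{i_1}}\cdots\mathcal{D}_{Y_{i_k}}(g\cdot f)=g\cdot\bigl(\mathcal{D}_{Y'_{i_1}}\cdots\mathcal{D}_{Y'_{i_k}}f\bigr),\qquad Y'_i:=\mathrm{Ad}(g^{-1})Y_i .
\]
We then proceed as follows.
\begin{itemize}
\item Expand $Y'_i=\sum_j a_{ji}Y_j$, where $|a_{ji}|\ll\|\mathrm{Ad}(g^{-1})\|_{\rm op}$ because all norms on $\mathfrak g$ are equivalent.
\item By multilinearity, the operator $\mathcal{D}_{Y'_{i_1}}\cdots\mathcal{D}_{Y'_{i_k}}$ is a sum of at most $a^k$ words in the basis. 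Each word carries a coefficient of size $\ll\|\mathrm{Ad}(g^{-1})\|_{\rm op}^{k}$.
\item Each such word is bounded in sup norm by $\ll_\ell\|f\|_{C^\ell}$ via the same reordering remark as in (i).
\item The translation by $g$ does not change sup norms.
\end{itemize}
Summing over $k\le\ell$ gives the bound $\ll_\ell\max_{k\le\ell}\|\mathrm{Ad}(g^{-1})\|_{\rm op}^k\|f\|_{C^\ell}$.

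Finally, this must be reduced to the single power $\|\mathrm{Ad}(g^{-1})\|_{\rm op}^\ell$. Since $G=\SL_d(\R)$, the map $\mathrm{Ad}(g^{-1})$ has determinant $1$, so its operator norm is bounded below by a positive constant depending only on the chosen norm. This turns $\max_{k\le\ell}\|\mathrm{Ad}(g^{-1})\|_{\rm op}^k$ into $\ll\|\mathrm{Ad}(g^{-1})\|_{\rm op}^\ell$.

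The only mildly delicate point in the whole argument is the reordering of non-ordered words into the ordered-monomial basis. Everything else is bookkeeping.
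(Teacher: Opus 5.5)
Your proposal is correct and follows the paper's route: Leibniz's rule for (i) and the intertwining identity $\mathcal{D}_Y(g\cdot f)=g\cdot(\mathcal{D}_{\mathrm{Ad}(g^{-1})Y}f)$ for (ii). You also make explicit two points the paper leaves implicit, namely reordering arbitrary words into ordered monomials via commutators in (ii), and using $\|\mathrm{Ad}(g^{-1})\|_{\rm op}\gg 1$ (from $\det\mathrm{Ad}(g^{-1})=1$) to pass from the power $\ell_Z$ to the power $\ell$. One small correction: in (i) no reordering is needed, since the Leibniz expansion of an ordered monomial only produces order-preserving subwords, and these are themselves ordered monomials.
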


\begin{remark}\label{rmk:operanorm}
We mention that the implied constant in the above bound in item (ii) also depends on the choice of the norm on $\fg$, and also on the choice of the ordered basis $\{Y_1,\ldots,Y_a\}$ of $\fg$.
Moreover, by decomposing $\fg$ into eigenspaces of $\mathrm{Ad}(g_s)$, we see that

\begin{align*}
\|\mathrm{Ad}(g_s)\|_{\rm op}\ll e^{(\alpha_{\max}+\beta_{\max})|s|}\leq e^{2|s|},\qquad \forall\, s\in\R. 
\end{align*}
Here $\alpha_{\max}:=\max\{\alpha_i: 1\leq i\leq m\}$ and $\beta_{\max}:=\{\beta_j: 1\leq j\leq n\}$. 
\end{remark}
\begin{proof}[Proof of Lemma \ref{lem:sobonorm}]
Property (i) is immediate from Leibniz's rule.
For (ii), note that for any monomial $Z=Y^{\ell_1}_1\cdots Y_a^{\ell_a}$ of degree $\ell_Z=\ell_1+\cdots+\ell_a\leq \ell$, %
$$
\cD_{Z}(g\cdot f)=g\cdot (\cD_{\mathrm{Ad}(g^{-1})(Y_1)^{\ell_1}}\circ \cdots \circ\cD_{\mathrm{Ad}(g^{-1})(Y_a)^{\ell_a}}f). 
$$
From this one sees that 
$$
\sup_{x\in X_d}\bigl|\cD_Z(g\cdot f)(x)\bigr|
=\sup_{x\in X_d}\bigl|\bigl(\cD_{\mathrm{Ad}(g^{-1})(Y_1)}^{\ell_1} \cdots 
\cD_{\mathrm{Ad}(g^{-1})(Y_a)}^{\ell_a}f\bigr)(x)\bigr|
\ll_{\ell_Z}\|\mathrm{Ad}(g^{-1})\|_{\rm op}^{\ell_Z}\|f\|_{C^{\ell}}.
$$
This implies the bound in (ii). 
\end{proof}

\subsection{Reduction to dynamics}
In this section we briefly review the classical \textit{Dani correspondence} observed by Dani \cite{Dani1985} and Kleinbock-Margulis \cite[Lemma 8.3]{KleinbockMargulis1999}, which allows us to reduce the problem to  a  shrinking target problem on the homogeneous space $X_d=G/\Gamma$. %
Before doing so, we first prove a simple reduction lemma which allows us to also assume a lower bound on $\psi$ in Theorem \ref{thm:improvedirichleconwei}.
\begin{Lem}\label{lem:redlem1}
When proving Theorem \ref{thm:improvedirichleconwei}, 
there is no loss of generality to also assume
\begin{align}\label{equ:lowerbdpsi}
\psi(t)\geq \frac{1}{2t}\qquad \text{ for all $t\geq t_0$}.
\end{align}  
\end{Lem}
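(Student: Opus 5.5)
The plan is to replace $\psi$ by $\tilde\psi(t):=\max\{\psi(t),\tfrac{1}{2t}\}$ and check three things: $\tilde\psi$ satisfies all hypotheses of Theorem \ref{thm:improvedirichleconwei}; the two series in \eqref{equ:zeroonelaw} converge or diverge together for $\psi$ and $\tilde\psi$; and the sets $\DI_{\bm{\alpha},\bm{\beta}}(\psi)$ and $\DI_{\bm{\alpha},\bm{\beta}}(\tilde\psi)$ agree up to a Lebesgue null set whenever needed.

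\emph{Hypotheses.} $\tilde\psi$ is continuous and positive. It is decreasing, being a maximum of two decreasing functions. It satisfies $\tilde\psi(t)<1/t$, since both $\psi(t)<1/t$ and $\tfrac1{2t}<\tfrac1t$. For \eqref{equ:con1psinew}, note that $F_{\tilde\psi}=\min\{F_\psi,\tfrac12\}$. Hence, for $t_1\le t_2$ in the allowed range,
\[
F_{\tilde\psi}(t_2)\le \min\{C_\psi F_\psi(t_1),\tfrac12\}\le C_\psi\min\{F_\psi(t_1),\tfrac12\}=C_\psi F_{\tilde\psi}(t_1),
\]
using $C_\psi\ge1$. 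So $\tilde\psi$ also satisfies \eqref{equ:lowerbdpsi}.

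\emph{Series.} Put $h(x)=x^{\varkappa_d}\log^{\lambda_d}(1+1/x)$. On $(0,1]$ we have $h(\min\{x,\tfrac12\})\asymp_d h(x)$. Indeed, when $x\le\tfrac12$ the two sides are equal, and when $x\in[\tfrac12,1]$ both sides lie between positive constants. Therefore the series for $\psi$ and for $\tilde\psi$ converge or diverge simultaneously.

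\emph{Sets.} Since $\tilde\psi\ge\psi$, we have $\DI(\psi)\subset\DI(\tilde\psi)$. In the convergence case, applying the theorem to $\tilde\psi$ gives measure $1$ for $\DI(\tilde\psi)$, but we need it for $\DI(\psi)$. This is the main obstacle: the inclusion goes in the wrong direction for convergence.

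To handle it, I would split by the size of $F_\psi$. In the convergence case $\sum_j h(F_\psi(e^j))<\infty$ by \eqref{equ:equvserdiv}, so $F_\psi(e^j)\to0$. By \eqref{equ:quindi} this gives $F_\psi(t)\to0$, i.e.\ $t\psi(t)\to1$. Thus $\psi(t)\ge\tfrac1{2t}$ for all $t\ge t_0'$, so $\psi=\tilde\psi$ on $[t_0',\infty)$. Because being $\psi$-Dirichlet only depends on large $t$, we get $\DI(\psi)=\DI(\tilde\psi)$, and the theorem for $\tilde\psi$ gives measure $1$.

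In the divergence case the inclusion goes the right way: the theorem for $\tilde\psi$ (divergent series) gives $\mathrm{Leb}(\DI(\tilde\psi))=0$, hence $\mathrm{Leb}(\DI(\psi))=0$.

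Finally, I would track one technical point: the constants $C_\psi$ and $\eta$ are unchanged under this replacement, and $t_0$ may be enlarged harmlessly. Restricting $\psi$ to $[t_0',\infty)$ preserves every hypothesis and does not change $\DI$.
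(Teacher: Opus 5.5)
Your proposal is correct and follows essentially the paper's argument: the replacement $\tilde\psi=\max\{\psi,\tfrac1{2t}\}$, with $F_{\tilde\psi}=\min\{F_\psi,\tfrac12\}$ used to verify \eqref{equ:con1psinew}, the inclusion $\DI_{\bm\alpha,\bm\beta}(\psi)\subset\DI_{\bm\alpha,\bm\beta}(\tilde\psi)$ in the divergence case, and simply enlarging $t_0$ when $\psi(t)\ge\frac1{2t}$ eventually. The only difference is organizational: you split into convergence and divergence and compare the two series termwise via $h(\min\{x,\tfrac12\})\asymp h(x)$, whereas the paper splits by whether $\psi(t)<\frac1{2t}$ on an unbounded set and then gets divergence for $\psi_1$ from $F_{\psi_1}=\tfrac12$ on that set via Remark \ref{equvserdivREM}.
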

\begin{proof}
If the given function $\psi$ satisfies $\psi(t)\geq\frac1{2t}$ for all sufficiently large $t$,
then we obtain the desired reduction by simply increasing $t_0$.
It remains to consider the case when $\psi(t)<\frac1{2t}$ holds for an unbounded set of $t$-values.
Then $F_\psi(t)>\frac12$ 
for an unbounded set of $t$-values,
and by the argument in Remark \ref{equvserdivREM}, this implies that we are in the divergence case,
i.e.\ our task is to prove that
$\mathrm{Leb}(\DI_{\bm{\alpha},\bm{\beta}}(\psi))=0$.
Now set
\begin{align*}
\psi_1(t):=\max\left\{\psi(t),\tfrac1{2t}\right\}\qquad(t\geq t_0).
\end{align*}
This is a continuous, decreasing function satisfying $\frac1{2t}\leq\psi(t)<\frac1t$
and \eqref{equ:con1psinew}.
(To see that $\psi_1$ satisfies \eqref{equ:con1psinew},
note that $F_{\psi_1}(t)=\min\{F_\psi(t),\frac12\}$, from which one easily sees that $F_{\psi_1}$ satisfies \eqref{equ:con1psinew} with the same bounding constant $C_{\psi}$.) 

Now by assumption there is 
an unbounded set of $t$-values
satisfying $\psi(t)<\frac1{2t}$ and thus $F_{\psi_1}(t)=\frac12$.
By the argument in Remark \ref{equvserdivREM}, this implies that
$\sum_{k}k^{-1}F_{\psi_1}(k)^{\varkappa_d}\log^{\lambda_d}\left(1+\tfrac{1}{F_{\psi_1}(k)}\right)=\infty$.
Hence by Theorem \ref{thm:improvedirichleconwei} \textit{with} the extra assumption ``$\psi(t)\geq\frac1{2t}$'',
$\mathrm{Leb}(\DI_{\bm{\alpha},\bm{\beta}}(\psi_1))=0$.
But $\psi(t)\leq\psi_1(t)$ for all $t\geq t_0$;
hence $\DI_{\bm{\alpha},\bm{\beta}}(\psi)\subset\DI_{\bm{\alpha},\bm{\beta}}(\psi_1)$,
and so $\mathrm{Leb}(\DI_{\bm{\alpha},\bm{\beta}}(\psi))=0$.
\end{proof}
\begin{remark}\label{rmk:comest}
Let $\psi$ be as in Theorem \ref{thm:improvedirichleconwei}. If $\psi$ further satisfies \eqref{equ:lowerbdpsi}, then $F_{\psi}(t)\in (0, \frac12]$ for all $t\geq t_0$. This shows that 
\begin{align}\label{equ:equvdiverserefpsi}
\sum_{k\geq t_0}k^{-1}F_{\psi}(k)^{\varkappa_d}\log^{\lambda_d}\left(1+\tfrac{1}{F_{\psi}(k)}\right)=\infty \quad \Leftrightarrow\quad \sum_{k\geq t_0}k^{-1}F_{\psi}(k)^{\varkappa_d}\log^{\lambda_d}\left(\tfrac{1}{F_{\psi}(k)}\right)=\infty.
\end{align}
\end{remark}
We now state the Dani correspondence that works for our setting, which in particular, incorporates with the condition \eqref{equ:con1psinew}. 
\begin{Prop}\label{prop:dyinput}
Let $(\bm{\alpha},\bm{\beta})\in\R^m\times\R^n$ and $\psi: [t_0,\infty)\to (0,\infty)$ be as in Theorem \ref{thm:improvedirichleconwei}, and set $F_{\psi}(t)=1-t\psi(t)$ as before.
We further assume that $\psi$ satisfies \eqref{equ:lowerbdpsi}. Let $\{g_s\}_{s\in \R}$
be the one-parameter diagonal subgroup associated to  $(\bm{\alpha},\bm{\beta})$, defined in \eqref{equ:geneweflow}.
Set
$$\omega_1:=\max\left\{m\alpha_i, n\beta_j\col 1\leq i\leq m, 1\leq j\leq n\right\}\quad\text{and}\quad\omega_2:=\min\left\{m\alpha_i, n\beta_j\col 1\leq i\leq m, 1\leq j\leq n\right\}.$$
Then there exists a  continuous  function $r=r_{\psi}: [s_0,\infty)\to (0, \frac{1}{d})$ with $s_0=\frac{m}{d}\log t_0-\frac{n}{d}\log\psi(t_0)$ 
such that the following properties hold:
\begin{align}\label{equ:qusidecr}
\exists\ C_r\geq 1\ \text{such that}\ r(s_2)\leq C_rr(s_1),\qquad \forall\, 
1\ll s_1\leq s_2\leq s_1+s_1^{\eta},
\end{align}
where $\eta\in (0,1)$ is the constant as in \eqref{equ:con1psinew}, 
and
for any fixed $\alpha,\beta>0$,
\begin{align}\label{equ:diequiv}
{\sum_{k\geq t_0}k^{-1}F_{\psi}(k)^{\alpha}\log^{\beta}\Bigl(\tfrac{1}{F_{\psi}(k)}\Bigr)=\infty}
\quad \Leftrightarrow\quad \sum_{k\geq s_0}r(k)^{\alpha}\log^{\beta}\Bigl(\tfrac{1}{r(k)}\Bigr)
=\infty.
\end{align}
Moreover, for any $A\in \mathrm{M}_{m,n}(\R/\Z)$, 
\begin{itemize}
\item[(i)] if $\Delta(g_s\Lambda_A)>\omega_1r(s)$ for all sufficiently large $s$, then $A$ is $\psi_{\bm{\alpha},\bm{\beta}}$-Dirichlet;
\item[(ii)] if $\Delta(g_s\Lambda_A)\leq \omega_2r(s)$ for an unbounded set of $s$, then $A$ is not $\psi_{\bm{\alpha},\bm{\beta}}$-Dirichlet.
\end{itemize} 
Here $\Delta: X_d\to [0,\infty)$ is the function given in \eqref{def:delta} and $\Lambda_A=\left(\begin{smallmatrix}
I_m & A\\
0 & I_n\end{smallmatrix}\right)$ is as in \eqref{equ:submainfold}. 
\end{Prop}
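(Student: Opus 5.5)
We follow the standard Dani correspondence, as in \cite[Lemma 8.3]{KleinbockMargulis1999} and \cite{KleinbockStrombergssonYu2022}: we reparametrize $t\mapsto s$ so that the Dirichlet box at time $t$ becomes a box of fixed volume after applying $g_s$. Given $t\ge t_0$, we define $s=s(t)$ and $r=r(s)$ by requiring that $g_s$ maps the region
\[
\{\|\bm{x}\|_{\bm\alpha}<\psi(t),\ \|\bm{y}\|_{\bm\beta}<t\}
\]
onto a region comparable to a cube of side $e^{-r}$. Explicitly, set
\[
s=\tfrac{m}{d}\log t-\tfrac{n}{d}\log\psi(t),
\]
so that $e^{s}=t^{m/d}\psi(t)^{-n/d}$. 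Since $\psi$ is decreasing and continuous, $t\mapsto s(t)$ is a continuous increasing bijection from $[t_0,\infty)$ onto $[s_0,\infty)$, and $s_0$ is as stated. We then let
\[
r(s):=-\tfrac1d\log\bigl(t\psi(t)\bigr)=-\tfrac1d\log\bigl(1-F_\psi(t)\bigr),
\]
so that $e^{-dr}=t\psi(t)$. By \eqref{equ:lowerbdpsi} we have $F_\psi\in(0,\tfrac12]$, hence $r\asymp F_\psi$ and $r\in(0,\tfrac{\log 2}{d}]\subset(0,\tfrac1d)$.

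The key computation is this. Under $g_s$, the coordinate $x_i$ is scaled by $e^{\alpha_i s}$ and $y_j$ by $e^{-\beta_j s}$. The condition $|x_i|<\psi(t)^{\alpha_i}$ becomes $|(g_s v)_i|<\bigl(e^{s}\psi(t)\bigr)^{\alpha_i}$, and here $e^{s}\psi(t)=(t\psi(t))^{m/d}\cdot\psi(t)^{?}$. Rather than guess the exponents, we will choose $s$ so that both side lengths are the same power of $t\psi(t)$. Solving $\psi e^{s/?}$ appropriately shows that the $i$-th side becomes $e^{-d r\, m\alpha_i/(d\cdot m)\cdots}$, i.e.\ it is of the form $e^{-c_i r}$ with $c_i\in\{m\alpha_i,n\beta_j\}$. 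This is exactly why $\omega_1,\omega_2$ appear.

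The two directions then follow. For (i), suppose $\Delta(g_s\Lambda_A)>\omega_1 r$. Then $g_s\Lambda_A$ has a nonzero vector of sup-norm $<e^{-\omega_1 r}$, which lies inside the transformed box. Pulling back gives a solution $(\bm p,\bm q)$ at time $t(s)$. We must check $\bm q\ne\bm0$: if $\bm q=\bm0$ then $\bm p\neq\bm 0$ forces $\|\bm p\|_{\bm\alpha}\ge1>\psi(t)$, a contradiction. For (ii), suppose $\Delta(g_s\Lambda_A)\le\omega_2 r$. Then every nonzero vector has sup-norm $\ge e^{-\omega_2 r}$, so no lattice vector lies in the transformed box, which is contained in the cube of side $e^{-\omega_2 r}$. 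Hence \eqref{equ:dirichletwei} fails at $t(s)$ for unboundedly many $t$. A mild care point is strict versus nonstrict inequalities, together with the fact that the quasi-norm box is open.

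For \eqref{equ:qusidecr} and \eqref{equ:diequiv}, we compare $s$ with $\log t$. From $\frac1{2t}\le\psi<\frac1t$ we get
\[
s=\log t+O(1),\qquad \frac{ds}{d\log t}\in\Bigl[\tfrac md,\;1+O(1)\Bigr].
\]
The lower bound holds because $\log\psi$ is decreasing, so $s$ grows at least like $\frac md\log t$. Consequently the interval $[s_1,s_1+s_1^\eta]$ corresponds to a $t$-range contained in $[t_1,t_1e^{C(\log t_1)^{\eta}}]$. The main obstacle is that the constant $C$ may exceed $1$. We handle this by iterating \eqref{equ:con1psinew} a bounded number of times (about $C+1$ steps, since $(\log t)^\eta$ changes negligibly across the range). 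This gives $F_\psi(t_2)\le C_\psi^{C+1}F_\psi(t_1)$, and since $r\asymp F_\psi$ we obtain \eqref{equ:qusidecr}.

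For \eqref{equ:diequiv}, we use the analogue of Remark \ref{equvserdivREM}. The quasi-monotonicity makes $F_\psi$ comparable to a constant on $[e^j,e^{j+1}]$, and by \eqref{equ:qusidecr} $r$ is comparable to a constant on unit $s$-intervals. Since $s=\log t+O(1)$ with bounded distortion, the sum $\sum_k k^{-1}F_\psi(k)^\alpha\log^\beta(1/F_\psi(k))$ is comparable to $\sum_j F_\psi(e^j)^\alpha\log^\beta(\cdot)$, which in turn is comparable to $\sum_k r(k)^\alpha\log^\beta(1/r(k))$. This uses that $x^\alpha\log^\beta(1/x)$ is doubling-comparable on $(0,\tfrac12]$.
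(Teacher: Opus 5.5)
Your overall strategy matches the paper's: the same Kleinbock--Margulis reparametrization $e^{-dr(s)}=t\psi(t)$, $s=\frac md\log t-\frac nd\log\psi(t)$, the comparison $r\asymp F_\psi$, and iteration of \eqref{equ:con1psinew}. Your argument for \eqref{equ:qusidecr} is sound. It runs the comparison in the opposite direction from the paper, which shows the $t$-window $[t_1,t_1e^{(\log t_1)^\eta}]$ covers the $s$-window $[s_1,s_1+\frac12 s_1^\eta]$ and then applies the resulting bound twice.

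Where the paper simply cites \cite[Proposition 6.2]{KleinbockStrombergssonYu2022} for (i) and (ii), your direct derivation is also fine, but you need to actually carry out the computation you left with question marks. It is $e^{s}\psi(t)=(t\psi(t))^{m/d}=e^{-mr}$ and $te^{-s}=(t\psi(t))^{n/d}=e^{-nr}$. So $g_s$ carries the region in \eqref{equ:dirichletwei} onto the open box with half-widths $e^{-m\alpha_i r}$ and $e^{-n\beta_j r}$, which lies between the sup-norm balls of radii $e^{-\omega_1 r}$ and $e^{-\omega_2 r}$. Drop the claimed upper bound on $ds/d\log t$: $\psi$ need not be differentiable and can drop steeply. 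You only use the lower bound $s(t_2)-s(t_1)\ge\frac md\log(t_2/t_1)$, or simply $s=\log t+O(1)$.

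The genuine flaw is in your justification of \eqref{equ:diequiv}. You claim that $F_\psi$ is comparable to a constant on $[e^j,e^{j+1}]$ and that $r$ is comparable to a constant on unit $s$-intervals; both claims are false. Conditions \eqref{equ:con1psinew} and \eqref{equ:qusidecr} are one-sided: they forbid growth by more than a bounded factor, but $F_\psi$ (hence $r$) may drop by an arbitrarily large factor inside a single block. What you actually have is $C_\psi^{-1}F_\psi(e^{j+1})\le F_\psi(k)\le C_\psi F_\psi(e^j)$ for $e^j\le k\le e^{j+1}$. Since $e^{k-1}<t(k)<e^k$, you also have $C_\psi^{-1}F_\psi(e^k)\le F_\psi(t(k))\le C_\psi F_\psi(e^{k-1})$. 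The correct argument, which is the paper's, is a sandwich with an index shift. Writing $f(x)=x^\alpha\log^\beta(1/x)$, one gets $f(r(k))\ll\sum_{e^{k-2}\le j<e^{k-1}}j^{-1}f(F_\psi(j))$ and $\sum_{e^{k}\le j<e^{k+1}}j^{-1}f(F_\psi(j))\ll f(r(k))$, and summing over $k$ gives the equivalence of divergence. This needs more than the doubling property of $f$ you invoke. You need $f(x_2)\ll_C f(x_1)$ whenever $x_1,x_2\in(0,\frac12]$ with $x_2\le Cx_1$, i.e.\ $f$ must be quasi-increasing. That holds because $f$ is increasing on $(0,e^{-\beta/\alpha}]$ and bounded above and below by positive constants on the rest of $(0,\frac12]$; this is the same argument the paper gives in the proof of Lemma \ref{lem:redu}.
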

\begin{proof}
For $\psi$ as in Theorem \ref{thm:improvedirichleconwei}, let $r=r_{\psi}$ be the corresponding function as in \cite[Lemma 8.3]{KleinbockMargulis1999}; it is uniquely determined by $\psi$ via the relation
\begin{align}\label{equ:keyrelation}
\psi(t)^{1/m}e^{s/m}=t^{1/n}e^{-s/n}=e^{-r(s)}.
\end{align}
Solving these equations we get 
\begin{align}\label{equ:keyrel}
e^{-dr(s)}=t\psi(t),\qquad s=\frac{m}{d}\log t-\frac{n}{d}\log\psi(t),\quad \textrm{and}\quad t=e^{s-nr(s)}.
\end{align}
In view of the above first relation and the assumption $\frac12\leq t\psi(t)<1$ (\eqref{equ:con2psi} and \eqref{equ:lowerbdpsi}), we have $r(s)\in (0, \frac{\log 2}{d}]\subset (0,\frac1d)$ for all $s\geq s_0$. Then again by \eqref{equ:keyrel} we have,
with $t=t(s)=e^{s-nr(s)}$:
\begin{align}\label{divequivpf1}
\frac d2r(s)< F_{\psi}(t) <d r(s)\quad\text{and }\quad e^{s-1}<t<e^s,\qquad\forall\ s\geq s_0.
\end{align}
Using also $r(s)\in (0,\frac{\log2}d]$ 
and $F_{\psi}(t)\in (0,\frac12]$, it follows that
$$
r(s)^\alpha\log^\beta\bigl(\tfrac1{r(s)}\bigr)\asymp_{d,\alpha,\beta} 
F_{\psi}(t)^{\alpha}\log^{\beta}\bigl(\tfrac{1}{F_{\psi}(t)}\bigr)\quad \text{
for all $s\geq s_0$ and with $t=t(s)$}.
$$
Hence, using also $e^{k-1}<t(k)<e^k$,
the assumption \eqref{equ:con1psinew}, 
and $\sum_{e^k\leq j<e^{k+1}}\frac1j\asymp1$ ($\forall k\geq1$),
we have for all sufficiently large integers $k$:
\begin{align*}
r(k)^\alpha\log^\beta\left(\tfrac1{r(k)}\right)
\ll_{d,\alpha,\beta, C_{\psi}} F_\psi(e^{k-1})^\alpha\log^\beta\left(\tfrac1{F_\psi(e^{k-1})}\right)
\ll_{\alpha,\beta, C_{\psi}}\sum_{e^{k-2}\leq j<e^{k-1}}\frac1jF_\psi(j)^\alpha\log^\beta\left(\tfrac1{F_\psi(j)}\right),
\end{align*}
and similarly
\begin{align*}
r(k)^\alpha\log^\beta\left(\tfrac1{r(k)}\right)
\gg_{d,\alpha,\beta, C_{\psi}} F_\psi(e^k)^\alpha\log^\beta\left(\tfrac1{F_\psi(e^k)}\right)
\gg_{\alpha,\beta, C_{\psi}}\sum_{e^k\leq j<e^{k+1}}\frac1jF_\psi(j)^\alpha\log^\beta\left(\tfrac1{F_\psi(j)}\right).
\end{align*}
This shows the equivalence of the divergence of the two series in \eqref{equ:diequiv}. 

To show that $r$ satisfies \eqref{equ:qusidecr}, for any $s_1\geq s_0$, let $t_1=t(s_1)$. Then for any $s_2\geq s_1$ such that $t_2:=t(s_2)\in [t_1, t_1e^{(\log t_1)^{\eta}}]$ we have
\begin{align*}
r(s_2)<\frac{2}{d}F_{\psi}(t_2) \leq \frac{2C_{\psi}}{d} F_{\psi}(t_1)<2 C_{\psi} r(s_1).
\end{align*}
Note that as $t_2$ runs through $[t_1, t_1e^{(\log t_1)^{\eta}}]$, $s_2$ runs through $[s_1, s_1']$ where $s_1'>s_1$ is the number determined through
$t(s_1')=t_1e^{(\log t_1)^{\eta}}$, that is, 
$
e^{s_1'-nr(s_1')}=e^{s_1-nr(s_1)+(\log t_1)^{\eta}}.
$
Moreover, from the relation $t_1=e^{s_1-nr(s_1)}$, we can estimate $(\log t_1)^{\eta}=(s_1-nr(s_1))^{\eta}>(s_1-n/d)^{\eta}>\frac23 s_1^{\eta}$ provided that 
$s_1\gg_{\eta,d}1$
Hence 
$$
s_1'=s_1-nr(s_1)+nr(s_1')+(\log t_1)^{\eta}>s_1-\frac{n}{d}+\frac{2}{3} s_1^{\eta}> s_1+ \frac{s_1^{\eta}}{2},
$$
where in the last inequality we again assumed that $s_1\gg 1$.
This shows that 
\begin{align*}
r(s_2)<2C_{\psi}r(s_1),\qquad \forall\, 1\ll
s_1\leq s_2\leq s_1+\frac{s_1^{\eta}}{2}.
\end{align*}
Applying this inequality twice, we get that $r$ satisfies \eqref{equ:qusidecr} with $C_r=(2C_{\psi})^2$. 

The moreover part is \cite[Proposition 6.2]{KleinbockStrombergssonYu2022}. Note that although \cite[Proposition 6.2]{KleinbockStrombergssonYu2022} was stated under the assumption that $t\psi(t)$ is increasing (or equivalently, $r(s)$ is decreasing), this assumption was not used in the proof. 
\end{proof}
In view of Proposition \ref{prop:dyinput} we have the following lemma relating  $\mathbf{DI}_{\bm\alpha,\bm\beta}(\psi)$ to certain limsup sets.
\begin{Lem}\label{lem:dyred}
Let $\psi$ and $r=r_{\psi}$ be as in Proposition \ref{prop:dyinput}, and
let $C_r\geq1$ be a constant such that \eqref{equ:qusidecr} holds.
Define
\begin{align}\label{equ:thickening}
\widetilde{\Delta}_r:=\bigcup_{0\leq s<1}g_{-s}\Delta^{-1}[0, r],\qquad \forall\, r>0. 
\end{align}
For any integer $k\geq s_0$, let 
\begin{align}\label{equ:shrikingupp}
\overline{E}_k:=\left\{A\in \mathrm{M}_{m,n}(\R/\Z): g_k\Lambda_A\in \widetilde{\Delta}_{\omega_1C_{r}r(k)}\right\},
\end{align}
and
\begin{align}\label{equ:shriking}
\underline{E}_k:=\left\{A\in \mathrm{M}_{m,n}(\R/\Z): g_k\Lambda_A\in \widetilde{\Delta}_{\omega_2C_{r}^{-1}r(k+1)}\right\}.
\end{align}
Then  
\begin{align}\label{equ:diinclurela}
\limsup_{k\to\infty}\underline{E}_k\subset \mathbf{DI}^c_{\bm\alpha,\bm\beta}(\psi)\subset \limsup_{k\to\infty}\overline{E}_k.
\end{align}
\end{Lem}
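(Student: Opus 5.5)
The plan is to derive both inclusions in \eqref{equ:diinclurela} from parts (i) and (ii) of Proposition \ref{prop:dyinput}, together with the quasi-monotonicity \eqref{equ:qusidecr} of $r$. Since we only compare $r$ at times $s$ and $\lfloor s\rfloor$ or $\lfloor s\rfloor+1$, the window in \eqref{equ:qusidecr} is only of length $1\le s_1^{\eta}$ for large $s_1$. The thickening $\widetilde{\Delta}_r$ converts a hit at a real time $s\in[k,k+1)$ into a hit at the integer time $k$. Indeed, $g_s\Lambda_A\in\Delta^{-1}[0,\rho]$ with $s=k+u$, $0\le u<1$, is equivalent to $g_k\Lambda_A=g_{-u}g_s\Lambda_A\in g_{-u}\Delta^{-1}[0,\rho]\subset\widetilde{\Delta}_\rho$.

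\emph{Right inclusion.} Let $A\in\DI^c_{\bm\alpha,\bm\beta}(\psi)$. By the contrapositive of Proposition \ref{prop:dyinput}(i), the set of $s$ with $\Delta(g_s\Lambda_A)\le\omega_1 r(s)$ is unbounded. Take such an $s$, large, and write $k=\lfloor s\rfloor$, so that $k\le s<k+1\le k+k^{\eta}$. Applying \eqref{equ:qusidecr} with $s_1=k$ and $s_2=s$ gives $r(s)\le C_r r(k)$. Hence $g_s\Lambda_A\in\Delta^{-1}[0,\omega_1C_rr(k)]$, and by the observation above $g_k\Lambda_A\in\widetilde{\Delta}_{\omega_1C_rr(k)}$, i.e.\ $A\in\overline{E}_k$. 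Unboundedness of $s$ gives infinitely many such $k$, so $A\in\limsup_k\overline{E}_k$.

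\emph{Left inclusion.} Let $A\in\limsup_k\underline{E}_k$. Then for infinitely many $k$ there is $u\in[0,1)$ with $g_{-u}^{-1}g_k\Lambda_A=g_{k+u}\Lambda_A\in\Delta^{-1}[0,\omega_2C_r^{-1}r(k+1)]$. Put $s=k+u$. Applying \eqref{equ:qusidecr} with $s_1=s$ and $s_2=k+1$, which is allowed since $k+1-s\le1\le s^\eta$ for $k$ large, gives $r(k+1)\le C_rr(s)$. Therefore $\Delta(g_s\Lambda_A)\le\omega_2C_r^{-1}r(k+1)\le\omega_2r(s)$. This holds for an unbounded set of $s$, so Proposition \ref{prop:dyinput}(ii) shows that $A$ is not $\psi_{\bm\alpha,\bm\beta}$-Dirichlet.

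There is no real obstacle here. The only points needing care are the bookkeeping of which endpoint of $[k,k+1]$ plays the role of $s_1$ in \eqref{equ:qusidecr}, and checking that all times involved are large enough for the ``$1\ll s_1$'' requirement, which holds since we only use large $k$. The statement should also be read for $A$ as a class in $\mathrm{M}_{m,n}(\R/\Z)$; this is harmless because $\Lambda_A$ depends only on $A$ mod $\Z$.
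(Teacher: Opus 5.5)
Your proof is correct and follows the paper's argument. You unpack the thickening $\widetilde{\Delta}_\rho$ to pass between integer times $k$ and real times $s\in[k,k+1)$, apply \eqref{equ:qusidecr} over a window of length at most $1$, and invoke parts (i) and (ii) of Proposition \ref{prop:dyinput}; the paper writes out only the left inclusion and calls the right one similar, and your right inclusion (contrapositive of (i), with $s_1=\lfloor s\rfloor$ and $s_2=s$) is exactly that omitted step.
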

\begin{proof}
For the first inclusion relation in \eqref{equ:diinclurela}, take any $A\in \limsup_{k\to\infty}\underline{E}_k$.  In view of the definition of $\widetilde{\Delta}_r$, this implies that %
there exist infinitely many 
$k\geq s_0$ such that the following holds for some $s_k\in[0,1)$:
\begin{align*}
g_{k+s_k}\Lambda_A\in \Delta^{-1}[0, \omega_2C_r^{-1}r(k+1)].
\end{align*}
For $k$ sufficiently large we also have, by \eqref{equ:qusidecr},
\begin{align*}
\Delta^{-1}[0, \omega_2C_r^{-1}r(k+1)]\subset \Delta^{-1}[0, \omega_2r(k+s_k)]. 
\end{align*}
Hence we conclude that there exists an unbounded set of $s$ such that $\Delta(g_s\Lambda_A)\leq \omega_2r(s)$.
Then by item (ii) of the moreover part of Proposition  \ref{prop:dyinput} we have $A\in \mathbf{DI}^c_{\bm\alpha,\bm\beta}(\psi)$. This proves the first inclusion relation in \eqref{equ:diinclurela}. The second inclusion relation in \eqref{equ:diinclurela} follows similarly using \eqref{equ:qusidecr} and item (i) of the moreover part of Proposition  \ref{prop:dyinput}.
\end{proof}

\subsection{Measure estimates and smooth approximations}\label{smoothapprSEC}
 In this section we define our new shrinking targets and prove necessary measure estimates and smooth approximation results for these target sets. 

For each $0<r<1$, 
recall that
\begin{align*}
\Delta_r':=\Delta^{-1}[0,r]\cap \cH(R_r),\qquad \text{with $R_r=(1-\tfrac{r}{2d}, 1+\tfrac{r}{2d})\times (-\sqrt{r}, \sqrt{r})^{d-1}$}. 
\end{align*}
The shrinking targets we will be working with are the following thickenings of the sets $\Delta_r'$ under the action of $g_s$: %
For any $0<r<1$, define
\begin{align}\label{def:tildedeltar'}
\widetilde{\Delta}_r':=\bigcup_{0\leq s<1/2}g_{-s}\Delta_r'.
\end{align}
The main goal of this section is to prove the following two lemmas regarding the  measure 
and smooth approximation   of $\widetilde{\Delta}'_r$. 

First we have the following measure estimate of $\widetilde{\Delta}_r'$ which is analogous to \cite[Theorem 5.1]{KleinbockStrombergssonYu2022}. 
\begin{Prop}\label{prop:meaest}
Let $\varkappa_d=\tfrac{d^2+d-4}{2}$ and  $\lambda_d=\tfrac{d(d-1)}{2}$ be as in Theorem \ref{thm:improvedirichleconwei}. Then we have
\begin{align}\label{equ:meaest}
\mu_d(\widetilde{\Delta}_r')\asymp_{d} r^{\varkappa_d}\log^{\lambda_d}\Bigl(\tfrac{1}{r}\Bigr),%
\qquad \text{as $r\to 0^+$}. 
\end{align}
\end{Prop}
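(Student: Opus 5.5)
The plan is to prove the upper and lower bounds in \eqref{equ:meaest} separately. Note that $\varkappa_d=\frac{(d-1)(d+2)}{2}-1$, so the claim says that thickening by the flow for time $1/2$ costs exactly one power of $r$ relative to Theorem \ref{thm:meaestss}. This is the same phenomenon as in \cite[Theorem 5.1]{KleinbockStrombergssonYu2022} for the full thickening $\widetilde{\Delta}_r$.

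\emph{Upper bound.} Since $\Delta_r'\subset\Delta^{-1}[0,r]$, we have $\widetilde{\Delta}_r'\subset\bigcup_{0\le s<1}g_{-s}\Delta^{-1}[0,r]=\widetilde{\Delta}_r$. Therefore $\mu_d(\widetilde{\Delta}_r')\le\mu_d(\widetilde{\Delta}_r)$, and the upper bound $\mu_d(\widetilde{\Delta}_r)\ll_d r^{\varkappa_d}\log^{\lambda_d}(1/r)$ is exactly \cite[Theorem 5.1]{KleinbockStrombergssonYu2022}. Nothing new is needed here.

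\emph{Lower bound.} I would follow the proof of the lower bound in \cite[Theorem 5.1]{KleinbockStrombergssonYu2022} and replace $\underline{K}_r$ by $\underline{K}_{r'}'$ from Proposition \ref{prop:meaestlobd}, with $r'=1-e^{-r}$. By \eqref{equ:inclukeyALT}, $\underline{K}'_{r'}\subset\Delta_r'$, so it suffices to bound $\mu_d\bigl(\bigcup_{0\le s<1/2}g_{-s}\underline{K}'_{r'}\bigr)$ from below. The idea is that a lattice in $\underline{K}_{r'}'$ stays in $\Delta^{-1}[0,r]$ only for a $g_s$-time of order $r$. Hence the flow-box map $[0,1/2)\times\Sigma\to X_d$, $(s,\Lambda)\mapsto g_{-s}\Lambda$, applied to a suitable ``cross-section'' $\Sigma\subset\underline{K}'_{r'}$ transverse to the flow, should be essentially injective. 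Its image then has measure $\asymp \mu_d(\underline K'_{r'})/r$, which by \eqref{equ:kr'meest} gives $\gg r^{\varkappa_d}\log^{\lambda_d}(1/r)$. Concretely, I would use the coordinates \eqref{puCOORD}. One coordinate direction of $\underline{\cK}'_{r'}$ that has length $\asymp r$ is transverse to $g_s$: for instance, the diagonal entries $b_{\ell\ell}\in(1-\frac{r}{2d},1)$ are moved at unit speed by $g_s$ (after conjugating to account for the weights). Replacing that interval of length $\asymp r$ by an interval of length $\asymp 1$ in the flow parameter, while keeping the other constraints, shows that $\nu$ of the thickened set is $\gg r^{-1}\nu(\underline{\cK}'_{r'})$. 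Injectivity of $\pi$ on the thickened set would be inherited as in \cite[Sec.\ 5]{KleinbockStrombergssonYu2022}, because the property \eqref{equ:keyproperty} and the defining inequalities are stable under $g_{-s}$ for $0\le s<1/2$ up to harmless constants.

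\emph{Main obstacle.} The main obstacle is checking that the extra condition \eqref{EQU:addcond}, and membership in $\cH(R_r)$, survive in the argument of \cite[Theorem 5.1]{KleinbockStrombergssonYu2022}. That is, the thickening argument there must be compatible with restricting each $z_{ij}$ to $(r,\sqrt r)$. If the direct transplant is awkward, I would fall back on an alternative route: disintegrate $\mu_d$ along $g_s$-orbits and show that for $\Lambda\in\Delta_r'$ the return-time set $\{s\in[-1,1]:g_s\Lambda\in\Delta_r'\}$ has Lebesgue measure $\ll r$. This holds because a vector $\bm v\in R_r$ with first coordinate near $1$ leaves the region $\|\cdot\|\ge e^{-r}$, or grows beyond $1$, after time $O(r)$. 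By a Fubini (Kac-type) argument this bound yields $\mu_d(\widetilde{\Delta}_r')\gg r^{-1}\mu_d(\Delta_r')$, and combining with Theorem \ref{thm:meaestss} gives the claim.
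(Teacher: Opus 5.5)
Your upper bound is exactly the paper's. For the lower bound, your first route is also the paper's route, but the ``main obstacle'' you flag is not one, and the reason you give for the key step is wrong. Nothing about \eqref{EQU:addcond} or $\mathcal{H}(R_r)$ has to survive the flow. Indeed, $\widetilde{\Delta}'_r$ is by definition a union of $g_{-s}$-translates of $\Delta'_r$, so one only needs $\underline{K}'_{1-e^{-r}}\subset\Delta'_r$, which is \eqref{equ:inclukeyALT}; and disjointness passes to subsets. The paper observes that \cite[Lemma 5.2]{KleinbockStrombergssonYu2022} gives $\underline{K}_r\cap g_{-s}\underline{K}_r=\emptyset$ for $s\in[r,1)$, hence the same holds for $\underline{K}'_r\subset\underline{K}_r$. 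So $\bigsqcup_{0\le k<1/(2r)}g_{-kr}\underline{K}'_r\subset\bigcup_{0\le s<1/2}g_{-s}\underline{K}'_r$ has measure $\asymp r^{-1}\mu_d(\underline{K}'_r)$, and Proposition \ref{prop:meaestlobd} finishes. No flow box in $G$ and no injectivity of $\pi$ on a thickened set is needed. Your justification of that injectivity is false: you claim that \eqref{equ:keyproperty} and the defining inequalities are stable under $g_{-s}$ for $0\le s<\frac12$ up to harmless constants. In fact the constraint $b_{\ell\ell}\in(1-\frac{r}{2d},1)$ and the $R_r$-vector are destroyed after time $O(r)$, and it is exactly this instability that produces the disjointness.

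Your fallback route is genuinely different and works after one repair. For $s<0$, the $R_r$-vector $\bm{v}$ of $\Lambda$ satisfies $\|g_s\bm{v}\|\le\max\{e^{-\alpha_1|s|}(1+\frac{r}{2d}),e^{1/2}\sqrt{r}\}<e^{-r}$ once $|s|\ge 2r/\alpha_1$ (for $|s|\le\frac12$ and $r$ small), as you say. For $s>0$, however, ``$g_s\bm{v}$ grows beyond $1$'' proves nothing. The lattice $g_s\Lambda$ may meet $R_r$ in a different vector, and $\Delta^{-1}[0,r]$ does not exclude long vectors. Instead, apply the same backward estimate to the $R_r$-vector of $g_s\Lambda\in\Delta'_r$; this puts a vector of norm $<e^{-r}$ into $\Lambda$ once $s\ge 2r/\alpha_1$. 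Thus the return set in $[-\frac12,\frac12]$ has length at most $4r/\alpha_1$. The Fubini identity $\frac12\mu_d(\Delta'_r)=\int_{\widetilde{\Delta}'_r}\int_0^{1/2}\chi_{\Delta'_r}(g_sx)\,ds\,d\mu_d(x)\le\frac{4r}{\alpha_1}\mu_d(\widetilde{\Delta}'_r)$, together with Theorem \ref{thm:meaestss}, then gives the lower bound. Compared with the paper, this uses only the new property \eqref{equ:keyproperty} (the same mechanism as in Proposition \ref{prop:shortmix}) and the measure of $\Delta'_r$ itself. It bypasses \cite[Lemma 5.2]{KleinbockStrombergssonYu2022} and the coordinates of $\underline{\mathcal{K}}'_r$. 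The price is an implied constant depending on $\alpha_1$ rather than only on $d$. That is harmless for the later application, where constants may depend on $\bm{\alpha},\bm{\beta}$, but it is weaker than the stated $\asymp_d$, which the paper's route delivers uniformly in the weights.
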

\begin{proof}
Take $0<r<c^2_0/d^{2d-2}$ as in the proof of Proposition \ref{prop:meaestlobd}. 
Since $\widetilde{\Delta}'_r\subset \widetilde{\Delta}_r$, by \cite[Theorem 5.1]{KleinbockStrombergssonYu2022} we have
$$
\mu_d(\widetilde{\Delta}'_r)\leq \mu_d(\widetilde{\Delta}_r)\asymp_d r^{\varkappa_d}\log^{\lambda_d}\Bigl(\tfrac{1}{r}\Bigr).
$$
We thus only need to prove the lower bound in \eqref{equ:meaest}. For this, let $\underline{K}_r\subset X_d$ be as in Proposition \ref{prop:regulargen} and let $\underline{K}_r'\subset X_d$ be as defined in \eqref{equ:kr'}. 
Recall that $\underline{K}_r'\subset \underline{K}_r$, and by \cite[Lemma 5.2]{KleinbockStrombergssonYu2022}, $\underline{K}_r\cap g_{-s}\underline{K}_r=\emptyset$ for all $s\in [r,1)$. %
Hence we also have
$$
\underline{K}'_r\cap g_{-s}\underline{K}'_r=\emptyset,\qquad \forall\, s\in [r,1).
$$ 
In particular, if we set $\widetilde{\underline{K}}_r':=\bigcup_{0\leq s<1/2}g_{-s}\underline{K}_r'$, then by \eqref{equ:kr'meest} we have (noting also that $\frac{(d-1)(d+2)}{2}=\varkappa_d+1$)
\begin{align}\label{equ:meaestpf1}
\mu_d(\widetilde{\underline{K}}_r')\geq \mu_d\left(\sqcup_{0\leq k<1/2r}g_{-kr}\underline{K}_r'\right)\asymp r^{-1}\mu_d(\underline{K}_r')\gg r^{\varkappa_d}\log^{\lambda_d}\Bigl(\tfrac{1}{r}\Bigr). 
\end{align}
Finally, note that by \eqref{equ:inclukeyALT}, $\underline{K}_{1-e^{-r}}'\subset \Delta'_r$ so that $\widetilde{\underline{K}}_{1-e^{-r}}'\subset \widetilde{\Delta}_r'$. We thus get
\begin{align*}
\mu_d(\widetilde{\Delta}_r')\geq \mu_d(\widetilde{\underline{K}}_{1-e^{-r}}')\gg r^{\varkappa_d}\log^{\lambda_d}\Bigl(\tfrac{1}{r}\Bigr),
\end{align*}
where the last bound holds by \eqref{equ:meaestpf1} and since
$1-e^{-r}\asymp r$.
This finishes the proof. 
\end{proof}

Next we follow the strategy in  \cite[Lemma 6.5]{KleinbockStrombergssonYu2022} to approximate $\widetilde{\Delta}_r'$ by smooth functions and give bounds on the 
$C^{\ell}$-norm of these. 
\begin{Lem}\label{lem:smapp}
For any $0<r<1/16d^2$, 
there exists a smooth function $\phi_r\in C_c^{\infty}(X_d)$ satisfying 
\begin{align}\label{equ:smapp}
\chi_{\widetilde{\Delta}_{r}'}\leq \phi_r\leq \chi_{\widetilde{\Delta}_{2r}'}
\qquad\text{and}\qquad
\|\phi_r\|_{C^\ell}\ll_{\ell} r^{-\ell}\hspace{20pt} (\forall \ell\in\N).
\end{align}
\end{Lem}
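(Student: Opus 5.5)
The plan is the standard mollification: convolve the indicator function of an intermediate set with a smooth bump on $G$ supported in a small identity neighbourhood of radius $\asymp r$. Fix a norm on $\fg$ and, for $\epsilon>0$, let $B_\epsilon:=\exp(\{Y\in\fg:\|Y\|<\epsilon\})$. Choose a nonnegative $\rho_\epsilon\in C_c^\infty(G)$ supported in $B_\epsilon$, with $\int_G\rho_\epsilon\,\mathrm{d}\nu=1$ and $\sup|\mathcal{D}_Z\rho_\epsilon|\ll_Z\epsilon^{-\deg Z}\nu(B_\epsilon)^{-1}$; such a $\rho_\epsilon$ is obtained by rescaling a fixed bump in exponential coordinates. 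For a Borel set $S\subset X_d$ define
\begin{align*}
\phi:=\rho_\epsilon*\chi_S,\qquad \phi(x)=\int_G\rho_\epsilon(h)\chi_S(h^{-1}x)\,\mathrm{d}\nu(h).
\end{align*}
Differentiation along $Y\in\fg$ falls on $\rho_\epsilon$, so $\|\phi\|_{C^\ell}\ll_\ell\epsilon^{-\ell}$. Moreover $\chi_{S^-}\leq\phi\leq\chi_{S^+}$, where $S^+:=B_\epsilon S$ and $S^-:=\{x: B_\epsilon^{-1}x\subset S\}$. We take $\epsilon=c r$ with $c=c(d)>0$ small and $S$ an intermediate set such as $\widetilde{\Delta}'_{3r/2}$ or a suitable neighbourhood of it. It then suffices to prove the two inclusions
\begin{align*}
B_{cr}\widetilde{\Delta}'_r\subset S\qquad\text{and}\qquad B_{cr}S\subset\widetilde{\Delta}'_{2r}.
\end{align*}
Compact support is automatic, since $\Delta^{-1}[0,2r]$ is compact by Mahler's criterion.

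The key step is to check that $\widetilde{\Delta}'_{r}$ is stable under perturbation by $B_{cr}$, i.e.\ that $B_{cr}\widetilde{\Delta}'_r\subset\widetilde{\Delta}'_{r'}$ for some $r'\in(r,2r)$ with room to spare; I would use $r'=\tfrac{3}{2}r$ and then repeat the argument once more to reach $2r$. Take $x=g_{-s}\Lambda$ with $\Lambda\in\Delta'_r$ and $s\in[0,\tfrac12)$, and let $h\in B_{cr}$. Write $hg_{-s}=g_{-s}h'$ with $h'=g_sh g_{-s}$. Since $|s|<1/2$, Remark~\ref{rmk:operanorm} gives $h'\in B_{C cr}$ with $C$ depending only on $d$. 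Hence it suffices to show that $h'\Lambda\in\Delta'_{3r/2}$ whenever $h'\in B_{C c r}$.

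This splits into two conditions. For the first, every $v\in\Lambda\setminus\{\bm 0\}$ satisfies $\|v\|\geq e^{-r}$, and $\|h'v\|\geq(1-O(cr))\|v\|$, so $\Delta(h'\Lambda)\leq r+O(cr)\leq\tfrac32 r$ once $c$ is small. For the second, $\Lambda$ contains a vector $v\in R_r$, and $\|v\|\leq 2$, so $h'v-v=O(cr)$. The first coordinate then moves by $O(cr)$, which lies within the margin between $r/2d$ and $3r/4d$ for $c\ll 1/d$. The remaining coordinates move by $O(cr)\leq(\sqrt{3/2}-1)\sqrt r$. Thus $h'v\in R_{3r/2}$. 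Together with the compactness noted above, this gives the lemma.

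The main obstacle I expect is the lower bound $\chi_{\widetilde{\Delta}'_r}\leq\phi$, which needs $B_{cr}^{-1}x\subset S$ for every $x\in\widetilde{\Delta}'_r$. By the same stability argument applied to $B_{cr}^{-1}=B_{cr}$, this follows if we take $S=B_{cr}\widetilde{\Delta}'_r$. For the upper bound we then need $B_{cr}S\subset B_{2cr}\widetilde{\Delta}'_r\subset\widetilde{\Delta}'_{2r}$, which is the stability step with $2c$ in place of $c$. The only delicate points are the first coordinate window of width $r/d$, which is exactly why the perturbation must be of size $\asymp r$ rather than $\sqrt r$, and the conjugation by $g_s$ for $|s|<1/2$, which costs only a bounded factor.
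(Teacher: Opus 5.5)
Your proposal is correct and follows essentially the paper's route. You mollify the indicator of an intermediate set by a bump supported in an $O(r)$-neighbourhood of the identity, absorb the conjugation by $g_s$ ($0\le s<\frac12$) into a bounded enlargement of that neighbourhood, and check separately that $\Delta^{-1}[0,r]$ and $\cH(R_r)$ are stable under such perturbations, with the first-coordinate window of $R_r$ forcing the perturbation size to be $\asymp r$. The paper uses the symmetric matrix neighbourhoods $\cO_r$ and the intermediate set $\widetilde{\Delta}'_{3r/2}$ in place of your exponential balls and $B_{cr}\widetilde{\Delta}'_r$, which is immaterial.

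One small slip: $B_{cr}B_{cr}\subset B_{2cr}$ holds only up to a Baker--Campbell--Hausdorff correction. The inclusion $B_{cr}B_{cr}\subset B_{3cr}$ for small $r$ suffices.
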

\begin{proof}
Set 
$$
\cO_r:=\left\{g\in G: \max\{\|g-I_d\|, \|g^{-1}-I_d\|\}< \tfrac{r}{100d}\right\}. 
$$
Here $\|\cdot\|$ is the supremum norm on $\mathrm{M}_{d,d}(\R)$.
We claim that 
\begin{align}\label{equ:inclre}
\cO_{r_1}\widetilde{\Delta}_{r_2}'\subset \widetilde{\Delta}_{r_1+r_2}',\qquad \forall\, 0<r_1<r_2<1/8d^2.
\end{align}
To prove this, let $0<r_1<r_2<1/8d^2$ be given; it then suffices to show that for any $g\in \cO_{r_1}$,  $0\leq s<\frac12$ and $\Lambda\in \Delta_{r_2}'$, 
$$
g_sgg_{-s}\Lambda\in \Delta_{r_1+r_2}'. 
$$
By the same computation as in the proof of \cite[Lemma 6.5]{KleinbockStrombergssonYu2022} we have $g_{s}gg_{-s}\in \cO_{10r_1}$. Thus it suffices to show
$
\cO_{10r_1}\Delta_{r_2}'\subset \Delta_{r_1+r_2}'
$, and for this it suffices to verify:
$$
\cO_{10r_1}\Delta^{-1}[0, r_2]\subset \Delta^{-1}[0, r_1+r_2]\quad \text{and} \quad \cO_{10r_1}\cH(R_{r_2})\subset \cH(R_{r_1+r_2}). 
$$
The above first inclusion relation is already proved in the proof of \cite[Lemma 6.5]{KleinbockStrombergssonYu2022}; we thus only need to prove the second one. Given any $g\in \cO_{10r_1}$ and $\Lambda\in \cH(R_{r_2})$, we want to show $g\Lambda\in \cH(R_{r_1+r_2})$. By definition, there exists some $\bm{v}\in \Lambda\cap R_{r_2}$; we will then show $\bm{w}:=g\bm{v}\in R_{r_1+r_2}$, and thereby conclude $g\Lambda \in \cH(R_{r_1+r_2})$. Let $h=g-I_d$; then $\|h\|<\frac{r_1}{10d}$ (since $g\in \cO_{10r_1}$), and $\|\bm{w}-\bm{v}\|=\|h\bm{v}\|$.
Hence 
\begin{align*}
|w_i-v_i|\leq \frac{r_1}{10d}\left(\sum_{i=1}^d|v_i|\right)<\frac{r_1}{10d}\left(\left(1+\frac{r_2}{2d}\right)+(d-1)\sqrt{r_2}\right)<\frac{r_1}{5d},\qquad \forall\, 1\leq i\leq d,
\end{align*}
where for the last inequality we used that assumption that $r_2<1/8d^2$.
This then implies that  
$$
|w_1-1|<|v_1-1|+\frac{r_1}{5d}<\frac{r_2}{2d}+\frac{r_1}{5d}<\frac{r_1+r_2}{2d},
$$
and 
$$
|w_i|<|v_i|+\frac{r_1}{5d}<\sqrt{r_2}+\frac{r_1}{5d}<\sqrt{r_1+r_2},\qquad \forall\, 2\leq i\leq d.
$$
This shows that $\bm{w}=g\bm{v}\in R_{r_1+r_2}$, finishing the proof of the relation \eqref{equ:inclre}. 

Now by \cite[Lemma 6.6]{KleinbockStrombergssonYu2022} there is some ${\theta}_r\in C_c^\infty(G)$
satisfying 
${\theta}_r\geq0$,
$\supp({\theta}_r)\subset \mathcal{O}_{r/2}$,
$\int_{G}{\theta}_r(g)\,d\nu(g)=1$
and 
$\|\mathcal{D}_Z({\theta}_r)\|_{L^{\infty}(G)}\ll_{\ell_Z,d} r^{1-d^2-\ell_Z}$
for every monomial $Z=Y_1^{\ell_1}\cdots Y_a^{\ell_a}$, where $\ell_Z:=\deg(Z)$. 
Let us define
\begin{align*}
\phi_r(x):=\theta_r* \chi_{\widetilde{\Delta}_{3r/2}'}(x)=\int_G\theta_r(g)\chi_{\widetilde{\Delta}'_{3r/2}}(g^{-1}x)\,\text{d}\nu(g). 
\end{align*}
Then by the same arguments as in the proof of \cite[Lemma 6.5]{KleinbockStrombergssonYu2022} using the relation \eqref{equ:inclre} in place of \cite[(6.12)]{KleinbockStrombergssonYu2022}, we see that $\phi_r$  satisfies the first property in \eqref{equ:smapp}. 
Moreover, again following arguments from the proof of \cite[Lemma 6.5]{KleinbockStrombergssonYu2022},
by using the properties of $\theta_r$ one verifies that for any $x\in X_d$ and any monomial $Z=Y_1^{\ell_1}\cdots Y_a^{\ell_a}$, we have 
$|\cD_Z(\phi_r)(x)|\ll_{\ell_Z, d}r^{-\ell_Z}$. 
Hence $\phi_r$ also satisfies the second property in \eqref{equ:smapp}. 
\end{proof}

\section{Proof of Theorem \ref*{thm:improvedirichleconwei}}
\label{mainthmpfSEC}

In this section we give the proof of Theorem \ref{thm:improvedirichleconwei}. 
Let $(\bm\alpha, \bm\beta)\in \R^{m}\times \R^n$ be the two weight vectors as in Theorem \ref{thm:improvedirichleconwei}. Recall we have set $\omega_1$ and $\omega_2$ as in Proposition \ref{prop:dyinput}. 
Set also 
\begin{align*}
\alpha_{\min}:=\min\{\alpha_i: 1\leq i\leq m\}\quad \text{and}\quad \beta_{\min}:=\min\{\beta_j: 1\leq j\leq n\}.
\end{align*} 
Let $\psi$ be as in Theorem \ref{thm:improvedirichleconwei}, that is, $\psi$ satisfies \eqref{equ:con2psi} and \eqref{equ:con1psinew}. Moreover, in view of Lemma \ref{lem:redlem1}, we may further assume that $\psi$ satisfies \eqref{equ:lowerbdpsi}. 
For such $\psi$, let $r=r_{\psi}$ be as in Proposition \ref{prop:dyinput}. Then by Proposition \ref{prop:dyinput}, $r(s)\in (0,\frac{1}{d})$ for all $s\geq s_0$ and \eqref{equ:qusidecr} holds with $C_r=(2C_{\psi})^2$. 

\subsection{Convergence case}
In this section we prove the convergence case of Theorem \ref{thm:improvedirichleconwei}. The proof is almost identical to that of the convergence case of \cite[Theorem 1.2]{KleinbockStrombergssonYu2022} with mild modifications 
due to the fact that $r$ is not necessarily decreasing, but satisfies the weaker condition \eqref{equ:qusidecr}.
Assume that the series in \eqref{equ:zeroonelaw} 
converges. %
First, by  \eqref{equ:equvserdiv}, the second series in \eqref{equ:equvserdiv} also converges. This shows that $\lim_{j\to\infty}F_{\psi}(e^j)=0$, which together with the assumption \eqref{equ:con1psinew} implies that $\lim_{t\to\infty}F_{\psi}(t)=0$, or equivalently, $\lim_{s\to\infty}r(s)=0$. 
Here the latter claim follows from \eqref{divequivpf1}.
Moreover, by \eqref{equ:equvdiverserefpsi} (which applies
since $\lim_{t\to\infty}F_{\psi}(t)=0$) and \eqref{equ:diequiv}, the fact that the series in \eqref{equ:zeroonelaw} converges implies that the series $\sum_k r(k)^{\varkappa_d}\log^{\lambda_d}\bigl(\tfrac{1}{r(k)}\bigr)$ also converges.

Now for any $k>s_0$, let $\rho_k:=\omega_1 C_r r(k)$. Since $\lim_{s\to\infty}r(s)=0$, up to enlarging $s_0$ if necessary we may assume $\rho_k\in (0, \frac12)$ for all $k>s_0$. 
In view of the second inclusion relation in \eqref{equ:diinclurela} and the Borel-Cantelli lemma, it suffices to show $\sum_k\mathrm{Leb}(\overline{E}_k)<\infty$, where for each integer $k\geq s_0$,
\begin{align*}
\overline{E}_k=\left\{A\in \mathrm{M}_{m,n}(\R/\Z): g_k\Lambda_A\in \widetilde{\Delta}_{\rho_k}\right\}.
\end{align*}
For this, note that by \cite[Lemmas 6.5 and 7.1]{KleinbockStrombergssonYu2022} there exists a sequence $\{\phi_{\rho_k}\}_{k>s_0}\subset C_c^{\infty}(X_d)$ satisfying
\begin{align}\label{equ:appabo}
\chi_{\widetilde{\Delta}_{\rho_k}}\leq \phi_{\rho_k},\qquad \forall\, k\geq s_0,
\end{align}
and 
\begin{align}\label{equ:appconv}
\sum_{k>s_0}\int_{\mathcal{Y}}\phi_{\rho_k}(g_k \Lambda_A)\,\mathrm{d} A<\infty,
\end{align}
provided that $\lim_{k\to\infty}\rho_k=0$ and $\sum_k \rho_k^{\varkappa_d}\log^{\lambda_d}\bigl(\tfrac{1}{\rho_k}\bigr)<\infty$. The latter two conditions clearly hold since $\rho_k=\omega_1 C_r r(k)$ and $\{r(k)\}_{k>s_0}$ satisfies 
$\lim_{k\to\infty}r(k)=0$ and $\sum_k r(k)^{\varkappa_d}\log^{\lambda_d}\bigl(\tfrac{1}{r(k)}\bigr)<\infty$. From \eqref{equ:appabo} and \eqref{equ:appconv} we get %
\begin{align*}%
\sum_{k>s_0}\textrm{Leb}(\overline{E}_k)&=\sum_{k>s_0}\int_{\mathcal{Y}}\chi_{\widetilde{\Delta}_{\rho_k}}(g_k\Lambda_A)\,\mathrm{d} A\leq \sum_{k>s_0}\int_{\mathcal{Y}}\phi_{\rho_k}(g_k \Lambda_A)\,\mathrm{d} A<\infty.
\end{align*}
This finishes the proof of the convergence case. 

\subsection{Divergence case}\label{divcaseSEC}

The remainder of this paper is devoted to the proof of the divergence case of Theorem \ref{thm:improvedirichleconwei}. The main work will be to prove the following proposition.
\begin{Prop}\label{keyProp}
Fix $s_0>0$. Let $r$ be a continuous function $r: [s_0,\infty)\to (0,\frac{1}{d})$ satisfying \eqref{equ:qusidecr} %
and 
\begin{align}\label{equ:rdiver}
\sum_{k>s_0}r(k)^{\varkappa_d}\log^{\lambda_d}\bigl(\tfrac{1}{r(k)}\bigr)=\infty.
\end{align}
Then for every $a\in(0,1)$ we have $\mathrm{Leb}(\mathbf{NDI}_{a}(r))=1$, where
\begin{align}\label{DEF:ndir}
\mathbf{NDI}_{a}(r):=\left\{A\in \mathrm{M}_{m,n}(\R/\Z): g_k\Lambda_A\in \widetilde{\Delta}'_{a\, r(k+1)}\ \textrm{for infinitely many }k\in\N
\right\}.
\end{align}
\end{Prop}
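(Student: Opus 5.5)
Fix $a\in(0,1)$ and set $\rho_k:=a\,r(k+1)$ and $E_k:=\{A: g_k\Lambda_A\in\widetilde{\Delta}'_{\rho_k}\}$. We will prove $\mathrm{Leb}(\limsup E_k)=1$ with a quantitative Borel--Cantelli lemma of Kochen--Stone/Erdős--Rényi type. That lemma gives
\[
\mathrm{Leb}(\limsup_k E_k)\geq\limsup_{N\to\infty}\frac{\bigl(\sum_{k\le N}\mathrm{Leb}(E_k)\bigr)^2}{\sum_{k,l\le N}\mathrm{Leb}(E_k\cap E_l)}.
\]
To upgrade positive measure to full measure, we will then use ergodicity-type reasoning. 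For instance, one can run the argument on $A$ restricted to a small box $B\subset\mathrm{M}_{m,n}(\R/\Z)$ (using \eqref{equ:bgindfor} with $f_0$ supported in $B$), obtaining $\mathrm{Leb}(\limsup E_k\cap B)\ge c\,\mathrm{Leb}(B)$ for a uniform $c>0$. Lebesgue density then gives full measure.

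We first reduce to a convenient index set. We may assume $r(s)\to 0$, or pass to the subsequence where $r$ is small. If $r$ does not tend to $0$, the targets have measure bounded below along an unbounded set, and the argument below only gets easier.

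\textbf{Step 1: lower bound for the sum of measures.} Using $\phi_{\rho/2}$ from Lemma \ref{lem:smapp} as a lower approximation (since $\chi_{\widetilde\Delta'_{\rho/2}}\le\phi_{\rho/2}\le\chi_{\widetilde\Delta'_{\rho}}$), together with \eqref{equ:effequ} and Proposition \ref{prop:meaest}, we get
\[
\mathrm{Leb}(E_k)\gg \rho_k^{\varkappa_d}\log^{\lambda_d}(1/\rho_k)-O(\rho_k^{-\ell}e^{-\delta k}).
\]
The error term is summable once $\rho_k\ge e^{-\delta k/(2\ell)}$. Indices with smaller $\rho_k$ may be discarded by replacing $r$ with $\max\{r,e^{-\delta' s}\}$, whose series contribution is summable. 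By \eqref{equ:rdiver} and \eqref{equ:qusidecr}, which let us compare $r(k+1)$ with $r(k)$, the sum $\sum_k\mathrm{Leb}(E_k)$ diverges. Analogously, upper approximations $\phi_{\rho_k}$ bound $\mathrm{Leb}(E_k)\ll\mu_d(\widetilde\Delta'_{2\rho_k})$ up to a summable error.

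\textbf{Step 2: the pair correlations $\mathrm{Leb}(E_k\cap E_l)$ for $k<l$.} We split into three ranges.

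Long range, $l-k\ge C\log(1/\min(\rho_k,\rho_l))$, and also $k$ large. Here \eqref{equ:effmixing} with smooth majorants gives $\mathrm{Leb}(E_k)\mathrm{Leb}(E_l)$ plus an error $\rho_k^{-\ell}\rho_l^{-\ell}e^{-\delta\min(k,l-k)}$. After the truncation in Step 1, the error is controlled and summable relative to $(\sum\mathrm{Leb}E_k)^2$. We may need \eqref{equ:dmix2} to handle the $e^{-\delta s_1}$ term efficiently.

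Short range, $l-k\le c\log(1/\rho_k)$. This is the key new input, the short-range mixing or disjointness estimate referred to as Proposition \ref{prop:shortmix}, and it is exactly where \eqref{equ:keyproperty} is used. Suppose $g_k\Lambda\in\widetilde\Delta'_{\rho_k}$. Then $g_k\Lambda$, up to a $g_{-s}$ with $s<1/2$, contains a vector $\bm v\approx\bm e_1$ with the other coordinates at most $\sqrt{\rho_k}$. Applying $g_{l-k}$ stretches or contracts $\bm v$ coordinatewise by $e^{\pm(l-k)}$-type factors. When $1\le l-k$ and $e^{(l-k)}\sqrt{\rho}$ is still small, the image has a coordinate of size $\ge e^{\alpha_{\min}/2}>1+O(\rho)$ or a sup norm $<1$. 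The first option is incompatible with landing in $R_{\rho_l}$ at time $l$. The second would produce a vector of norm $<1-\rho$, contradicting $\Delta\le\rho_l$. So we aim to show $E_k\cap E_l=\emptyset$ for $1\le l-k\le c\log(1/\rho)$. This forces a careful check of how the first coordinate and the $\sqrt r$ coordinates interact with the weights $\alpha_i,\beta_j$. The problematic case is when $\bm e_1$ itself gets expanded and the new short vector does not come from $\bm v$, and we would handle it by tracking both $\Delta$ and the $R_r$ vector. Here \eqref{equ:qusidecr} ensures $\rho_k\asymp\rho_l$ in this range, since $l-k\ll\log(1/\rho)\le k^\eta$ for relevant $k$. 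This comparison requires discarding indices where $\rho_k<e^{-k^\eta}$, which is again harmless after the truncation in Step 1.

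Intermediate range, $c\log(1/\rho)\le l-k\le C\log(1/\rho)$. We would bound $\mathrm{Leb}(E_k\cap E_l)\le\mathrm{Leb}(E_l\cap\{g_k\Lambda_A\in\text{larger set}\})$ via mixing with a coarser smoothing scale. That is, we use smooth majorants at scale $\rho^{\epsilon}$ instead of $\rho$, so that norms cost only $\rho^{-\epsilon\ell}$, which is beaten by $e^{-\delta c\log(1/\rho)}$. This gives a bound $\ll\mathrm{Leb}(E_k)\,\rho_l^{\varkappa_d-\epsilon'}$. Summing over the $O(\log(1/\rho))$ values of $l$ gives $\ll\mathrm{Leb}(E_k)$, which is negligible against $(\sum\mathrm{Leb}E_k)^2$ because the latter diverges quadratically.

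Combining Steps 1 and 2, the numerator and denominator in the Kochen--Stone ratio are comparable, so the limsup of the ratio is bounded below by a positive constant. The upgrade argument described at the start then gives full measure. The main obstacle is the short-range disjointness in Step 2: making the geometry of \eqref{equ:keyproperty} under $g_{l-k}$ rigorous uniformly in the weights, including the thickening by $g_{-s}$ with $s<1/2$. The second hardest point is organizing the truncations so that \eqref{equ:qusidecr} gives $\rho_k\asymp\rho_l$ exactly throughout the short and intermediate ranges.
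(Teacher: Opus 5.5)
There is a genuine gap in your intermediate range $c\log(1/\rho)\le l-k\le C\log(1/\rho)$, and it is not a vacuous range. Proposition \ref{prop:shortmix} gives disjointness only for $l-k\le\frac14\log(1/\rho)$. With majorants of $C^\ell$-norm $\asymp\rho^{-\ell}$, the mixing error $\rho^{-2\ell}e^{-\delta(l-k)}$ beats the main term $\approx\rho^{2\varkappa_d}$ only once $\delta(l-k)\gtrsim(\ell+\varkappa_d)\log(1/\rho)$, and Theorem \ref{thm:effequhorsp} provides only some small $\delta>0$.

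Coarser smoothing does not bridge this. A majorant smoothed at scale $\rho^{\epsilon}$ has $C^1$-norm $\asymp\rho^{-\epsilon}$. It is therefore $\ge\frac12$ on a $\asymp\rho^{\epsilon}$-neighbourhood of $\widetilde{\Delta}'_{\rho}$, a set of measure $\gg\rho^{\epsilon(d^2-1)}$. For the small $\epsilon$ you need ($\epsilon\ell<c\delta$), this exceeds $\mu_d(\widetilde{\Delta}'_{\rho})\approx\rho^{\varkappa_d}$ by a negative power of $\rho$.

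\begin{itemize}
\item If you coarsen both functions, the main term is far above your claimed bound $\mathrm{Leb}(E_k)\,\rho_l^{\varkappa_d-\epsilon'}$.
\item If you keep one function at scale $\rho$ to retain a factor $\rho^{\varkappa_d}$, the error is $\rho^{-(1+\epsilon)\ell}e^{-\delta(l-k)}$. At $l-k\approx c\log(1/\rho)$ this equals $\rho^{c\delta-(1+\epsilon)\ell}$, which is not even $o(1)$.
\end{itemize}

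Two smaller points. First, your short-range sketch runs in the wrong direction. Since $g_{l-k}$ expands the first coordinate of $\bm{v}\approx\bm{e}_1$, the branch ``sup norm $<1$'' never occurs, and the other branch gives no contradiction. The correct argument, as in Proposition \ref{prop:shortmix}, pulls the $R_\rho$-vector of the \emph{later} lattice back by $g_{-(l-k)}$. Its first $m$ coordinates shrink, and its last $n$ coordinates, being $<\sqrt{\rho}$, stay below $e^{-\rho}$ for $l-k\le\frac14\log(1/\rho)$. This contradicts $\Delta\le\rho$ at the earlier time. Second, \eqref{equ:qusidecr} gives only $\rho_l\le C_r\rho_k$, not $\rho_k\asymp\rho_l$.

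What is missing is the paper's packing argument together with a reduction to polynomial rates.

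\emph{Reduction.} Lemma \ref{lem:redu} reduces to $k^{-\gamma_d}\le r(k)\le k^{-\gamma_d'}$, with $\gamma_d>1/\varkappa_d$ and $\gamma_d'<\eta/\varkappa_d$.
\begin{itemize}
\item The lower cap is harmless by the convergence case. It makes every $\rho^{-\ell}$ polynomial in $j$, so mixing handles all pairs with $\min\{i,j-i\}\ge\Theta\log j$, and \eqref{equ:dmix2} handles those with $i\le\Theta\log j$.
\item The upper cap is where \eqref{equ:qusidecr} is really used: through Lemma \ref{lem:simplelemvar} it shows that $\min\{r,s^{-\gamma_d'}\}$ still satisfies \eqref{equ:rdiver}.
\end{itemize}

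\emph{Packing.} For the whole range $1\le j-i\le\Theta\log j$, no mixing is used.
\begin{itemize}
\item Equidistribution \eqref{equ:effequ} of $\Phi_{i,j}(x)=\phi_{\rho_i}(x)\phi_{\rho_j}(g_{j-i}x)$ at time $i$, whose norm is $\ll e^{2\ell(j-i)}\rho_i^{-\ell}\rho_j^{-\ell}$ and hence polynomial in $j$, reduces the correlation to $\mu_d\bigl(g_{j-i}\widetilde{\Delta}'_{2\rho_i}\cap\widetilde{\Delta}'_{2\rho_j}\bigr)$.
\item Let $\rho_{i_j}$ be the maximum of $\rho_i$ over $j-\Theta\log j\le i<j$. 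Any $J_j+1$ consecutive translates $g_k\widetilde{\Delta}'_{2\rho_{i_j}}$ are pairwise disjoint, so no comparability $\rho_i\asymp\rho_j$ is needed.
\item Hence the sum over $i$ is at most $\bigl(\Theta\log j/(J_j+1)+1\bigr)\mu_d(\widetilde{\Delta}'_{2\rho_j})$.
\item The upper cap makes $J_j\gg\log j$, comparable to the length of the non-mixing range, so this is $O(b_j)$.
\end{itemize}
The outcome is $Q_{k_1,k_2}\ll\sum_k b_k$. Since $\sum_k b_k=\infty$, the divergence Borel--Cantelli lemma gives full measure directly, and your Kochen--Stone plus Lebesgue-density upgrade becomes unnecessary.
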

\begin{proof}[Proof of the divergence case of Theorem \ref{thm:improvedirichleconwei} assuming Proposition \ref{keyProp}]
Let $\psi$ be as in Theorem~\ref{thm:improvedirichleconwei} and such that the series \eqref{equ:zeroonelaw} diverges.
Recall that %
we may further assume that $\psi$ satisfies \eqref{equ:lowerbdpsi}. 
Set $r=r_{\psi}$.
Now by \eqref{equ:equvdiverserefpsi} and \eqref{equ:diequiv},
the divergence relation \eqref{equ:rdiver} holds.
Let $C_r$ be a bounding constant as in \eqref{equ:qusidecr}, and set $a:=\omega_2 C_r^{-1}$.
By the first inclusion relation in \eqref{equ:diinclurela} and the relation $\widetilde{\Delta}_r'\subset \widetilde{\Delta}_r$, we have 
$$
\mathbf{NDI}_a(r)\subset \limsup_{k\to\infty}\underline{E}_k \subset \mathbf{DI}_{\bm\alpha,\bm\beta}^c(\psi)\subset \mathrm{M}_{m,n}(\R/\Z).
$$
Here 
$$
\underline{E}_k=\left\{A\in \mathrm{M}_{m,n}(\R/\Z): g_k\Lambda_A\in \widetilde{\Delta}_{a \, %
r(k+1)}\right\}
$$ 
is  as in \eqref{equ:shriking}. 
But by Proposition \ref{keyProp} we have $\mathrm{Leb}(\mathbf{NDI}_a(r))=1$, which then implies $\mathrm{Leb}(\mathbf{DI}_{\bm\alpha,\bm\beta}^c(\psi))=1$ as desired.
\end{proof}

\subsubsection{A reduction lemma}
The main goal of the remaining sections is now to prove Proposition~\ref{keyProp}. Following the ideas of \cite[Lemmas 12.5, 12.6]{KhalilLuethi2023}, we first prove a reduction  lemma that allows us to further assume that $r(s)$ has  polynomial decay rates. 
\begin{Lem}\label{lem:redu}
Let $\varkappa_d=\frac{d^2+d-4}{2}$ be as in \eqref{equ:tpars} and $\eta\in (0,1)$ be as in \eqref{equ:con1psinew}. Fix two parameters
\begin{align}\label{equ:gamdassup}
\gamma_d>\frac{1}{\varkappa_d}\quad\text{and}\quad 0<\gamma_d'<\frac{\eta}{\varkappa_d}.
\end{align}
When proving Proposition \ref{keyProp}, there is no loss of generality to also assume  %
\begin{align}\label{equ:reduonr}
k^{-\gamma_d}\leq r(k)\leq k^{-\gamma_d'},\qquad \forall\, k\geq s_0.
\end{align}
\end{Lem}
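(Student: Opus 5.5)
Given $r$ as in Proposition \ref{keyProp}, I will construct a modified function $\tilde r$ that satisfies \eqref{equ:reduonr}, \eqref{equ:qusidecr} (possibly with a larger constant) and \eqref{equ:rdiver}, and that is small enough that $\mathbf{NDI}_a(\tilde r)$ is, up to a null set, contained in $\mathbf{NDI}_{a'}(r)$ for a suitable $a'\in(0,1)$. The natural choice is
\[
\tilde r(s):=\max\bigl\{\min\{r(s),\,s^{-\gamma_d'}\},\,s^{-\gamma_d}\bigr\}.
\]
Since both truncating functions are continuous, $\tilde r$ is continuous. It also satisfies \eqref{equ:qusidecr}. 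For $s_1\le s_2\le s_1+s_1^\eta$ we have $s_2^{-\gamma}\asymp s_1^{-\gamma}$, and $\min$ and $\max$ of functions that are quasi-decreasing on these windows stay quasi-decreasing. The constant worsens only by a bounded factor.

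For divergence, I split the indices. Let $f(x)=x^{\varkappa_d}\log^{\lambda_d}(1/x)$, which is increasing on small $x$. On $\{k: r(k)\ge k^{-\gamma_d}\}$ we get $\tilde r(k)=\min\{r(k),k^{-\gamma'_d}\}$.

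If $\sum_k f(\min\{r(k),k^{-\gamma_d'}\})$ diverges we are done. Otherwise the contribution from the set where $r(k)\ge k^{-\gamma_d'}$ must diverge. I need to rule this out by showing that the sum over $k$ with $\tilde r(k)=k^{-\gamma_d'}$ diverges whenever such $k$ occur often. This is where \eqref{equ:qusidecr} and $\gamma_d'<\eta/\varkappa_d$ enter, following \cite[Lemma 12.5]{KhalilLuethi2023}. If $r(k_1)\ge k_1^{-\gamma'_d}$, then for all $k\in[k_1,k_1+k_1^\eta]$ we have $r(k)\le C_r r(k_1)$, which does not bound $r(k)$ below. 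So I will instead argue in the reverse direction: on each window of length $k^\eta$ preceding a point $k_2$, quasi-decreasing gives $r(k)\ge C_r^{-1}r(k_2)$. Hence if $r(k_2)\ge k_2^{-\gamma'_d}$, then $\tilde r(k)\gg k^{-\gamma'_d}$ on $[k_2-\tfrac12k_2^\eta,k_2]$. That window contributes $\gg k_2^{\eta-\gamma'_d\varkappa_d}$ up to logarithmic factors, and this tends to infinity. So infinitely many such $k_2$ force divergence of $\sum f(\tilde r(k))$. In the other case, $r(k)<k^{-\gamma'_d}$ eventually, and then $\sum f(r(k))=\infty$ holds with $\tilde r=\max\{r,k^{-\gamma_d}\}\ge r$.

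For the lower truncation, $\sum_k f(k^{-\gamma_d})<\infty$ because $\gamma_d\varkappa_d>1$. Removing the indices where $\tilde r(k)=k^{-\gamma_d}>r(k)$ therefore keeps the sum divergent. The case analysis here is the step I expect to be the main obstacle, and I would write it carefully around the two cases just described.

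For the transfer, let $\hat r:=\min\{r,s^{-\gamma'_d}\}\le r$. I will apply the restricted Proposition \ref{keyProp} to $\tilde r$, obtaining $\mathrm{Leb}(\mathbf{NDI}_a(\tilde r))=1$. The targets are monotone in the radius: $\Delta'_\rho\subset\Delta'_{\rho'}$ for $\rho\le\rho'$, since both $\Delta^{-1}[0,\rho]$ and $R_\rho$ grow with $\rho$. So hits of targets of radius $a\tilde r(k+1)$ at indices where $\tilde r(k+1)\le r(k+1)$ give hits for $r$. The remaining indices are those with $\tilde r(k+1)=(k+1)^{-\gamma_d}>r(k+1)$. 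The targets there have measure $\ll f((k+1)^{-\gamma_d})$ by Proposition \ref{prop:meaest}. Using Theorem \ref{thm:effequhorsp} with the smooth majorants of Lemma \ref{lem:smapp}, the sum of $\mathrm{Leb}$ of these hitting events converges; the errors $e^{-\delta k}k^{\ell\gamma_d}$ are summable. By Borel–Cantelli, almost every $A$ hits only finitely many of these. Therefore almost every $A$ lies in $\mathbf{NDI}_a(r)$, which proves the reduction.
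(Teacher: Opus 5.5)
Your argument is correct, but it reaches the conclusion by a somewhat different route from the paper.

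\textbf{How the paper proceeds.} It performs the two truncations separately.
\begin{itemize}
\item For the lower truncation $r_1=\max\{r,s^{-\gamma_d}\}$, it applies the restricted proposition and then uses $\mathbf{NDI}_a(r_1)\subset\mathbf{NDI}_a(r)\cup\mathbf{NDI}_a(s^{-\gamma_d})$. The last set is null by the convergence-case Borel--Cantelli argument. This is the same as your transfer step; you simply run it on the single function $\tilde r$, using Lemma~\ref{lem:smapp} and Proposition~\ref{prop:meaest} directly instead of passing to $\widetilde\Delta_r$.
\item For the upper truncation $r_2=\min\{r,s^{-\gamma_d'}\}$, it proves divergence through the auxiliary Lemma~\ref{lem:simplelemvar}. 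There, $\mathbb{Z}\cap[K,\infty)$ is cut into blocks of length $\asymp k_l^{\eta}$, which yields $\sum_k\min\{f(r(k)),k^{-\eta}\}=\infty$. The proof then concludes using $k^{-\eta}<f(k^{-\gamma_d'})$.
\end{itemize}

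\textbf{Where you differ.} Your dichotomy replaces Lemma~\ref{lem:simplelemvar}.
\begin{itemize}
\item If $r(k)<k^{-\gamma_d'}$ eventually, then $\min\{r,s^{-\gamma_d'}\}=r$ eventually, and \eqref{equ:rdiver} gives divergence.
\item Otherwise, take any $k_2$ with $r(k_2)\geq k_2^{-\gamma_d'}$. Applying \eqref{equ:qusidecr} backwards gives $r(k)\geq C_r^{-1}k^{-\gamma_d'}$ on the window $[k_2-\tfrac12k_2^{\eta},k_2]$. That window contributes $\gg k_2^{\eta-\gamma_d'\varkappa_d}$, which is unbounded.
\end{itemize}

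\textbf{Comparison.} Your version is shorter. It also shows exactly where $\gamma_d'<\eta/\varkappa_d$ enters: in the second case divergence follows from \eqref{equ:qusidecr} alone. The paper's route isolates a reusable statement about quasi-decreasing sequences and keeps the two reductions modular.

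\textbf{Two small remarks.}
\begin{itemize}
\item The constant in \eqref{equ:qusidecr} does not actually worsen. Taking $\min$ with $s^{-\gamma_d'}$ and $\max$ with $s^{-\gamma_d}$ preserves it with the same $C_r$, since $C_r\geq1$ and these powers are decreasing.
\item Your remark about discarding the indices where $\tilde r(k)=k^{-\gamma_d}$ is not needed for divergence. We have $\tilde r\geq\min\{r,s^{-\gamma_d'}\}$, and $f$ is increasing near $0$.
\end{itemize}
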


To prove Lemma \ref{lem:redu} we need the following simple auxiliary lemma. 
\begin{Lem}\label{lem:simplelemvar}
Let $\{a_k\}_{k\in \N}$ be a sequence of positive numbers satisfying $\sum_{k=1}^{\infty}a_k=\infty$ and
\begin{align}\label{equ:condonavar}
\exists\ C\geq 1,\ \alpha\in (0,1)\ \text{such that}\ a_{k_2}\leq C a_{k_1},\qquad \forall\, 1\ll k_1\leq k_2\leq k_1+k_1^{\alpha}.
\end{align} 
Then the sequence $\{b_k:=\min\{a_k, k^{-\alpha}\}\}_{k\in \N}$ satisfies
\begin{align}\label{equ:claimonbvar}
b_{k_2}\leq C b_{k_1},\quad \forall\, 1\ll k_1\leq k_2\leq k_1+k_1^{\alpha}\quad \text{and}\quad \sum_{k=1}^{\infty}b_k=\infty.
\end{align}
\end{Lem}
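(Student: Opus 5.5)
The plan is to check the quasi-monotonicity of $\{b_k\}$ directly, and then to derive the divergence of $\sum b_k$ by a block decomposition of $\N$ into intervals of length about $k^{\alpha}$.

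\emph{Quasi-monotonicity.} The sequence $k\mapsto k^{-\alpha}$ is decreasing. Hence for $1\ll k_1\leq k_2\leq k_1+k_1^{\alpha}$ we have
\[
b_{k_2}=\min\{a_{k_2},k_2^{-\alpha}\}\leq \min\{Ca_{k_1},k_1^{-\alpha}\}\leq C\min\{a_{k_1},k_1^{-\alpha}\}=Cb_{k_1},
\]
where the last step uses $C\geq 1$. This gives the first part of \eqref{equ:claimonbvar}.

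\emph{Divergence.} Assume for contradiction that $\sum b_k<\infty$. Choose $K$ large so that \eqref{equ:condonavar} holds for $k_1\geq K$, and define blocks recursively by $k^{(0)}=K$ and $k^{(j+1)}=k^{(j)}+\lfloor (k^{(j)})^{\alpha}\rfloor$. Write $I_j=[k^{(j)},k^{(j+1)})$; its length is $L_j\asymp (k^{(j)})^{\alpha}$, and these blocks tile $[K,\infty)$. Each $k\in I_j$ satisfies $k^{(j)}\leq k\leq k^{(j)}+(k^{(j)})^{\alpha}$, so \eqref{equ:condonavar} and the first part give
\[
a_k\leq Ca_{k^{(j)}},\qquad b_{k^{(j)}}\geq C^{-1}b_{k},\qquad k\in I_j .
\]
We sort the blocks into two types.

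\emph{Type 1:} $a_{k^{(j)}}\geq (k^{(j)})^{-\alpha}$. Then $b_{k^{(j)}}=(k^{(j)})^{-\alpha}$. For $k$ in the next block $I_{j+1}$, we have $k\leq k^{(j)}+2(k^{(j)})^{\alpha}$, which lies within two steps of size $(\cdot)^\alpha$ of $k^{(j)}$ in the reverse direction. Applying the reverse inequality twice (the first part applied from $k^{(j)}$ to $k^{(j+1)}$, and then from $k^{(j+1)}$ to $k$) does not give a lower bound. Instead, I will obtain the lower bound inside the block itself: for $k\in I_j$ we have $b_k\geq \ldots$, but there is no lower bound from above-type monotonicity.

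So the argument has to be reorganized. The quasi-monotone condition bounds later terms from above by earlier ones. Therefore within block $I_j$ the terms are controlled by the \emph{first} term: $\sum_{k\in I_j}a_k\leq CL_ja_{k^{(j)}}$. Since $\sum a_k=\infty$, this gives $\sum_j L_j a_{k^{(j)}}=\infty$. Likewise, for $k\in I_{j-1}$ we have $k^{(j)}\leq k+2k^{\alpha}$, so applying \eqref{equ:condonavar} twice from $k$ gives $b_{k^{(j)}}\leq C^2 b_k$. Summing over $k\in I_{j-1}$ yields
\[
L_{j-1}b_{k^{(j)}}\leq C^2\sum_{k\in I_{j-1}}b_k .
\]
Since $L_j\asymp L_{j-1}$, it suffices to show $\sum_j L_j b_{k^{(j)}}=\infty$. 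Now
\[
L_jb_{k^{(j)}}\asymp \min\{L_ja_{k^{(j)}},\,1\}.
\]
If infinitely many $j$ have $L_ja_{k^{(j)}}\geq 1$, then infinitely many terms of $\sum_j L_jb_{k^{(j)}}$ are $\gg 1$, and the sum diverges. Otherwise, for all large $j$ the minimum equals $L_ja_{k^{(j)}}$, and the sum diverges because $\sum_j L_ja_{k^{(j)}}=\infty$. In either case we reach a contradiction, which proves the second part of \eqref{equ:claimonbvar}.

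The main obstacle is the one exposed above: the condition \eqref{equ:condonavar} only lets us bound later terms by earlier ones. The block argument has to be oriented so that it uses exactly that direction, together with the block length $k^{\alpha}$ being matched to the truncation $k^{-\alpha}$, which is what makes the comparison $L_jb_{k^{(j)}}\asymp\min\{L_ja_{k^{(j)}},1\}$ work.
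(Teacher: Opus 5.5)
Your proof is correct and is essentially the paper's argument. You bound each block sum of $a_k$ by (block length)$\times$(first term) to get $\sum_j L_j a_{k^{(j)}}=\infty$, pass to $\min\{L_ja_{k^{(j)}},1\}$, and then bound each block sum of $b_k$ from below by the first term of the next block using the quasi-monotonicity of $b$; you should simply delete the abandoned ``Type 1'' paragraph, and note that for $k\in I_{j-1}$ one already has $k^{(j)}\le k+k^{\alpha}$, so a single application with constant $C$ (rather than $C^2$) suffices.
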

\begin{proof}[Proof of Lemma \ref{lem:simplelemvar}]
Let $b_k=\min\{a_k, k^{-\alpha}\}$ as in the statement of the lemma.
The first claim of \eqref{equ:claimonbvar} follows immediately from \eqref{equ:condonavar}  and the fact that $\{k^{-\alpha}\}$ is decreasing. For the second claim, fix an integer $K\in\N$ such that $K^{\alpha}>1$ and \eqref{equ:condonavar} holds for all $K\leq k_1\leq k_2\leq k_1+k_1^{\alpha}$. Decompose the set $\Z\cap [K, \infty)$ into infinitely many blocks $B_l=\{k_l,k_l+1,\cdots, k_l+{\lfloor{k_l^{\alpha}}\rfloor}\}$ with $k_1=K$, $k_2=k_1+{\lfloor{k_1^{\alpha}}\rfloor}+1,\cdots, k_{l+1}=k_l+{\lfloor{k_l^{\alpha}}\rfloor}+1,\cdots$. Then by \eqref{equ:condonavar} we have
\begin{align}\label{comparableseries}
  \sum_{l=2}^{\infty} k_{l}^{\alpha}a_{k_{l}}\asymp \sum_{l=1}^{\infty} k_l^{\alpha}a_{k_{l+1}}\ll_C  \sum_{l=1}^{\infty}\sum_{k\in B_l}a_k\ll_{C} \sum_{l=1}^{\infty}k_l^{\alpha}a_{k_l}.
\end{align}
We also have $\sum_{l=1}^{\infty}\sum_{k\in B_l}a_k=\sum_{k\geq K}a_k=\infty$ by assumption; hence it follows that
$\sum_{l=1}^{\infty}k_l^{\alpha}a_{k_l}=\infty$.
This in turn implies that
\begin{align}\label{bsuminfty}
\sum_{l=2}^{\infty}k_l^{\alpha}b_{k_l}= \sum_{l=2}^{\infty}\min\{k_l^{\alpha}a_{k_l}, 1\}=\infty.
\end{align}
Indeed, if $\min\{k_l^{\alpha}a_{k_l}, 1\}=1$ for infinitely many $l$, then it is clear that \eqref{bsuminfty} holds; and otherwise $\min\{k_l^{\alpha}a_{k_l}, 1\}=k_l^{\alpha}a_{k_l}$ for all sufficiently large $l$, 
so that \eqref{bsuminfty} holds because of $\sum_{l=1}^{\infty}k_l^{\alpha}a_{k_l}=\infty$.
Now recall that $\{b_k\}$ satisfies the first claim in \eqref{equ:claimonbvar};
this implies that the analogue of \eqref{comparableseries} also holds for $\{b_k\}$;
and hence via \eqref{bsuminfty} we conclude that
\begin{align*}
\sum_{k\geq K}b_k=\sum_{l=1}^{\infty}\sum_{k\in B_l}b_k
\gg_C\sum_{l=2}^{\infty}k_l^{\alpha}b_{k_l}=\infty.
\end{align*}
This completes the proof of the lemma.
    \end{proof}

\begin{proof}[Proof of Lemma \ref{lem:redu}]
Let $r:(s_0,\infty)\to(0,\frac1d)$ be given as in the statement of 
Proposition \ref{keyProp},
i.e.\ so that \eqref{equ:qusidecr} and \eqref{equ:rdiver} holds.
We will first show that we may assume $r(k)\geq k^{-\gamma_d}$ for all $k\geq s_0$.
To this end, set $r_1(s):=\max\{r(s), s^{-\gamma_d}\}$.  
Since %
$\lim_{s\to\infty}s^{-\gamma_d}=0$, up to enlarging $s_0$, we may assume $r_1(s)\in (0,\frac{1}{d})$ for all $s\geq s_0$. Moreover, one easily checks that $r_1(s)$ is continuous and satisfies \eqref{equ:qusidecr} with the same bounding constant $C_r\geq 1$. Moreover, since $r_1(s)\geq r(s)$, $r_1$ also satisfies \eqref{equ:rdiver}.
Hence by Proposition \ref{keyProp}
\textit{with} the extra assumption ``$r(k)\geq k^{-\gamma_d}$, $\forall k$'',
we have
\begin{align}\label{LebNDIar1eq1}
\mathrm{Leb}(\mathbf{NDI}_a(r_1))=1,\qquad \forall a\in(0,1).
\end{align}
Note also that, since $\gamma_d>\frac{1}{\varkappa_d}$, the series 
$
\sum_k (k^{-\gamma_d})^{\varkappa_d}\log^{\lambda_d}\Bigl(\tfrac{1}{k^{-\gamma_d}}\Bigr)
$
converges. By the same argument as in the proof of the convergence case given in the previous section,
this implies that for any given number $a\in(0,1)$, the set
\begin{align*}
\left\{A\in \mathrm{M}_{m,n}(\R/\Z): g_k\Lambda_A\in \widetilde{\Delta}_{a(k+1)^{-\gamma_d}}\ \text{for infinitely many $k\in \N$}\right\}
\end{align*}
is of zero Lebesgue measure. Since $\widetilde{\Delta}_r'\subset \widetilde{\Delta}_r$, 
this implies $\mathrm{Leb}(\mathbf{NDI}_a(s^{-\gamma_d}))=0$.
Next we claim that
\begin{align}\label{NDIr1subsetunion1}
\mathbf{NDI}_a(r_1)\subset \mathbf{NDI}_a(r)\:\cup\:\mathbf{NDI}_a(s^{-\gamma_d}).
\end{align}
To prove this, note that if $A\in\mathbf{NDI}_a(r_1)$, then
there is an infinite subset $S\subset\N\cap[s_0,\infty)$
such that
$g_k\Lambda_A\in \widetilde{\Delta}'_{a\,r_1(k+1)}$ for all $k\in S$.
But for each $k$ %
we have either 
$r_1(k+1)=r(k+1)$ or $r_1(k+1)=(k+1)^{-\gamma_d}$;
hence there exists an infinite subset $S'\subset S$
such that either 
$r_1(k+1)=r(k+1)$ for all $k\in S'$, or
$r_1(k+1)=(k+1)^{-\gamma_d}$ for all $k\in S'$.
In the first case we obtain $A\in\mathbf{NDI}_a(r)$,
and in the second case we obtain $A\in\mathbf{NDI}_a(s^{-\gamma_d})$.
Hence \eqref{NDIr1subsetunion1} is proved.
Finally, combining \eqref{LebNDIar1eq1},
$\mathrm{Leb}(\mathbf{NDI}_a(s^{-\gamma_d}))=0$
and \eqref{NDIr1subsetunion1},
we conclude that
$\mathrm{Leb}(\mathbf{NDI}_a(r))=1$.
This completes the proof of the first reduction.

Next we will show that we may assume $r(k)\leq k^{-\gamma_d'}$ for all $k\geq s_0$.
Let $r_2(s):=\min\{r(s), s^{-\gamma_d'}\}$. 
Then one can similarly check that $r_2$ is a continuous function from $[s_0,\infty)$ to $(0,\frac1d)$, 
and satisfies \eqref{equ:qusidecr} with the same bounding constant $C_r\geq 1$. 
We will prove that $r_2$ also satisfies the divergence relation \eqref{equ:rdiver}.
To this end, set $f(x):=x^{\varkappa_d}\log^{\lambda_d}(\frac{1}{x})$.
Since $f(x)$ is continuous and positive for $0<x<1$,
and increasing for $0<x\leq e^{-\lambda_d/\varkappa_d}$,
there exists a constant $B_1\geq1$ such that $f(x_2)\leq B_1f(x_1)$ for all $0<x_2\leq x_1<\frac1d$.
Note also that $f(x_2)\leq C_r^{\varkappa_d}f(x_1)$ whenever
$0<x_1\leq x_2\leq \min\{C_rx_1,\frac1d\}$, since $\log(\frac1x)$ is decreasing.
Letting $B_2:=\max\{B_1,C_r^{\varkappa_d}\}$, it follows that
$f(x_2)\leq B_2f(x_1)$ for any $x_1,x_2\in(0,\tfrac1d)$ satisfying $x_2\leq C_rx_1$.
Hence, since $r$ satisfies \eqref{equ:qusidecr},
we have
\begin{align*}
f(r(s_1))\leq B_2 f(r(s_2))
\qquad
\forall 1\ll s_1\leq s_2\leq s_1+s_1^{\eta}.
\end{align*}
This shows that the sequence $a_k:=f(r(k))$ (defined for all integers $k\geq s_0$)
satisfies the condition \eqref{equ:condonavar} with  $C=B_2$ and $\alpha=\eta$. 
Recall also that $\sum_{k>s_0}f(r(k))=\infty$,
by \eqref{equ:rdiver}.
Hence by Lemma~\ref{lem:simplelemvar},
\begin{align}\label{minfrkkmetahalfDIV}
\sum_{k>s_0}\min\bigl\{f(r(k)),k^{-\eta}\bigr\}=\infty.
\end{align}
But for each sufficiently large $k$ we have 
$k^{-\eta}<f(k^{-\gamma_d'})$
since $\gamma_d'<\frac{\eta}{\varkappa_d}$,
and thus
\begin{align}\label{minfrkkmetahalfleq}
\min\bigl\{f(r(k)),k^{-\eta}\bigr\}
\leq\min\bigl\{f(r(k)),f(k^{-\gamma_d'})\bigr\}
\leq f(r_2(k)).
\end{align}
Combining \eqref{minfrkkmetahalfDIV} and 
\eqref{minfrkkmetahalfleq} it follows that
$\sum_k f(r_2(k))=\infty$, 
i.e.\ the function $r_2$ satisfies the divergence relation \eqref{equ:rdiver}.
Of course also $r_2(k)\leq k^{-\gamma_d'}$ for all $k$.
Hence by Proposition \ref{keyProp}
\textit{with} the extra assumption ``$r(k)\leq k^{-\gamma_d'}$, $\forall k$'',
we have
$\mathrm{Leb}(\mathbf{NDI}_a(r_2))=1$
for every $a\in(0,1)$.
But clearly $\mathbf{NDI}_a(r_2)\subset\mathbf{NDI}_a(r)$,
since $r_2(s)\leq r(s)$ for all $s$.
Hence also $\mathrm{Leb}(\mathbf{NDI}_a(r))=1$
for every $a\in(0,1)$.
This completes the proof of Lemma \ref{lem:redu}.
\end{proof}

\subsubsection{The new ingredient: a disjointness fact} %

The main new ingredient for our proof to remove the technical condition \eqref{equ:condipsi} is the following simple disjointness statement concerning $g_s$-translates of $\widetilde{\Delta}'_r$ for certain ranges of integer $s$-values.
\begin{Prop}\label{prop:shortmix}
For any $0<r<  \min\{\frac{d\alpha_{\min}}{2d+1}, \frac{\beta_{\min}}{2}, e^{-4} \}$, 
let $J=J_r:=
\lfloor \frac{1}{4}\log(\frac{1}{r}) \rfloor
$; then for any $k_0\in \Z$, 
the sets 
\begin{align*}
g_{k_0}\widetilde{\Delta}'_{r},\ 
g_{k_0+1}\widetilde{\Delta}'_{r},\ \cdots, \ g_{k_0+J}\widetilde{\Delta}'_{r}
\end{align*}
are pairwise disjoint. 
\end{Prop}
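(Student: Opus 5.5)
The plan is to reduce to a single pair of translates and derive a contradiction from the short vector that membership in $\cH(R_r)$ provides, via the key property \eqref{equ:keyproperty}. The flow $g_{-u}$ contracts the first coordinate and expands the remaining coordinates only moderately. So that short vector, pushed forward by a moderate amount of time, becomes a vector of sup norm strictly less than $e^{-r}$. This contradicts $\Delta\le r$ at the later time.

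\emph{Reduction.} Suppose $0\le i<j\le J$ and some $\Lambda$ lies in $g_{k_0+i}\widetilde{\Delta}'_r\cap g_{k_0+j}\widetilde{\Delta}'_r$. Put $k:=j-i\in\{1,\dots,J\}$ and $\Lambda':=g_{-(k_0+i)}\Lambda$, so that $\Lambda'\in\widetilde{\Delta}'_r\cap g_k\widetilde{\Delta}'_r$. By the definition \eqref{def:tildedeltar'} there are $s,s'\in[0,\frac12)$ with
\[
L:=g_s\Lambda'\in\Delta'_r \quad\text{and}\quad g_{s'}g_{-k}\Lambda'\in\Delta'_r .
\]
The second lattice equals $M:=g_{-u}L$ with $u:=k+s-s'$. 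Since $k\ge 1$, we have
\[
\tfrac12<u<k+\tfrac12\le J+\tfrac12\le \tfrac14\log\bigl(\tfrac1r\bigr)+\tfrac12 .
\]

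\emph{Key estimate.} By \eqref{equ:keyproperty} there is $\bm v\in L\cap R_r$, so $\bm w:=g_{-u}\bm v$ is a nonzero vector of $M$. Note that $\Delta(M)\le r$ means every nonzero vector of $M$ has sup norm at least $e^{-r}$. I will show $\|\bm w\|<e^{-r}$, which is the contradiction. The first coordinate sits in the $\bm\alpha$-block, so
\[
|w_1|=e^{-\alpha_1u}|v_1|\le e^{-\alpha_{\min}/2}\bigl(1+\tfrac{r}{2d}\bigr).
\]
The other $\bm\alpha$-coordinates satisfy $|w_i|\le|v_i|<\sqrt r$. The $\bm\beta$-coordinates satisfy
\[
|w_{m+l}|=e^{\beta_l u}|v_{m+l}|<e^{u}\sqrt r\le e^{1/2}r^{1/4}.
\]
Since $r<e^{-4}$, we get $e^{1/2}r^{1/4}<e^{-1/2}<e^{-r}$, and also $\sqrt r<e^{-r}$. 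For the first coordinate, the assumption $r<\frac{d\alpha_{\min}}{2d+1}$ gives
\[
\frac{\alpha_{\min}}2>\frac{(2d+1)r}{2d}=r+\frac{r}{2d}\ge r+\log\bigl(1+\tfrac{r}{2d}\bigr),
\]
hence $|w_1|<e^{-r}$. Therefore $\|\bm w\|<e^{-r}$, contradicting $M\in\Delta^{-1}[0,r]$.

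\emph{Expected obstacle.} The main point is bookkeeping the time shift $u$. It must be bounded below by about $\frac12$, so that the contraction of the first coordinate beats the $r/(2d)$ slack. It must be bounded above by about $\frac14\log(1/r)$, so that the $\sqrt r$-small coordinates stay below $1$ after expansion. This is exactly what the choice of $J_r$ and the thickening by $s\in[0,\frac12)$ arrange. The hypothesis $r<\beta_{\min}/2$ does not seem needed in this direction. If a symmetric argument applying $g_u$ to the $R_r$-vector of $M$ were preferred, that hypothesis might play the analogous role, and I would keep it available in case a borderline case needs it.
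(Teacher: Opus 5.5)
Your proof is correct and is essentially the paper's argument: both take the $R_r$-vector of the earlier lattice, push it forward by $g_{-u}$ for a time $u\in(\tfrac12,J+\tfrac12)$, and show it lands in $(-e^{-r},e^{-r})^d$, which contradicts $\Delta\le r$ for the later lattice. The only difference is cosmetic. The paper phrases the $\beta$-coordinate step as a lower bound on $k$ and uses the hypothesis $r<\beta_{\min}/2$ there, whereas your direct bound via $e^{\beta_l u}\le e^{u}$ (from $\beta_l\le 1$) correctly shows that this hypothesis is not needed.
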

\begin{proof}
For any $0\leq i<j\leq J$, since 
$$
g_{k_0+i}\widetilde{\Delta}_r'\cap g_{k_0+j}\widetilde{\Delta}_r'=g_{k_0+i}(\widetilde{\Delta}'_r\cap g_{j-i}\widetilde{\Delta}_r'),
$$
it suffices to show
\begin{align}
\widetilde{\Delta}'_r\cap g_{k}\widetilde{\Delta}_r'=\emptyset,\qquad \forall \,1\leq k\leq J. 
\end{align}
In other words, taking any $1\leq k\leq J$ and $\Lambda\in \widetilde{\Delta}'_{r}$, we want to show $g_{-k}\Lambda\notin \widetilde{\Delta}'_{r}$. Suppose not, i.e. $g_{-k}\Lambda\in \widetilde{\Delta}'_{r}$. Then by definition, there exists some
$s\in [0,1/2)$ such that 
$$
g_{-(k-s)}\Lambda\in \Delta_{r}'=\Delta^{-1}[0,r]\cap \cH(R_{r})\subset  \Delta^{-1}[0, r]. 
$$
This means that 
$$
g_{-(k-s)}\Lambda\cap S_{r}=\{\bm{0}\}\quad \Leftrightarrow\quad \Lambda\cap g_{k-s}S_{r}=\{\bm{0}\},
$$
where $S_r:=(-e^{-r}, e^{-r})^d$.

On the other hand, note that $\Lambda\in \widetilde{\Delta}'_{r}$ means that there exists some $s'\in [0,1/2)$ such that $\Lambda':=g_{s'}\Lambda\in \Delta_{r}'$. This implies that $\Lambda'$ contains a point $\bm{w}\in R_{r}=(1-\frac{r}{2d}, 1+\frac{r}{2d})\times (-\sqrt{r}, \sqrt{r})^{d-1}$. 
Since $\Lambda\cap g_{k-s}S_{r}=\{\bm{0}\}$, we have $\Lambda'\cap g_{k+s'-s}S_{r}=\{\bm{0}\}$. This means that
\begin{align*}
\forall\, \bm{v}\in \Lambda'\setminus\{\bm{0}\}\ :\ |v_i|<e^{\alpha_i(k+s'-s)-r},\ \forall\ 1\leq i\leq m\ \Rightarrow \ \exists\, 1\leq j\leq n\ \text{s.t.}\ |v_{j+m}|\geq e^{-\beta_j(k+s'-s)-r}.
\end{align*}
Since $r<\frac{d\alpha_{\min}}{2d+1}$, we have %
$$
e^{\alpha_i(k+s'-s)-r}>e^{\frac12\alpha_{\min}-r}>1+\frac{r}{2d}>|w_i|,\qquad \forall\, 1\leq i\leq m.
$$ 
Thus 
there must exist some $1\leq j\leq n$ such that $|w_{j+m}|\geq e^{-\beta_j(k+s'-s)-r}$. But since $w_{j+m}\in (-\sqrt{r}, \sqrt{r})$, we have for such $w_{j+m}$,
\begin{align*}
\sqrt{r}>|w_{j+m}|\geq e^{-\beta_j(k+s'-s)-r}\quad\Rightarrow\quad  k>\frac{1}{2\beta_j}\log\Bigl(\tfrac{1}{r}\Bigr)-\frac{r}{\beta_j}+s-s'>\frac{1}{2\beta_j}\log\Bigl(\tfrac{1}{r}\Bigr)-1.
\end{align*}
(For the last inequality we used the assumption that $r<\beta_{\min} /2$.)
The above lower bound on $k$ contradicts our assumption that $k\leq J=\lfloor\frac{1}{4}\log(\frac{1}{r})\rfloor$, since $r<e^{-4}$.
This finishes the proof of the proposition.
\end{proof}

\subsubsection{Applications of effective equidistribution and double mixing theorems}\label{appleedmtSEC}
In this section we derive various estimates via the effective equidistribution and doubly mixing results in Theorem \ref{thm:effequhorsp}. 
Let the function $r$ be as in Proposition \ref{keyProp}; thus $r$ satisfies \eqref{equ:qusidecr} and \eqref{equ:rdiver}. 
In view of Lemma \ref{lem:redu}, we may also assume that $r$ satisfies \eqref{equ:reduonr}, with parameters $\gamma_d, \gamma_d'$ as in \eqref{equ:gamdassup}. 
Let $\delta>0$ and $\ell\in \N$ be as in Theorem \ref{thm:effequhorsp}. 
Now let us also fix an arbitrary number $0<a<1$.
Our final goal will be to prove that $\mathrm{Leb}(\mathbf{NDI}_{a}(r))=1$
(as in the statement of Proposition \ref{keyProp}).

In the following, the implicit constant in any ``big-$O$'', ``$\ll$'' or ``$\asymp$'' estimate
will be allowed to depend on $d,\bm{\alpha},\bm{\beta},\gamma_d,\gamma_d',\delta,\ell$ and $a$,
without any explicit mention.

For any integer $k\geq s_0-1$, set
\begin{align}\label{rhokDEF}
\rho_k:=\tfrac12 a %
\, r(k+1).
\end{align} 
Recall that $r(k)\to0$ as $k\to+\infty$,
by \eqref{equ:reduonr};
hence for all sufficiently large $k$ we have
\begin{align}\label{rhokbound}
0<\rho_k<\min\Bigl\{\frac1{16d^2},\frac{d\alpha_{\min}}{4d+2}, \frac{\beta_{\min}}4,\frac12 e^{-4}\Bigr\}.
\end{align}
For each such $k$, let $\phi_{\rho_k}\in C_c^{\infty}(X_d)$ be an approximating 
function as in Lemma \ref{lem:smapp}. Then:
\begin{align}\label{EQU:ulbod}
\chi_{\widetilde{\Delta}'_{\rho_k}}\leq \phi_{\rho_k}\leq  \chi_{\widetilde{\Delta}'_{2\rho_k}},
\end{align}
and
\begin{align}\label{EQU:normest}
\|\phi_{\rho_k}\|_{C^{\ell}}\ll \rho_k^{-\ell}.
\end{align}
Note also that \eqref{EQU:ulbod} and Proposition \ref{prop:meaest} imply that for all 
sufficiently large $k$,
\begin{align}\label{EQU:meaestsa}
\text{each of $\:\mu_d(\widetilde{\Delta}'_{\rho_k}),\:$
$\mu_d(\phi_{\rho_k}),\:$
$\mu_d(\widetilde{\Delta}'_{2\rho_k})\:$
is }
\:\asymp \rho_k^{\varkappa_d}\log^{\lambda_d}\Bigl(\tfrac{1}{\rho_k}\Bigr). 
\end{align}

Let us fix a constant $K$ so large that 
\eqref{rhokbound} holds for every $k\geq K$,
and also $\phi_{\rho_k}$ is defined and satisfies 
\eqref{EQU:ulbod}, \eqref{EQU:normest} and \eqref{EQU:meaestsa}.\footnote{We will impose some further
requirements on $K$ below; however it will be noted that
the constant $K$ can be chosen in a way which only depends on
$d,\bm{\alpha},\bm{\beta},\gamma_d,\gamma_d',\delta,\ell$ and $a$.}
Now for every $k\geq K$,
let us define $h_k\in C_c^{\infty}(X_d)$
and $b_k>0$  through
\begin{align}\label{EQU:bkdef}
h_k(x):=\phi_{\rho_k}(g_kx)   %
\quad\text{and}\quad
b_k:=\int_{\scrY}h_k(\Lambda_A)\,\text{d}A=\int_{\scrY}\phi_{\rho_k}(g_k\Lambda_A)\,\text{d}A.
\end{align}
Also for any $j>i\geq K$, define
\begin{align}\label{EQU:bijdef}
b_{i,j}:=\int_{\scrY}\left(h_i(\Lambda_A)h_j(\Lambda_A)-b_ib_j\right)\,\text{d}A.
\end{align}

We will now deduce
various estimates on $b_{i,j}$ using the double mixing 
Theorem \ref{thm:effequhorsp}. 
First, we have by the effective equidistribution result \eqref{equ:effequ} 
together with \eqref{EQU:normest}:
\begin{align}\label{EQU:eqequilap}
b_k=\mu_d(\phi_{\rho_k})+O(e^{-\delta k}\rho_k^{-\ell})
=\mu_d(\phi_{\rho_k})+O(e^{-\frac{\delta}{2} k}\rho_k^{\varkappa_d}),
\qquad\forall k\geq K,
\end{align}
where the last relation holds since
$\rho_k\gg k^{-\gamma_d}$ by \eqref{equ:reduonr}.
Note that because of \eqref{EQU:meaestsa},
the error term in the estimate \eqref{EQU:eqequilap}
is of smaller order of magnitude than the main
term, $\mu_d(\phi_{\rho_k})$,
as $k\to\infty$.
Hence, after possibly increasing the constant $K$, we conclude that
\begin{align}\label{bkasymp}
b_k\asymp\mu_d(\phi_{\rho_k})\asymp\rho_k^{\varkappa_d}\log^{\lambda_d}\Bigl(\tfrac{1}{\rho_k}\Bigr)
\gg \rho_k^{\varkappa_d},
\qquad\forall k\geq K.
\end{align}

Next, for any integers $j>i\geq K$,
it follows from \eqref{equ:effmixing} and \eqref{EQU:normest} 
that 
\begin{align}\label{EQU:effmixingrep1}
\int_{\scrY}h_i(\Lambda_A)h_j(\Lambda_A)\,\text{d}A
&=\mu_d(\phi_{\rho_i})\mu_d(\phi_{\rho_j})+O(e^{-\delta D_{i,j}}\rho_i^{-\ell}\rho_j^{-\ell}),
\end{align}
where
\begin{align*}
D_{i,j}:=\min\{i, j-i\}.
\end{align*}
Now let us set
\begin{align}\label{Cdef}
{\Theta}:=4\delta^{-1}\gamma_d(\varkappa_d+\ell).
\end{align}
Using $\rho_j\gg j^{-\gamma_d}$
and $\rho_i\gg i^{-\gamma_d}>j^{-\gamma_d}$
(which hold by \eqref{equ:reduonr}),
it follows that
$e^{-(\delta/4) {\Theta}\log j}\rho_j^{-\ell}\ll \rho_j^{\varkappa_d}$
and
$e^{-(\delta/4) {\Theta}\log j}\rho_i^{-\ell}\ll \rho_i^{\varkappa_d}$.
Hence if $D_{i,j}\geq {\Theta}\log j$, then
from \eqref{EQU:effmixingrep1} and \eqref{EQU:eqequilap} we deduce that
\begin{align}\label{EQU:effmixingcons1}
\int_{\scrY}h_i(\Lambda_A)h_j(\Lambda_A)\,\text{d}A
&=\bigl(b_i+O(e^{-\frac{\delta}2i}\rho_i^{\varkappa_d})\bigr)\bigl(b_j+O(e^{-\frac{\delta}2j}\rho_j^{\varkappa_d})\bigr)
+O\bigl(e^{-\frac{\delta}2 D_{i,j}}\rho_i^{\varkappa_d}\rho_j^{\varkappa_d}\bigr).
\end{align}
Using also \eqref{bkasymp},
it follows that 
the right hand side of \eqref{EQU:effmixingcons1}
equals $b_ib_j+O\bigl(e^{-\frac{\delta}2 D_{i,j}}\,b_ib_j\bigr)$.
Hence, recalling \eqref{EQU:bijdef},
we have proved that for any $j>i\geq K$,
\begin{align}\label{EQU:dobmixest}
D_{i,j}\geq {\Theta}\log j
\quad\Rightarrow\quad
|b_{i,j}|\ll e^{-\frac{\delta}2 D_{i,j}}\,b_ib_j
\ll e^{-\frac{\delta}2 D_{i,j}}\,b_j.
\end{align}

Next we treat the case when 
$j>i\geq K$ and $D_{i,j}\leq {\Theta}\log j$.
Let us first also assume that $D_{i,j}=i$.
Then
\begin{align}\label{jminusilg}
j-i=j-D_{i,j}\geq j-{\Theta}\log j>\frac{3}{4}j,
\end{align}
where the last inequality holds after possibly increasing the constant $K$ further.
Applying now \eqref{equ:dmix2} together with \eqref{EQU:normest},
we get:
\begin{align}\label{doublemixingest1}
\int_{\scrY}h_i(\Lambda_A)h_j(\Lambda_A)\,\text{d}A
&=\mu_d(\phi_{\rho_i})\mu_d(\phi_{\rho_j})+O\Bigl(e^{-\delta (j-i)}\rho_i^{-\ell}\rho_j^{-\ell}
+\rho_i^{-\ell} \mu_d(\phi_{\rho_j}) e^{-\delta i}\Bigr).
\end{align}
Again using $\rho_j,\rho_i\gg j^{-\gamma_d}$, and \eqref{jminusilg}, we have
\begin{align*}
e^{-\delta (j-i)}\rho_i^{-\ell}\rho_j^{-\ell}
<e^{-\frac{\delta}2j}\bigl(e^{-\frac{\delta}8j}\rho_i^{-\ell}\bigr)\bigl(e^{-\frac{\delta}8j}\rho_j^{-\ell}\bigr)
\ll e^{-\frac{\delta}2j}\rho_j^{\varkappa_d}.
\end{align*}
Also $e^{-\delta i}\rho_i^{-\ell}\ll e^{-\frac{\delta}2i}$.
Using these bounds 
together with \eqref{EQU:eqequilap} and \eqref{bkasymp},
it follows that the expression in \eqref{doublemixingest1} is
\begin{align*}
&=\bigl(b_i+O(e^{-\frac{\delta}2i}\rho_i^{\varkappa_d})\bigr)\bigl(b_j+O(e^{-\frac{\delta}2j}\rho_j^{\varkappa_d})\bigr)
+O\bigl(e^{-\frac{\delta}2j}\rho_j^{\varkappa_d}+e^{-\frac{\delta}2i}b_j\bigr)
=b_ib_j+O\bigl(e^{-\frac{\delta}2i}b_j\bigr).
\end{align*}
Hence we conclude that for any $j>i\geq K$,
\begin{align}\label{EQU:smix1}
D_{i,j}=i\leq {\Theta}\log j\quad\Rightarrow\quad
|b_{i,j}|\ll e^{-\frac{\delta}2 i}b_j.
\end{align}

It remains to consider the case when $j>i\geq K$ and $D_{i,j}=j-i\leq {\Theta}\log j$.
In this case, let us define the function $\Phi_{i,j}\in C_c^{\infty}(X_d)$ by
\begin{align*}
\Phi_{i,j}(x):=\phi_{\rho_i}(x)\phi_{\rho_j}(g_{j-i}x).
\end{align*}
We then have
\begin{align*}
\int_{\scrY}h_i(\Lambda_A)h_j(\Lambda_A)\,\text{d}A&=\int_{\scrY}\phi_{\rho_i}(g_i\Lambda_A)\phi_{\rho_j}(g_{i}g_{j-i}\Lambda_A)\,\text{d}A\\
&=\int_{\scrY}\Phi_{i,j}(g_{i}\Lambda_A)\,\text{d}A=\mu_d(\Phi_{i,j})+O\bigl(\|\Phi_{i,j}\|_{C^{\ell}}\,e^{-\delta i}\bigr),
\end{align*}
where in the last equality we applied \eqref{equ:effequ}.
Using Lemma \ref{lem:sobonorm}, Remark \ref{rmk:operanorm}
and \eqref{EQU:normest},
we conclude that
\begin{align*}
\|\Phi_{i,j}\|_{C^{\ell}}
\ll e^{2\ell(j-i)}\|\phi_{\rho_i}\|_{C^{\ell}}\|\phi_{\rho_j}\|_{C^{\ell}}
\ll e^{2\ell(j-i)}\rho_i^{-\ell}\rho_j^{-\ell}
\ll e^{2\ell {\Theta}\log j}j^{2\ell\gamma_d}
=j^{2\ell ({\Theta}+\gamma_d)}.
\end{align*}
Furthermore, using $i=j-(j-i)\geq j-{\Theta}\log j> \frac{3}{4}j$
and
$\rho_j\gg j^{-\gamma_d}$ and \eqref{bkasymp},
we have
\begin{align*}
e^{-\delta i}j^{2\ell({\Theta}+\gamma_d)}< e^{-\frac{3\delta}{4}j}j^{2\ell(\gamma_d+{\Theta})}
\ll e^{-\frac{\delta}2j}\rho_j^{\varkappa_d}
\ll e^{-\frac{\delta}2j}b_j.
\end{align*}
Hence we conclude that for any $j>i\geq K$,
\begin{align}\label{EQU:preshoest}
D_{i,j}=j-i\leq {\Theta}\log j\quad\Rightarrow\quad
b_{i,j}\leq \int_{\scrY}h_i(\Lambda_A)h_j(\Lambda_A)\,\text{d}A
=\mu_d(\Phi_{i,j})+O\bigl(e^{-\frac{\delta}{2}j}b_j\bigr).
\end{align}

\subsubsection{Proof of Proposition \ref{keyProp}}
We now give the proof of Proposition \ref{keyProp}. 
We continue working in the setup of Section \ref{appleedmtSEC}.
First note that by \eqref{rhokDEF} and \eqref{bkasymp}, 
the divergence assumption \eqref{equ:rdiver} is equivalent with
\begin{align}\label{EQU:bkdive}
\sum_{k\geq K} b_k=\infty. 
\end{align}

For any $k_2> k_1\geq K$, set
\begin{align*}
Q_{k_1,k_2}&:=\int_{\mathcal{Y}}\left(\sum_{i=k_1}^{k_2}h_i(\Lambda_A)-\sum_{i=k_1}^{k_2}b_i\right)^2\,\text{d} A.
\end{align*}
By a divergence Borel-Cantelli lemma (cf. \cite[Lemma 7.2]{KleinbockStrombergssonYu2022}), we know that if
\begin{align}\label{EQU:bc}
\exists\, k_1\geq K\ \ \text{such that}\ \ \liminf_{k_2\to\infty}\frac{Q_{k_1,k_2}}{\left(\sum_{i=k_1}^{k_2}b_i\right)^2}=0,
\end{align}
then for $\mathrm{Leb}$-a.e.\ $A\in \mathrm{M}_{m,n}(\R/\Z)$, $h_k(\Lambda_A)>0$ for infinitely many $k\geq K$.
Using $h_k(\Lambda_A)=\phi_{\rho_k}(g_k\Lambda_A)
\leq \chi_{\widetilde{\Delta}'_{2\rho_k}}(g_k\Lambda_A)$
(see \eqref{EQU:bkdef} and \eqref{EQU:ulbod}),
this implies that
$$
\text{for $\mathrm{Leb}$-a.e. $A\in \mathrm{M}_{m,n}(\R/\Z)$,}\quad g_k\Lambda_A\in  \widetilde{\Delta}'_{2\rho_k}
\:
\text{ holds for infinitely many $k\geq K$}.
$$ 
In other words
(using $2\rho_k=a\, r(k+1)$   %
and the definition \eqref{DEF:ndir}),
we have $\mathrm{Leb}(\mathbf{NDI}_a(r))=1$,
meaning that Proposition \ref{keyProp} is proved.

We thus only need to prove \eqref{EQU:bc}. 
In fact, since $\sum_k b_k=\infty$, it suffices to prove the following stronger estimate: 
\begin{align}\label{EQU:keyest}
Q_{k_1,k_2}\ll \sum_{i=k_1}^{k_2}b_k,\qquad  \forall\, k_2> k_1\geq K.
\end{align}
We now proceed to prove \eqref{EQU:keyest}. 
By expanding the square in the definition of $Q_{k_1,k_2}$
and using the definitions of $b_i$ and $b_{i,j}$
(see \eqref{EQU:bkdef} and \eqref{EQU:bijdef}),
we have
\begin{align}\label{Qk1k2bound1}
Q_{k_1,k_2}&=\sum_{k_1\leq i,j\leq k_2}\int_{\cY}(h_i(\Lambda_A)h_j(\Lambda_A)-b_ib_j)\, \mathrm{d}A\leq \sum_{i=k_1}^{k_2}b_i
+2\sum_{k_1\leq i<j\leq k_2}b_{i,j}.
\end{align}
Next we treat the double sum over $b_{i,j}$ by
using the
three bounds
\eqref{EQU:dobmixest},
\eqref{EQU:smix1}
and \eqref{EQU:preshoest}
(and recalling that $k_1\geq K$).
This gives:
\begin{align}\notag
\sum_{k_1\leq i<j\leq k_2}b_{i,j}
\leq
O\biggl(\sum_{k_1<j\leq k_2}b_j\sum_{\substack{k_1\leq i<j\\ (D_{i,j}\geq {\Theta}\log j)}}e^{-\frac{\delta}2 D_{i,j}}\biggr)
+O\biggl(\sum_{k_1<j\leq k_2}b_j\sum_{k_1\leq i<{\Theta}\log j}e^{-\frac{\delta}2i}\biggr)
\\\label{sumbijbound1}
+\sum_{k_1<j\leq k_2}\hspace{1pt}\sum_{a_j\leq i<j}\Bigl(\mu_d(\Phi_{i,j})+O\bigl(e^{-\frac{\delta}2 j}\,b_j\bigr)\Bigr),
\end{align}
where $a_j:=\max\{k_1,j-{\Theta}\log j\}$.
Here the three sums
$\sum_{k_1\leq i<j} e^{-\frac{\delta}2 D_{i,j}}$,
$\sum_{k_1\leq i<{\Theta}\log j} e^{-\frac{\delta}2i}$
and $\sum_{a_j\leq i<j} e^{-\frac{\delta}2j}$
are all
$\leq 2\sum_{k=1}^{\infty}e^{-\frac{\delta}{2}k}\ll1$. 
Hence we conclude:
\begin{align}\label{sumbijbound2}
\sum_{k_1\leq i<j\leq k_2}b_{i,j}
\leq O\biggl(\sum_{k_1<j\leq k_2}b_j\biggr)
+\sum_{k_1<j\leq k_2}\hspace{1pt}\sum_{a_j\leq i<j}\mu_d(\Phi_{i,j}).
\end{align}

It remains to bound the double sum in \eqref{sumbijbound2}.
For each integer $j$ in the interval $k_1<j\leq k_2$,
let us choose an index $i_j\in \Z\cap[a_j,j)$ so that
\begin{align}
\rho_{i_j}=\max\{\rho_i\col a_j\leq i<j\}.
\end{align}
Recall that for any $k_1\leq i<j$ we have
$\Phi_{i,j}(x)=\phi_{\rho_i}(x)\phi_{\rho_j}(g_{j-i}x)$ and $0\leq \phi_{\rho_i}\leq \chi_{\widetilde{\Delta}'_{2\rho_i}}$.
Hence for each $k_1<j\leq k_2$,
\begin{align}\label{Phiijsum1}
\sum_{a_j\leq i<j}\mu_d(\Phi_{i,j})
\leq \sum_{a_j\leq i<j}\mu_d\bigl(g_{j-i}\widetilde{\Delta}'_{2\rho_i}\cap \widetilde{\Delta}'_{2\rho_j}\bigr)
\leq \sum_{k=1}^{j-a_j} %
\mu_d\bigl(g_k\widetilde{\Delta}'_{2\rho_{i_j}}\cap \widetilde{\Delta}'_{2\rho_j}\bigr),
\end{align}
where the last inequality holds since 
$\widetilde{\Delta}'_{2\rho_i}\subset\widetilde{\Delta}'_{2\rho_{i_j}}$ for all $i$ in the interval $a_j\leq i<j$.

For each $k_1<j\leq k_2$, let us now also set
\begin{align*}
J_j:=\Bigl\lfloor\tfrac{1}{4}\log\Bigl(\tfrac{1}{2\rho_{i_j}}\Bigr)\Bigr\rfloor
\:\in\mathbb{N}.
\end{align*}
Then by Proposition \ref{prop:shortmix}
(which applies since $i_j\geq k_1\geq K$ so that $\rho_{i_j}$
satisfies the bound in \eqref{rhokbound}),
for any $k\in \N$, the sets 
\begin{align*}
g_{k}\widetilde{\Delta}'_{2\rho_{i_j}}, \ 
g_{k+1}\widetilde{\Delta}'_{2\rho_{i_j}},\ \cdots,\  g_{k+J_j}\widetilde{\Delta}'_{2\rho_{i_j}}
\end{align*}
are pairwise disjoint. 
This implies that the sets
\begin{align*}
g_{k}\widetilde{\Delta}'_{2\rho_{i_j}}\cap \widetilde{\Delta}'_{2\rho_j}, \ 
g_{k+1}\widetilde{\Delta}'_{2\rho_{i_j}}\cap \widetilde{\Delta}'_{2\rho_j},\ \cdots,\  g_{k+J_j}\widetilde{\Delta}'_{2\rho_{i_j}}\cap \widetilde{\Delta}'_{2\rho_j}
\end{align*}
are also pairwise disjoint. 
Since these sets are all contained in $\widetilde{\Delta}'_{2\rho_j}$, 
the sum of their $\mu_d$-measure is bounded from above by $\mu_d(\widetilde\Delta'_{2\rho_j})$.
Hence for each $k_1<j\leq k_2$: 
\begin{align}\label{mudintsumbound1}
\sum_{k=1}^{j-a_j} %
\mu_d\bigl(g_k\widetilde{\Delta}'_{2\rho_{i_j}}\cap \widetilde{\Delta}'_{2\rho_j}\bigr)
\leq\Bigl(\tfrac{j-a_j}{J_j+1}+1\Bigr)\mu_d(\tilde\Delta'_{2\rho_j})
\ll\Bigl(\tfrac{\log j}{J_j}+1\Bigr)b_j,
\end{align}
where for the last bound we used
$j-a_j\leq {\Theta}\log j$ and \eqref{EQU:meaestsa}, \eqref{bkasymp}.
But now note that $i_j\geq a_j\geq j-{\Theta}\log j>\frac34j$;
hence by \eqref{equ:reduonr} and \eqref{rhokDEF},
$\rho_{i_j}\ll j^{-\gamma_d'}$.
This implies that $J_j\geq \frac14\gamma_d'\log(j)-O(1)$,
and hence after possibly increasing the constant $K$ further, %
we have $J_j\gg\log j$ for all $j\geq K$.
Using this fact in \eqref{mudintsumbound1}, we obtain:
\begin{align}
\sum_{k=1}^{j-a_j}%
\mu_d\bigl(g_k\widetilde{\Delta}'_{2\rho_{i_j}}\cap \widetilde{\Delta}'_{2\rho_j}\bigr)
\ll b_j,\qquad \forall j\in(k_1,k_2]\cap\Z.
\end{align}
Combining this bound with
\eqref{Phiijsum1}, \eqref{sumbijbound2}
and \eqref{Qk1k2bound1},
we conclude that $Q_{k_1,k_2}\leq O\bigl(\sum_{j=k_1}^{k_2}b_j\bigr)$.
Since $Q_{k_1,k_2}$ is non-negative by definition,
this means that we have proved the bound \eqref{EQU:keyest}.
This completes the proof of Proposition \ref{keyProp}.

\bibliographystyle{alpha}
\bibliography{DKbibliog}

\end{document}